%% file: ascending_nick_2.tex
\documentclass[11pt]{article}

\usepackage[T1]{fontenc}
\usepackage[utf8]{inputenc}

\usepackage[margin=1in]{geometry}
\usepackage{latexsym}
\usepackage{amssymb,amsmath,amsthm,amsfonts}
\usepackage{mathtools}
\usepackage{mathrsfs}
\usepackage{bm}
\usepackage{mathdots}

\usepackage{graphicx}
\usepackage{epsfig}
\usepackage{psfig}

\usepackage{xcolor}
\usepackage{comment}
\usepackage{algorithmic}

\usepackage{tikz}
\usepackage{tikz-cd}
\usetikzlibrary{calc}
\usepackage[font=small]{caption}
\usepackage[colorlinks=true]{hyperref}

\numberwithin{equation}{section}

\usepackage{newtxtext,newtxmath}

\newtheorem{proposition}{Proposition}[section]

\newtheorem{theorem}{Theorem}[section]

\newtheorem{corollary}{Corollary}[section]

\newcommand{\mcL}{\mathcal{L}}
\newcommand{\mcS}{\mathcal{S}}
\newcommand{\mcZ}{\mathcal{Z}}
\newcommand{\mcC}{\mathcal{C}}
\newcommand{\mcE}{\mathcal{E}}

\title{Ascending Convex Polyominoes}

\author{
Nicholas Beaton\\
\small The University of Melbourne, Australia\\
\small \texttt{nrbeaton@unimelb.edu.au}
\and
Simone Rinaldi\\
\small University of Siena, Italy\\
\small \texttt{rinaldi@unisi.it}
}

\date{}

\begin{document}

\maketitle

\begin{abstract}
Convex polyominoes can be refined according to the number of direction
changes in monotone paths connecting pairs of cells, leading to the
notion of $k$-convexity. In particular, the cases $k=1$ and $k=2$
correspond to $L$-convex and $Z$-convex polyominoes, two well-studied
subclasses of convex polyominoes, with intermediate families such as
centered and $4$-stack polyominoes.
These families exhibit remarkably different combinatorial behaviours, 
suggesting that geometric constraints have a strong impact on the 
nature of the generating function: $L$-convex and centered polyominoes possess rational 
generating functions and growth of order $(2+\sqrt{2})^n$ and $4^n$, respectively, while $Z$-convex, 4-stack, 
and convex polyominoes have algebraic functions and asymptotics 
of order $n4^n$, $\sqrt{n}\,4^n$, and $n4^n$ respectively.

In this paper we investigate the structure of $Z$-convex polyominoes
by introducing a refinement based on the NW- and NE-convexity degrees,
which yields a decomposition into three disjoint subclasses
$\mcC(1,2)$, $\mcC(2,1)$, and $\mcC(2,2)$. To enumerate these families
we introduce ascending polyominoes, admitting a simple geometric
characterization, and construct a generating tree that leads to
functional equations for the corresponding generating functions.
By solving these equations we obtain explicit algebraic generating
functions and the asymptotic growth for all the subclasses.
\end{abstract}

\section{Introduction}

We assume the reader is confident with the concept of a {\em
polyomino}, and with the most important classes of polyominoes,
such as the {\em stack}, the {\em column/row convex}, the {\em directed-convex}, and the {\em convex} polyominoes. For the main
definitions concerning these objects we refer to~\cite{mbm2,convex}.

The notion of $k$-convexity was introduced in~\cite{lconv} as a refinement
of convex polyominoes based on the number of direction changes in
monotone paths connecting pairs of cells. 
A convex polyomino is said to be {\itshape $k$-convex} if any two of its
cells can be joined by an internal monotone path with at most $k$
changes of direction.
The case $k=1$ corresponds to $L$-convex polyominoes
(see Figure~\ref{sample}~(b)) \cite{fpsac,enum,lconv}, while $k=2$ yields the class of
$Z$-convex  polyominoes
(see Figure~\ref{sample}~(d)) \cite{rin}.

A convex polyomino is said to be {\em horizontally centered}  (briefly, {\em centered}), if it contains at least one row touching both the
left and right sides  of its minimal bounding rectangle (see Figure~\ref{sample}~(c)). Finally, a polyomino is called a {\em 4-stack} if it can be decomposed into a central rectangle, referred to as the {\em supporting rectangle}, with four stack polyominoes placed on each side of the rectangle, as graphically shown in Figure~\ref{sample}~(e). 
\begin{figure}[htb]
\begin{center}
\includegraphics[width=150mm]{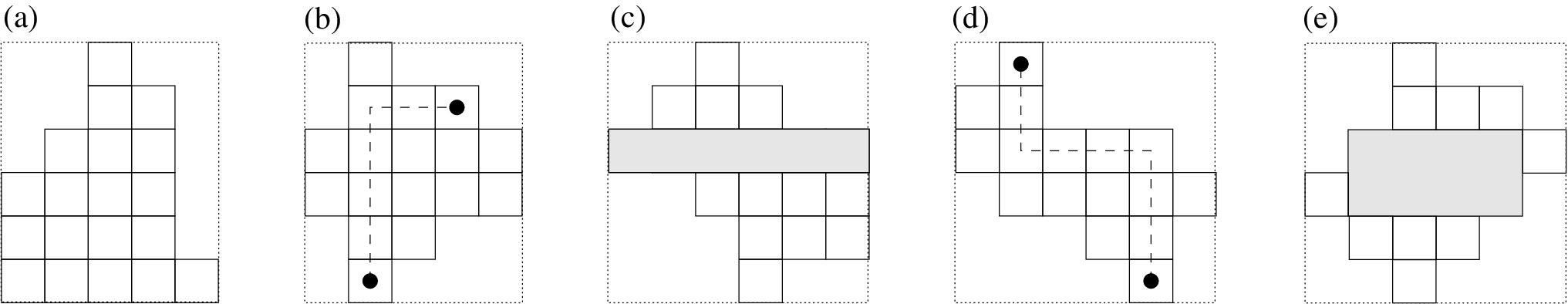} \caption{(a) stack polyomino; (b) $L$-convex polyomino; (c)  centered polyomino; (d) $Z$-convex polyomino; (e) 4-stack polyomino.}\label{sample}
\end{center}
\end{figure}

Let $\mcL_n$ (resp., $\mcE_n$, $\mcZ_n$, $\mcS_n$) be the class of $L$-convex (resp., centered convex, $Z$-convex, 4-stack) polyominoes of semi-perimeter (size) $n$.
Clearly, $\mcL_n \subset \mcE_n \subset \mcS_n \subset \mcZ_n$.  Figure~\ref{sample}~(c) shows a  centered convex polyomino which is not $L$-convex, 
Figure~\ref{sample}~(e) shows a 4-stack polyomino which is not  centered, while Figure~\ref{sample}~(d) depicts a $Z$-convex polyomino which is not 4-stack.

Here are the generating functions of the aforementioned classes.
\begin{enumerate}
 \item The generating function of $L$-convex polyominoes according to the size is \cite{fpsac,enum}:
\begin{equation}
L(t)=\frac{t^{2} \left(t^{2}-2 t+1\right)}{2 t^{2}-4 t+1} \, ,
\end{equation}
and hence the number of $L$-convex polyominoes grows asymptotically as $\ell(n) \sim \left ( \frac{\sqrt{2}-1}{8} \right ) \left( 2+\sqrt{2} \right) ^n.$
 \item The generating function of centered polyominoes according to the size is \cite{rin}:
\begin{equation}
E(t)=\frac{t^2(1-t)(1-3t)}{(1-2t)(1-4t)} \, ,
\end{equation}
and hence the asymptotic growth of centered polyominoes is $e(n) \sim \frac{3}{128}4 ^n.$
\item The generating function of $Z$-convex polyominoes with
respect to the size is  \cite{rin}:
\begin{equation}
Z(t)=\frac{2t^4(1-2t)^2d(t)}
{(1-4t)^2(1-3t)(1-t)}+\frac{t^2(1-6t+10t^2-2t^3-t^4)}
{(1-4t)(1-3t)(1-t)},
\end{equation}
where $
d(t)=\frac12 \left( 1-2t-\sqrt{1-4t} \right)
$
is the generating function of Catalan numbers. Hence, the number of $Z$-convex polyominoes having size $n$ grows asymptotically as $z(n) \sim \frac{n}{384} 4^n$.
\item The generating function of 4-stack polyominoes according to the size is algebraic \cite{4stack}, and precisely it is equal to:
\begin{equation} 
S(t)=\frac{t^2(1-3t)^2}{(1-4t)^{\frac{3}{2}}(1-2t)} \,.
\end{equation}
Then, 4-stack polyominoes have an asymptotic growth of $\sqrt{n}4^n$, which is just between the growth of centered polyominoes and $Z$-convex polyominoes.

\item The generating function of convex polyominoes with
respect to the size is \cite{DV}:
\[
C(t)=\frac{t^2(1-6t+11t^{2}-4t^{3})}{(1-4t)^{2}}
-\frac{4t^{4}}{(1-4t)^{3/2}},
\]
hence the number of convex polyominoes of semiperimeter $n$ grows asymptotically like
$
\frac{n}{128}\,4^n.
$

\end{enumerate}

\medskip

We observe that $Z$-convex and convex polyominoes polyominoes have algebraic  generating functions and the same rough asymptotic behavior, $n4^n$. 
On the other hand, $L$-convex and centered polyominoes have a rational generating function and grow like $(2+\sqrt{2})^n$ and $4^n$, respectively. From these observations, we wish to emphasize the distinguished role of $Z$-convex polyominoes within the hierarchy of convex polyominoes classified according to the convexity degree.

Furthermore, it appears rather surprising that there is no method available in the literature to derive the (algebraic) generating function of $Z$-convex polyominoes using the standard approaches to the enumeration of convex polyominoes, such as Temperley-like methods, representations in terms of algebraic languages~\cite{DV}, decomposition strategies~\cite{mbm2}, or recursive enumeration techniques (for instance, generating trees~\cite{gfgt,convex}, successfully applied to the enumeration of $L$-convex polyominoes~\cite{enum}).

\begin{table}
\begin{center}
\begin{tabular}{l|c|c|c}
  {\bf Class of polyominoes} &{\bf Type of g.f.} & \textbf{OEIS entry} & {\bf Asymptotic growth} \\
  \hline
   & & \\
  {\bf $L$-convex} & rational & \href{https://oeis.org/A003480}{A003480} &  $\left ( \frac{\sqrt{2}-1}{8} \right ) \left( 2+\sqrt{2} \right) ^n$ \\
  & & \\
  {\bf  centered} & rational & \href{https://oeis.org/A049775}{A049775} & $\frac{3}{128} \; 4^n$ \\
  & & \\
  {\bf 4-stack} & algebraic & \href{https://oeis.org/A393099}{A393099} &  $\frac{1}{64 \sqrt{\pi}} \; \sqrt{n} \; 4^n$ \\
  & & \\
  {\bf $Z$-convex} & algebraic & \href{https://oeis.org/A128611}{A128611} & $\frac{n}{384}\; 4^n$ \\
  & & \\
  {\bf convex} & algebraic & \href{https://oeis.org/A005436}{A005436} &  $\frac{n}{128}\; 4^n$ \\
\end{tabular} 
\end{center}
\caption{Asymptotic behaviours of the classes between $L$-convex and $Z$-convex polyominoes. The quantities in the last column are for the number of polyominoes of size $n$.}
\label{tab1}
\end{table}

Other families of polyominoes subject to $k$-convexity constraints have been
investigated in the literature. In particular, for $k \geq 1$, exact
expressions for the generating function with respect to the semi-perimeter
have been obtained for $k$-parallelogram polyominoes and directed
$k$-convex polyominoes~\cite{bussi}. The asymptotic behavior of $k$-convex polyominoes was studied in \cite{micheli}.
Enumeration with respect to the area has been carried out for
$L$-convex polyominoes in~\cite{fpsac}, while for $k \geq 2$ the
asymptotic behaviour of $k$-convex polyominoes has been analyzed
in~\cite{guttmann,guttmann2}.

This paper is organized as follows: 

\begin{description}
\item[-] We present a decomposition of $Z$-convex polyominoes into three
mutually disjoint subclasses, $\mcC(1,2)$, $\mcC(2,1)$, and $\mcC(2,2)$,
based on the notion of NW- and NE-convexity degree,
which extends and refines the classical concept of $k$-convexity~\cite{massazza}.
We focus in particular on the class $\mcC(2,2)$, which turns out to be
the most intriguing of the three, as it is the only family of convex
polyominoes for which both the NW- and NE-convexity degrees exceed~one.
We prove that every polyomino in $\mcC(2,2)$ is in fact a 4-stack polyomino.

\item[-] In order to enumerate the three subclasses, we introduce the
families of ascending and descending (convex) polyominoes,
defined as convex polyominoes that do not belong to $\mcC(2,2)$
and admit an ascending (respectively descending) orientation. In practice:
\[ \mbox{Convex} = \mbox{Ascending} + \mbox{Descending} + \mcC(2,2) ,\]
where ascending and descending polyominoes are equinumerous and their intersection gives $L$-convex polyominoes. 
The family of ascending polyominoes admits a simple
geometric characterization.

\item[-] We construct a generating tree~\cite{gfgt,eco,convex}
for the recursive generation of ascending polyominoes,
which leads to a functional equation satisfied by the corresponding
generating function. By means of standard techniques~\cite{gfgt,convex},
we solve the resulting system and determine the algebraic generating
functions of ascending polyominoes, and consequently of the classes
$\mcC(1,2)$, $\mcC(2,1)$, and $\mcC(2,2)$.
We also derive the asymptotic behaviour of all these families.
\end{description}
Enumeration of ascending polyominoes leads to a deeper comprehension of the structure of convex and $Z$-convex polyominoes and to refine their enumeration. In particular, we show that 
ascending polyominoes, as well as the families $\mcC(1,2)$, $\mcC(2,1)$, have an algebraic generating function and asymptotic growth as $n4^n$. On the other side, the family $\mcC(2,2)$ has an algebraic generating function and  asymptotic growth as $\sqrt{n}4^n$ (the same as $4$-stacks).

\section{Basic definitions and properties}\label{sec:definitions}

\paragraph{Convexity degree and $k$-convex polyominoes} 
A path is monotone if it is made of steps in two directions only: North-steps and East-steps (NE-path), North-steps and West-steps (NW-path), South-steps and East-steps (SE-path) or South-steps and West-steps (SW-path). The authors of \cite{lconv} proved that a polyomino is convex if and only if any two of its cells are joined by a monotone path internal to the polyomino. 
Given $k\geq 0$, a convex polyomino is said to be \emph{k-convex} if any two of its cells can be joined by a monotone path, internal to the polyomino, with at most $k$ changes of direction. Obviously, a $k$-convex polyomino is also $h$-convex,  for every $h \geq k$. We define the {\em degree of convexity} of a convex polyomino $P$ as the smallest $k$ such that $P$ is $k$-convex. 

In~\cite{brocchi}, the notion of $k$-convexity was refined as follows. 
Let $P$ be a convex polyomino and let $a,b$ be two cells. 
We write $a \nwarrow b$ (resp., $a \nearrow b$) if there exists a NW-path (resp., NE-path) joining $a$ to $b$. 
Whenever $a \nearrow b$, the \emph{NE-distance} from $a$ to $b$, denoted by $D_\mathrm{NE}(a,b)$, is defined as the minimum number of direction changes among all NE-paths connecting $a$ to $b$.
The \emph{NE-degree} of convexity of $P$ is then:
\[
D_\mathrm{NE}(P)=\max \{ D_\mathrm{NE}(a,b) : a \nearrow b \}.
\]
Equivalently, it is the smallest integer $k$ such that $D_\mathrm{NE}(a,b)\le k$ for all pairs $a \nearrow b$, and equality holds for at least one such pair.
The quantities $D_\mathrm{NW}(a,b)$ and $D_\mathrm{NW}(P)$ are defined analogously. 
Finally, the (global) degree of convexity of $P$ is
$
D(P)=\max\{D_\mathrm{NE}(P),\,D_\mathrm{NW}(P)\}.
$
The authors of~\cite{brocchi} proved that:

\begin{proposition}\label{prop:dnw}
Let $P$ be a convex polyomino such that $D_\mathrm{NW}(P) < D_\mathrm{NE}(P)$. If $ D_\mathrm{NE}(P) > 2$, then $D_\mathrm{NW}(P) =1$.
\end{proposition}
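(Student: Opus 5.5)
I would argue by contrapositive. Assume $D_\mathrm{NW}(P)\ge 2$ and $D_\mathrm{NE}(P)\ge 3$, and derive $D_\mathrm{NW}(P)\ge D_\mathrm{NE}(P)$. This contradicts the hypothesis $D_\mathrm{NW}(P)<D_\mathrm{NE}(P)$. The plan is to extract geometric witnesses from the two degree conditions, then use convexity to show that they force a NW-pair with many direction changes.

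First, I will translate each degree condition into boundary geometry. For a convex polyomino, the minimal number of turns of a NE-path from $a$ to $b$ is computed by a greedy staircase. Go as far east as possible inside $P$, then as far north as possible, and so on (or start north first, whichever is better). Each turn of the greedy path occurs where it hits the upper-left or lower-right boundary staircase of $P$. So $D_\mathrm{NE}(a,b)\ge 3$ means that the NE-sides of the boundary contain an alternating chain of at least three "blocking" steps between $a$ and $b$.

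Similarly, $D_\mathrm{NW}(P)\ge 2$ gives a pair $c\nwarrow d$ whose bounding rectangle is not covered by the union of the row of one cell and the column of the other. In terms of boundary paths, neither the NE nor the SW part of the boundary can be a single step "corner" region: both must have a genuine indentation.

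Second, I will combine these witnesses. A NE-pair with distance $\ge 3$ forces the upper-left and lower-right boundary paths to be interleaved, with at least two reentrant corners on each side in the relevant window. Under convexity, the boundary between the topmost row and the leftmost column is monotone. The existence of a NW-obstruction then forces an indentation in the NE or SW boundary part. From this, the idea is to construct a NW-pair whose greedy path must turn at least $D_\mathrm{NE}(P)$ times. Concretely, I will take $c$ in the lowest rightmost region and $d$ in the highest leftmost region. The NW-path between them then has to negotiate the same alternating chain of blocking steps, reflected, because convexity ties the upper-left staircase to both the NE- and NW-connectivity. This would give $D_\mathrm{NW}(P)\ge D_\mathrm{NE}(P)$, the desired contradiction.

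The main obstacle is this transfer step: showing rigorously that a NE-obstruction of length $k\ge3$, together with a mere NW-obstruction of length $2$, produces a NW-obstruction of length $\ge k$. I would first try a case analysis on where the turns of the optimal NE-path touch the four boundary staircases (NW, NE, SE, SW) of $P$. The key observation to aim for is that three alternating NE-turns require the NW and SE staircases to be "long" enough that any NW indentation can be extended along them. The threshold $k>2$ is presumably exactly what makes this extension possible, since for $k=2$ examples such as $\mathcal C(2,2)$ show that both degrees can equal $2$.
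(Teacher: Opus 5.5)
\textbf{There is a genuine gap.} It sits exactly at the transfer step you flag, and the mechanism you propose for that step cannot work. (The paper does not prove this proposition itself; it quotes it from Brocchi et al., so I judge your plan on its own merits.)

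\emph{Why the transfer fails.} Turns of an NE-path from $a$ to $b$ are forced only by the NW and SE boundary staircases. Indeed, an NE-path with at most two turns exists iff some column in $[x_a,x_b]$ meets both rows $y_a,y_b$, or some row in $[y_a,y_b]$ meets both columns $x_a,x_b$. An NW-path from $c$ to $d$ ($c$ south-east of $d$) is obstructed only by the SW and NE staircases; for instance, $D_\mathrm{NW}(c,d)\ge 2$ iff both corner cells $(x_d,y_c)$ and $(x_c,y_d)$ are missing. So the alternating chain that witnesses $D_\mathrm{NE}\ge 3$ never blocks any NW-path, and there is nothing to ``reflect''.

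More fundamentally, your intermediate target $D_\mathrm{NW}(P)\ge D_\mathrm{NE}(P)\ge 3$ is forbidden by the mirror image of the very statement you are proving, since it would force $D_\mathrm{NE}(P)\le 1$. So no honest construction of a long NW-path exists. The hypotheses $D_\mathrm{NW}\ge 2$ and $D_\mathrm{NE}\ge 3$ are simply inconsistent, and what is missing is a proof of that inconsistency.

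\emph{One way to supply it.} Write row $y$ as $[L(y),R(y)]$ and column $x$ as $[D(x),U(x)]$. By convexity, $L$ and $D$ weakly decrease then weakly increase, while $R$ and $U$ weakly increase then weakly decrease. Let the left and right sides of the bounding box occupy rows $[\ell_1,\ell_2]$ and $[\rho_1,\rho_2]$, and the top and bottom sides occupy columns $[\tau_1,\tau_2]$ and $[\beta_1,\beta_2]$.
\begin{itemize}
\item By the remark above, $D_\mathrm{NE}(a,b)\ge 3$ means $R(y_a)<L(y_b)$ and $U(x_a)<D(x_b)$. By unimodality this gives $y_a<\rho_1$, $y_b>\ell_2$, $x_a<\tau_1$ and $x_b>\beta_2$.
\item The missing corners of an NW-pair $(c,d)$ give $L(y_c)>x_d$, $R(y_d)<x_c$ and $U(x_c)<y_d$. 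Hence $y_c<\ell_1$, $y_d>\rho_2$ and $x_c>\tau_2$.
\item Suppose $y_a\ge y_c$. Then $L(y_b)>R(y_a)\ge R(y_c)\ge x_c>\tau_2$, because $R$ is non-decreasing below $\rho_1$. Since $L$ is non-decreasing above $\ell_2$, the top row would start strictly right of $\tau_2$, which is absurd. Hence $y_a<y_c$.
\item Suppose $x_a\ge x_d$. Then $D(x_b)>U(x_a)\ge U(x_d)\ge y_d>\rho_2$, because $U$ is non-decreasing left of $\tau_1$. Since $D$ is non-decreasing right of $\beta_2$, the rightmost column would start above $\rho_2$, again absurd. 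Hence $x_a<x_d$.
\item Finally, $L$ is non-increasing below $\ell_1$, so $L(y_a)\ge L(y_c)>x_d>x_a$. This contradicts $a\in P$.
\end{itemize}
This proves $D_\mathrm{NE}(P)\ge 3\Rightarrow D_\mathrm{NW}(P)\le 1$; the hypothesis $D_\mathrm{NW}<D_\mathrm{NE}$ is not even needed.

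\emph{A caveat on the statement.} Your contrapositive only rules out $D_\mathrm{NW}\ge 2$, and the case $D_\mathrm{NW}=0$ cannot be ruled out. The staircase of cells $(0,0),(1,0),(1,1),(2,1),(2,2)$ has $D_\mathrm{NE}=3$ and $D_\mathrm{NW}=0$. So the statement is correct only in the form $D_\mathrm{NW}(P)\le 1$.
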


This property describes how the degrees of convexity in the two directions may behave in a generic convex polyomino. In fact, a convex polyomino having degree of convexity greater than $2$ must be 1-convex in one of the two directions. Hence, a convex polyomino may be either $2$-convex in both directions or $1$-convex in one direction and $k$-convex in the other one ($k$ arbitrary). Figure \ref{fig:small}~(a)~(b) shows a $3$-convex polyomino. 

For $x,y\geq 1$, let us denote by $\mcC(x,y)$ the family of convex polyominoes for which the  NE-degree of convexity is $x$ and the NW-degree of convexity is 
$y$. According to Proposition \ref{prop:dnw}, we have that:
\begin{enumerate}
\item $L$-convex polyominoes are the union of $\mcC(1,1)$, $\mcC(1,0)$, $\mcC(0,1)$, and $\mcC(0,0)$;
\item $Z$-convex polyominoes (not $L$-convex) are given by the disjoint union of $\mcC(1,2)$, $\mcC(2,1)$, and $\mcC(2,2)$;
\item With $k >2$, $k$-convex polyominoes (not $(k-1)$-convex) are given by the disjoint union of  $\mcC(1,k) \cup \mcC(k,1)$;
\item Convex polyominoes $\cal C$ are obtained as the union:
$$ {\cal C}= \bigcup_{k\geq 0} \mcC(k,1) \cup \bigcup_{k\geq 0} \mcC(1,k)  \cup \mcC(0,0) \cup \mcC(2,2) \, .$$
\end{enumerate}

The family $\mcC(2,2)$ turns out to be particularly intriguing within the class of $Z$-convex polyominoes, and more generally within the class of convex polyominoes. For this reason, we aim to study and enumerate this subclass.

\begin{proposition}\label{prop:inclusion}
Every polyomino in $\mcC(2,2)$ is a $4$-stack polyomino. 
\end{proposition}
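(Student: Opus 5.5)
Let $P\in\mcC(2,2)$ have minimal bounding rectangle $R$. The goal is to exhibit a central rectangle $S\subseteq P$ such that $P\setminus S$ splits into four stacks, one attached to each side of $S$. For that it suffices that $S$ spans the whole height of $P$ in some set of columns and the whole width of $P$ in some set of rows. Then the parts of $P$ above, below, left and right of $S$ are each bounded by two monotone staircases meeting at the top (resp.\ bottom, left, right) of $R$. By convexity of $P$, these parts are stacks.

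So I plan to prove that $P$ has a row meeting both the left and right sides of $R$, and a column meeting both the top and bottom sides of $R$. Their intersection cell, extended to the maximal rectangle formed by all such full rows and full columns, gives the supporting rectangle $S$.

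I would use the standard boundary description of a convex polyomino. Let the top side of $R$ be touched by $P$ along a segment $[t_1,t_2]$, the bottom along $[b_1,b_2]$, the left along $[l_1,l_2]$ and the right along $[r_1,r_2]$. The boundary then consists of four monotone staircases joining these segments.

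Suppose no column meets both top and bottom, so $[t_1,t_2]$ and $[b_1,b_2]$ are disjoint. By symmetry, say $t_2<b_1$: the top lies strictly to the left of the bottom. I will then show that $D_\mathrm{NE}(P)\le 1$, contradicting $D_\mathrm{NE}(P)=2$. Here is the reasoning. Take cells $a\nearrow b$. Consider the column of $a$ and the row of $b$; in a convex polyomino they are full segments. Because the top is attained only to the left of every bottom cell, the NW and SE boundary staircases are constrained so that any NE-path can be replaced by one with a single turn. Concretely, the cell at (column of $a$, row of $b$) or the cell at (column of $b$, row of $a$) lies in $P$.

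The symmetric case $t_1>b_2$ would similarly force $D_\mathrm{NW}(P)\le 1$. Either way this contradicts $P\in\mcC(2,2)$. The same argument with rows and columns exchanged gives a row spanning from left to right.

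The main obstacle is making the claim ``top strictly left of bottom implies NE-degree at most 1'' rigorous. My plan is as follows. Suppose $a\nearrow b$ and the corner cell $c=(\text{col}(b),\text{row}(a))$ is not in $P$. Then the row of $a$ ends before $\text{col}(b)$, which places the right end of that row on the SE staircase. Using $t_2<b_1$ and the fact that the SE staircase is monotone, I would show that the column of $a$ extends up to $\text{row}(b)$, so the other corner $(\text{col}(a),\text{row}(b))$ lies in $P$. A careful case analysis on where $a$ and $b$ sit relative to $[t_1,t_2]$ and $[b_1,b_2]$ should settle this. If that direct argument gets stuck, I would instead use the characterization from \cite{lconv,brocchi}: NE-degree at most 1 holds iff every pair of cells admits one of the two corner cells. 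I would then check that a violating pair forces overlapping top and bottom projections.

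Finally, I would verify that the four pieces outside $S$ are stacks, which follows directly from convexity and the monotonicity of the boundary staircases.
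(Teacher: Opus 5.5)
Your plan depends on the claim that every $P\in\mcC(2,2)$ has a row touching both the left and right sides of its bounding box and a column touching both top and bottom. This claim is false.

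Take the eight cells $(2,0)$; $(0,1),(1,1),(2,1)$; $(1,2),(2,2),(3,2)$; $(1,3)$. From bottom to top its rows are $\{2\},\{0,1,2\},\{1,2,3\},\{1\}$. From left to right its columns are $\{1\},\{1,2,3\},\{0,1,2\},\{2\}$. So it is convex, with a $4\times 4$ bounding box.
\begin{itemize}
\item For $(0,1)\nearrow(3,2)$, neither corner cell $(3,1)$ nor $(0,2)$ belongs to $P$, so this NE-distance is $2$.
\item For the NW-pair from $(2,0)$ to $(1,3)$, neither $(1,0)$ nor $(2,3)$ belongs to $P$, so this NW-distance is $2$.
\item Every other pair is at distance at most $1$.
\end{itemize}
Hence $P\in\mcC(2,2)$; together with its mirror image it accounts for the term $2t^8$ of $C_{2,2}(t)$. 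Yet no row and no column of $P$ is full.

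Moreover, $P$ falls exactly into your case $t_2=1<2=b_1$, and still $D_\mathrm{NE}(P)=2$. So the lemma ``top strictly left of bottom implies $D_\mathrm{NE}(P)\le 1$'' fails, and so does the corner-cell claim you meant to derive it from. The reason is that the NE-obstruction here comes from the left and right contacts (rows $1$ and $2$), which a hypothesis on $[t_1,t_2]$ and $[b_1,b_2]$ does not control.

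This cannot be repaired. The paper exhibits a non-centered member of $\mcC(2,2)$ in Figure~\ref{fig:small}(d). Also, the number of polyominoes of size $n$ in $\mcC(2,2)$ grows like $\sqrt n\,4^n$, faster than the $\frac{3}{128}4^n$ centered polyominoes. So $\mcC(2,2)$ is not even contained in the horizontally centered class.

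The missing idea is that the supporting rectangle of a 4-stack need not span the full height or width of $P$. The top and bottom stacks may peak in different columns, and the left and right stacks in different rows. In the example above, the rectangle is the central block $\{1,2\}\times\{1,2\}$ with a one-cell stack on each side.

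The paper therefore does not look for full rows or columns. The NE-distance-$2$ pair yields rows $r_\alpha\nearrow r_\beta$. The two distance-$2$ pairs together give the interlacing $L(r_\alpha)<L(r_\beta)\le L(r_\delta)<L(r_\gamma)$ and $R(r_\delta)<R(r_\gamma)\le R(r_\alpha)<R(r_\beta)$. The paper then chooses NE-shifted rows $r_\zeta\nearrow r_\epsilon$ of maximal vertical separation, so that the rows above $r_\epsilon$ and below $r_\zeta$ form stacks. The supporting rectangle has its north-west corner at the left end of $r_\epsilon$ and its south-east corner at the right end of $r_\zeta$.

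Your reduction is also off even where it applies. A single full row already makes a convex polyomino a 4-stack. By contrast, intersecting the full rows with the full columns can leave cells attached to no side of $S$. In the $3\times 3$ square minus two opposite corners, $S$ would be the central cell, and the two remaining corners lie in neither its row nor its column.
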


\begin{proof} 
Let $P \in \mcC(2,2)$ and assume that the south--east corner of the
minimal bounding rectangle of $P$ is placed at $(0,0)$.
Given a row $r$ of $P$, we denote by $L(r)$ (resp.\ $R(r)$) the abscissa
of the left (resp.\ right) endpoint of $r$. Similarly, given a column
$c$, we denote by $U(c)$ (resp.\ $D(c)$) the ordinate of the upper
(resp.\ lower) endpoint of $c$.
Let $r,r'$ be two rows with $r$ below $r'$, and let $c,c'$ be two columns
with $c$ to the left of $c'$. We write $r \nearrow r'$ (resp.\ $r' \nwarrow r$)
if $L(r) < L(r')$ and $R(r) < R(r')$ (resp.\ $L(r') < L(r)$ and $R(r') < R(r)$),
and we write $c \nearrow c'$ if $D(c) < D(c')$ and $U(c) < U(c')$.

Since $P \in \mcC(2,2)$, there exist two pairs of cells $(\alpha,\beta)$
and $(\gamma,\delta)$ such that the NE-distance between $\alpha$ and $\beta$
is equal to $2$, and the NW-distance between $\delta$ and $\gamma$ is equal
to $2$.
By convexity of $P$, there are two possible cases:
\begin{itemize}
\item[(1)] $x_\alpha < x_\delta < x_\gamma < x_\beta$ and
$y_\gamma < y_\alpha < y_\beta < y_\delta$;
\item[(2)] $x_\delta < x_\alpha < x_\beta < x_\gamma$ and
$y_\alpha < y_\gamma < y_\delta < y_\beta$,
\end{itemize}
where, for a cell $\eta$, $(x_\eta,y_\eta)$ are the coordinates of its center.
We assume we are in case (1), since the second one is treated analogously (see Figure~\ref{fig:c221}).

\begin{figure}[htb]
\begin{center}
\scalebox{0.5}{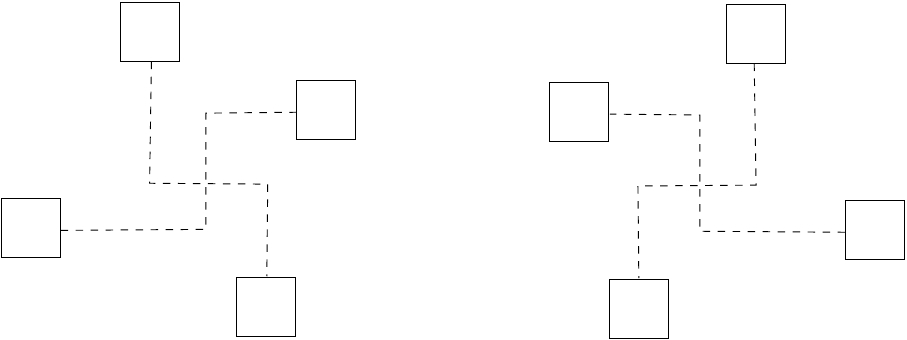}
\caption{The two possible configurations for the points $\alpha,\beta,\gamma,\delta$.}
\label{fig:c221}
\end{center}
\end{figure}

Of all possible choices for $(\alpha,\beta)$ and $(\gamma,\delta)$, we will pick the ones such that the horizontal distance $x_\beta-x_\alpha$ (resp.\ the vertical distance $y_\delta-y_\gamma$) is minimized, and that $\alpha$ (resp.\ $\beta, \gamma, \delta$) is at the bottom (resp.\ top, left, right) end of its column $c_\alpha$ (resp.\ column $c_\beta$, row $r_\gamma$, row $r_\delta$). See Figure~\ref{fig:c22} for a schematic. Note that $c_\alpha \nearrow c_\beta$ and $r_\delta \nwarrow r_\gamma$. 

% Then, there exists a pair of columns $c_\alpha,c_\beta$ such that
% $c_\alpha$ lies to the left of $c_\beta$ and $c_\alpha \nearrow c_\beta$,
% and a pair of rows $r_\gamma,r_\delta$ such that $r_\gamma$ lies below
% $r_\delta$ and $r_\delta \nwarrow r_\gamma$. We assume that
% $c_\alpha,c_\beta$ (resp.\ $r_\gamma,r_\delta$) are chosen so that the
% horizontal (resp.\ vertical) distance between them is minimal.

% Let $r_\alpha$ (resp.\ $r_\beta$) be the row containing $\alpha$
% (resp.\ $\beta$) with minimum (resp.\ maximum) ordinate in $c_\alpha$
% (resp.\ $c_\beta$).
The geometry of the configuration and the convexity of $P$ imply that
\[
L(r_\alpha) < L(r_\beta) \le L(r_\delta) < L(r_\gamma)
\quad \text{and} \quad
R(r_\delta) < R(r_\gamma) \le R(r_\alpha) < R(r_\beta),
\]
where $r_\alpha$ and $r_\beta$ are the rows containing $\alpha$ and $\beta$ respectively.
In particular, we obtain $r_\alpha \nearrow r_\beta$.

Therefore, there exist a row $r_\epsilon$ (between $r_\beta$ and $r_\delta$,
possibly equal to $r_\beta$) and a row $r_\zeta$ (between $r_\gamma$ and
$r_\alpha$, possibly equal to $r_\alpha$) such that $r_\zeta \nearrow r_\epsilon$
and their vertical distance is maximal, in the sense that if $s'$ is any row
strictly above $r_\epsilon$ and $s$ any row strictly below $r_\zeta$, then
$s \nearrow s'$ does not hold.

It follows that the rows above $r_\epsilon$ form an up-stack, while the
rows below $r_\zeta$ form a down-stack.

Consider now the rectangle $\cal R$ whose north--west corner is the left
endpoint of the row $r_\epsilon$ and whose south--east corner is the right
endpoint of the row $r_\zeta$. Since every row $s$ below $r_\zeta$ satisfies
$L(s) \ge L(r_\zeta)$, the columns to the left of $\cal R$ form a left stack.
Similarly, the columns to the right of $\cal R$ form a right stack.

Therefore, $P$ is a $4$-stack polyomino with supporting rectangle $\cal R$.
\end{proof}
\begin{figure}[htb]
\begin{center}
\scalebox{0.45}{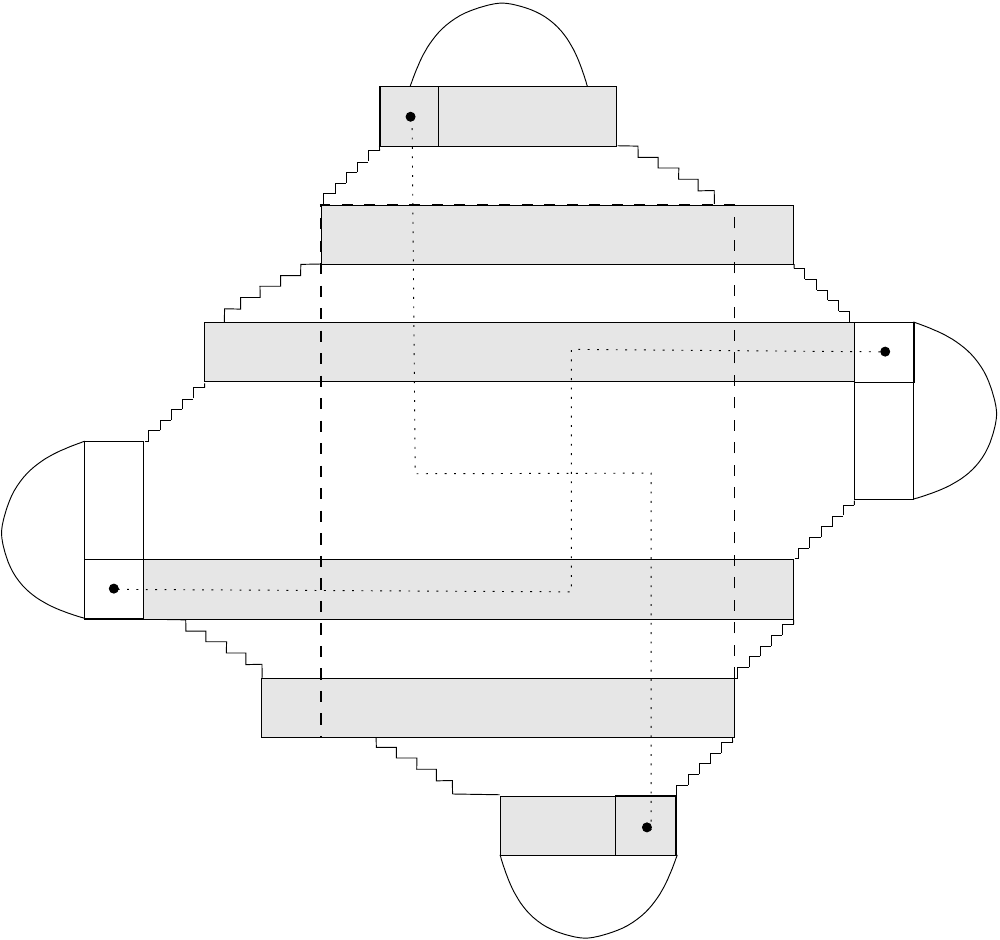}
\caption{Graphical representation of the proof of Proposition~\ref{prop:inclusion}.}
\label{fig:c22}
\end{center}
\end{figure}

A direct enumeration of $\mcC(2,2)$ appears to be non–trivial, since each polyomino in this family is characterized by the simultaneous satisfaction of two existence conditions (namely, the presence of two pairs of rows shifted in the north–east and north–west directions, respectively), which are difficult to control combinatorially. 
It is then more natural to approach the problem indirectly, by studying and enumerating the class of convex polyominoes that do not belong to $\mcC(2,2)$.

\paragraph{Ascending convex polyominoes} 

Let us define: 
\[
{\cal A}=\bigcup_{k\geq 1} \mcC(k,1) \cup \mcC(0,0) \quad \text{and} \quad {\cal D}=\bigcup_{k\geq 1} \mcC(1,k) \cup \mcC(0,0) 
\]
as the family of \emph{ascending} (resp.\ \emph{descending}) convex polyominoes. For any given size, the number of ascending and descending convex polyominoes is equal, and their intersection corresponds to the family of \(L\)-convex polyominoes. Therefore, we obtain the following combinatorial identity:

\begin{proposition}
For every \(n\geq 2\), the number \(c(n)\) of convex polyominoes of size $n$ is given by:
\[
c(n) = 2a(n) + k(n) - \ell(n) \, ,
\]
where \(a(n)\) (resp., \(k(n)\), \(\ell(n)\)) denotes the number of ascending convex (resp.~\(\mcC(2,2)\), \(L\)-convex) polyominoes of size \(n\).
\end{proposition}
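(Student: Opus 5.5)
The plan is an inclusion--exclusion count over the decomposition of convex polyominoes listed in item~4 above. First check that this decomposition is a partition of $\mathcal{C}$ (restricted to size $n$) once indices are read correctly. Every convex polyomino $P$ has a well-defined pair $(D_\mathrm{NE}(P),D_\mathrm{NW}(P))$. If both entries are at least $2$, then Proposition~\ref{prop:dnw} forces the pair to be $(2,2)$. Indeed, if the entries differ, the larger one exceeds $2$ and the smaller one must equal $1$. If they are equal and both exceed $2$, the same argument applied in the symmetric situation excludes this case as well; this needs the symmetric reading of Proposition~\ref{prop:dnw} together with the remark after it that a polyomino of degree greater than $2$ is $1$-convex in one direction. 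Hence every convex polyomino lies in exactly one of the following: $\mathcal{C}(2,2)$; the ascending family $\mathcal{A}$, i.e.\ NW-degree at most $1$; or the descending family $\mathcal{D}$, i.e.\ NE-degree at most $1$. Polyominoes with both degrees at most $1$ lie in both $\mathcal{A}$ and $\mathcal{D}$. So I would establish
\[
\mathcal{C}=\mathcal{A}\cup\mathcal{D}\cup\mathcal{C}(2,2),\qquad \mathcal{C}(2,2)\cap(\mathcal{A}\cup\mathcal{D})=\emptyset,\qquad \mathcal{A}\cap\mathcal{D}=\mathcal{L},
\]
with the last identity following from item~1 (the $L$-convex polyominoes are exactly those with both degrees at most $1$). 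Some care is needed with the degenerate cases where a degree is $0$, i.e.\ the NE- or NW-relation only connects cells along straight lines. Here I would interpret $\mathcal{A}$ as all convex polyominoes with $D_\mathrm{NW}\le 1$, matching the intended meaning of the union in the definition, so that all of $\mathcal{C}(k,1)$, $\mathcal{C}(k,0)$ and $\mathcal{C}(0,0)$ are covered consistently.

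Second, prove $|\mathcal{A}_n|=|\mathcal{D}_n|$ by a bijection: reflect across a vertical axis. This preserves convexity and semi-perimeter, and maps NE-paths to NW-paths and conversely, so it swaps $D_\mathrm{NE}$ and $D_\mathrm{NW}$. It therefore maps $\mathcal{C}(x,y)$ onto $\mathcal{C}(y,x)$ and $\mathcal{A}$ onto $\mathcal{D}$.

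Finally, apply inclusion--exclusion at size $n$:
\[
c(n)=|\mathcal{A}_n|+|\mathcal{D}_n|-|\mathcal{A}_n\cap\mathcal{D}_n|+k(n)=2a(n)-\ell(n)+k(n).
\]
The main obstacle is the bookkeeping in the first step: showing that Proposition~\ref{prop:dnw} really excludes every pair of degrees outside the three pieces, including the equal-degree case $D_\mathrm{NE}=D_\mathrm{NW}>2$, and handling the degree-$0$ cases correctly. For the equal case I would argue directly: take pairs realizing NE-distance $k$ and NW-distance $k$, and observe as in the proof of Proposition~\ref{prop:inclusion} that the two configurations interlock. That interlocking forces each of those distances to be at most $2$.
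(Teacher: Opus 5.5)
Your proposal is correct and follows the paper's reasoning. The paper obtains the identity by inclusion--exclusion from $\mathcal{C}=\mathcal{A}\cup\mathcal{D}\cup\mcC(2,2)$ (a consequence of Proposition~\ref{prop:dnw}), from the equinumerosity of $\mathcal{A}$ and $\mathcal{D}$ (your vertical reflection), and from the fact that $\mathcal{A}\cap\mathcal{D}$ is the class of $L$-convex polyominoes. Two of your points go beyond the paper: you read $\mathcal{A}$ as $\{D_\mathrm{NW}\le 1\}$, which is needed so that classes such as $\mcC(k,0)$ are not lost, and you flag that the equal case $D_\mathrm{NE}=D_\mathrm{NW}>2$ is not literally covered by Proposition~\ref{prop:dnw}. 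The paper passes over both by appealing to the remark after that proposition, and citing that remark is enough here; your \emph{interlocking} alternative, as written, is only a sketch and not yet an argument.
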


Figure~\ref{fig:small} (c) shows a centered ascending polyomino  $\mcC(2,1)$; (d) shows a $4$-stack polyomino in $\mcC(2,2)$; (e) shows (non  centered) polyominoes in $\mcC(2,1)$. 

In this paper, we focus on the enumeration of ascending convex polyominoes (briefly, ascending polyominoes), which, as a neat consequence, leads to the enumeration of the class \(\mcC(2,2)\) (and hence of \(\mcC(1,2)\) and \(\mcC(2,1)\)). This will contribute to a deeper understanding of both \(Z\)-convex polyominoes and convex polyominoes, and provide a cleaner enumeration result than that presented in \cite{rin}.\footnote{The name ``ascending polyominoes'' is also used in a related context \cite{guttmann} for a class of $Z$-convex polyominoes, but the definition adopted there does not coincide with the one studied in the present work.}

\begin{comment}
\begin{figure}[htb]
\begin{center}
\includegraphics[width=110mm]{skew.eps}\caption{(a) A polyomino in $C(2,2)$. There are (at least) two cells that can be uniquely connected by means of a path $E^hN^kE^{h'}$ and (at least) two cells  that can be uniquely connected by means of a path $N^kW^{h}N^{k'}$, $h,h',k,k'\geq 1$. (b) An ascending  polyomino, in $C(2,1)$, where the rows $r,s,t$ are such that $(r,s)\in I$ and $(t,s) \in I$, and $(r,t)\in N$.}\label{fig:skew}
\end{center}
\end{figure}
\end{comment}

\begin{figure}[htb]
\begin{center}
\includegraphics[width=140mm]{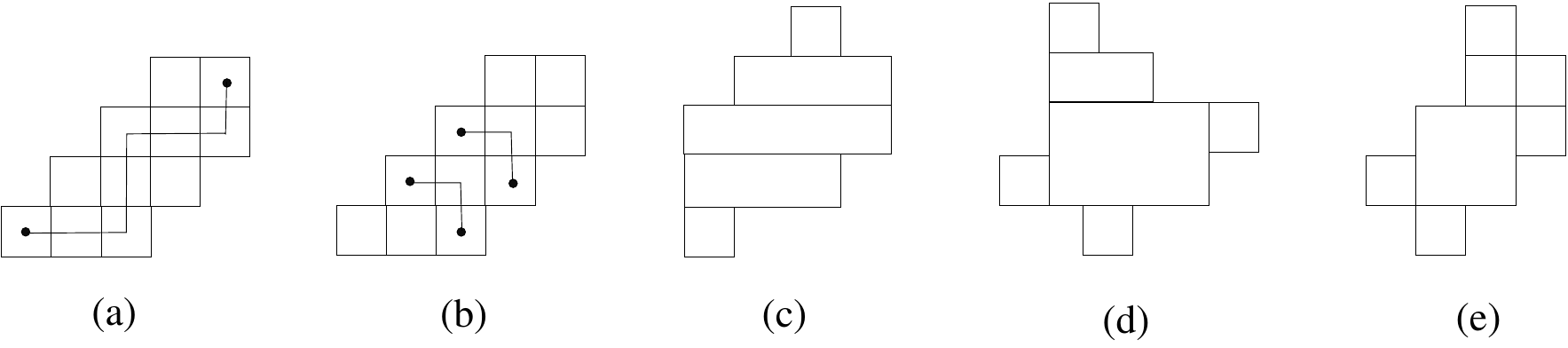}\caption{(a),(b) Convex polyomino in $\mcC(3,1)$; (c) Centered ascending polyomino; (d) Non-centered polyomino in $\mcC(2,2)$. (e) Non-centered ascending polyomino in $\mcC(2,1)$.}\label{fig:small}
\end{center}
\end{figure}

\begin{comment}
    Here is a graphical representation of the decompositions we dealt with in this section.

\begin{figure}
    \centering
    \scalebox{0.25}{\input{schema3.pdftex_t}} 
    \caption{The decomposition of convex polyominoes in the famlies $C(x,y)$, and the family of ascending polyominoes.}
    \label{fig:enter-label}
\end{figure}
\end{comment}

Ascending polyominoes can be characterized geometrically in a straightforward manner, as outlined below, by examining the relative positioning of the rows. This characterization reveals that they naturally extend $L$-convex polyominoes.

%For a generic row  $r$, we denote $x_r,x'_r$ the abscissas of the left and right side of $r$. 

We define the following relations on the rows of a convex polyomino:

\begin{description}
\item{i)} the relation $\preceq$, ({\em row inclusion}). Given two rows $s,t$, we say that $s \preceq t$ if:
$L(t) \leq L(s)$ and $R(s) \leq R(t)$,
\item{ii)} the relation $\nearrow$, ({\em north-east shift}). Given two rows $s,t$, such that $s$ is below $t$, we say that $s \nearrow t$  if: $L(s) < L(t)$ and $R(s) < R(t)$. 
\end{description}
\begin{comment}
\begin{figure}[htb]
\begin{center}
\includegraphics[width=60mm]{inclusion}\caption{Two rows $s,t$: (a) $s \preceq t$. (b) $s \nearrow t$.}\label{inclusion}
\end{center}
\end{figure}

\end{comment}

\begin{proposition}\label{def:neskew}
An ascending polyomino is a convex polyomino such that for every pair of rows $r_1,r_2$, with $r_1$ lying below $r_2$, we have at least one of $r_1 \preceq r_2$, $r_2 \preceq r_1$, or $r_1 \nearrow r_2$.
\end{proposition}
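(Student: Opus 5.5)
We need to show: a convex polyomino $P$ lies in ${\cal A}=\bigcup_{k\ge1}\mcC(k,1)\cup\mcC(0,0)$ if and only if no two rows $r_1$ (below) and $r_2$ (above) satisfy $r_2\nwarrow r_1$ in the sense of the proof of Proposition~\ref{prop:inclusion}, i.e.\ $L(r_2)<L(r_1)$ and $R(r_2)<R(r_1)$. Indeed, for two rows in a convex polyomino, the negation of ``$r_1\preceq r_2$ or $r_2\preceq r_1$ or $r_1\nearrow r_2$'' is exactly $L(r_2)<L(r_1)$ and $R(r_2)<R(r_1)$: compare the two pairs of endpoints and treat the equality cases as inclusions. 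So the plan is to prove that $D_\mathrm{NW}(P)\le 1$ holds if and only if there is no NW-shifted pair of rows. We also need that $D_\mathrm{NW}(P)=0$ together with $D_\mathrm{NE}(P)\ge1$ is impossible or harmless. Membership in ${\cal A}$ is equivalent to $D_\mathrm{NW}(P)\le 1$, except for the degenerate classes $\mcC(k,0)$, which we will check are covered by $\mcC(0,0)$ or empty. In fact $D_\mathrm{NW}=0$ means every NW-related pair lies in a common row or column, and a short check shows this forces the other degree to be small; that check we expect to be routine.

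\emph{Step 1 (NW-shifted rows give $D_\mathrm{NW}\ge2$).} Suppose $r_1$ lies below $r_2$ with $L(r_2)<L(r_1)$ and $R(r_2)<R(r_1)$. Take $\delta$ the rightmost cell of $r_1$ and $\gamma$ the leftmost cell of $r_2$, so that $\delta\nwarrow\gamma$. A NW-path with at most one turn is either $N^aW^b$ or $W^bN^a$. The first would need the column $x=R(r_1)$ to reach row $r_2$, and then the cell $(R(r_1),y_{r_2})$ would lie outside $r_2$ because $R(r_2)<R(r_1)$. The second would need row $r_1$ to extend left to $L(r_2)<L(r_1)$, which it does not. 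Hence $D_\mathrm{NW}(\delta,\gamma)\ge2$.

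\emph{Step 2 (no NW-shifted rows gives $D_\mathrm{NW}\le1$).} Let $a\nwarrow b$ with $a$ in row $s$ below $b$ in row $t$. By hypothesis either $s\preceq t$ or $t\preceq s$ or $s\nearrow t$. If $s\preceq t$, go north from $a$ up to row $t$: by convexity of columns this stays inside $P$, since the cell above $a$ in row $t$ exists because $L(t)\le x_a\le R(t)$ and the column between is convex. Then go west along $t$ to $b$, which gives one turn. If $t\preceq s$, go west along $s$ to $x_b$ (possible since $x_b\ge L(t)\ge L(s)$) and then north in column $x_b$. If $s\nearrow t$, then $L(s)<L(t)\le x_b\le x_a$, so the path $W$ then $N$ works as in the previous case.

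\emph{Main obstacle.} The delicate point is the bookkeeping between the $\mcC(x,y)$ labels and the union defining ${\cal A}$, namely confirming that ``$D_\mathrm{NW}\le1$'' corresponds exactly to ${\cal A}$ via Proposition~\ref{prop:dnw} and the listed decomposition. We would handle this by invoking the decomposition item~4 together with Step 1 and Step 2, and then verifying the small degenerate cases with degree $0$ directly.
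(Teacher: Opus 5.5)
The paper states this proposition without proof, so the question is only whether your argument closes. Its core is sound. The failure of all three relations is exactly $L(r_2)<L(r_1)$ and $R(r_2)<R(r_1)$, and your Steps~1 and~2 correctly show that a convex $P$ satisfies the row condition if and only if $D_\mathrm{NW}(P)\le 1$. In Step~1 you should still justify $\delta\nwarrow\gamma$. By the monotone-path characterization of convexity quoted in Section~\ref{sec:definitions}, some monotone path joins $\delta$ and $\gamma$. Since $\gamma$ is strictly north and strictly west of $\delta$, that path must be a NW-path.

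The gap is in the bookkeeping you call routine: the check you plan there fails, because the claim behind it is false. $D_\mathrm{NW}(P)=0$ does not force the NE-degree to be small, and $\mcC(k,0)$ is neither empty nor contained in $\mcC(0,0)$. Take the staircase with cells $(0,0),(1,0),(1,1),(2,1)$. No two of its cells are in strict NW position, so $D_\mathrm{NW}=0$. The only NE-path from $(0,0)$ to $(2,1)$ is $ENE$, so $D_\mathrm{NE}=2$. Staircases with $k+2$ cells lie in $\mcC(k,0)$ for every $k\ge 1$. You also overlook $\mcC(0,1)$, for example the L-tromino $(0,0),(1,0),(0,1)$.

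All of these satisfy the row condition, yet none lies in the literal union $\bigcup_{k\ge1}\mcC(k,1)\cup\mcC(0,0)$. So under that literal reading the proposition is false, and invoking item~4 cannot help. Item~4 omits $\mcC(k,0)$ for $k\ge 2$, and a staircase with $D_\mathrm{NE}=3$, $D_\mathrm{NW}=0$ even contradicts Proposition~\ref{prop:dnw} as quoted.

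The fix is to take ${\cal A}$ to mean the convex polyominoes with $D_\mathrm{NW}(P)\le 1$, which is what the paper actually uses. It declares parallelogram polyominoes ascending. It says ${\cal A}\cap{\cal D}$ is all $L$-convex polyominoes, which by its item~1 includes $\mcC(1,0)$ and $\mcC(0,1)$. And $a(4)=7$ counts all seven convex polyominoes of semi-perimeter~4, four of which are L-trominoes in $\mcC(1,0)\cup\mcC(0,1)$. With that definition your Steps~1 and~2 are already the whole proof, and you should drop the detour through the $\mcC(x,y)$ decomposition.
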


At the end of this section, we would like to emphasize that the family of ascending polyominoes properly includes some well-known families of polyominoes in the literature, characterized by directional properties, such as the family of parallelogram polyominoes and the family of directed-convex polyominoes \cite{mbm2,convex}.

\section{Generation of ascending polyominoes}

We partition the family of ascending polyominoes into two subclasses:
\emph{horizontally centered} (or simply \emph{centered}) ascending polyominoes, denoted by ${\cal H}$,
and \emph{non-centered} ascending polyominoes, denoted by ${\cal N}$.  

Moreover, an ascending polyomino $P$ is said to be \emph{rectangular} if the topmost cell
of its rightmost column attains the maximum height among all cells of $P$ (see Figure~\ref{label}~(b)).

\subsection{Labeling ascending polyominoes}

Let $P\in{\cal H}$ be a centered ascending polyomino.
The set of rows that extend from the left side to the right side of the minimal bounding rectangle of $P$ forms a supporting rectangle of minimal height, which we call the \emph{base} of $P$.

We denote by $b(P)$ (resp.\ $\ell(P)$) the height (resp.\ width) of the base of $P$.
We say that $P$ is a \emph{flipped stack} if it contains no cell strictly above its base.

To each centered ascending polyomino $P \in \mathcal{H}$  we associate a label
$(b,w,r)$ if $P$ is not rectangular, or $(b,w,r)'$ if $P$ is rectangular, with $b\ge1$ and $w,r\ge0$,
which encodes the combinatorial data governing the growth of $P$
(see Figure~\ref{label}):
\begin{description}
\item[$b$:] the height of the base;
\item[$w$:] if $P$ is a flipped stack, then $w=\ell(P)$; otherwise, let $B_u$ denote
the row immediately above the base, and define $w$ as the horizontal distance between $B_u$
and the left side of the minimal bounding rectangle of~$P$;
\item[$r$:] the number of cells in the last column strictly above the base.
\end{description}

Observe that $w>0$ if and only if no cell lies above the upper-left corner of the base
(see Figure~\ref{label}~(a), (b), (c)). 
Otherwise, $w=0$ and $r=0$ (see Figure~\ref{label}~(d)). Moreover, we observe that a flipped stack is always rectangular, with $r=0$ (see Figure~\ref{label}~(c)).

\begin{figure}[htb]
\begin{center}
\scalebox{0.33}{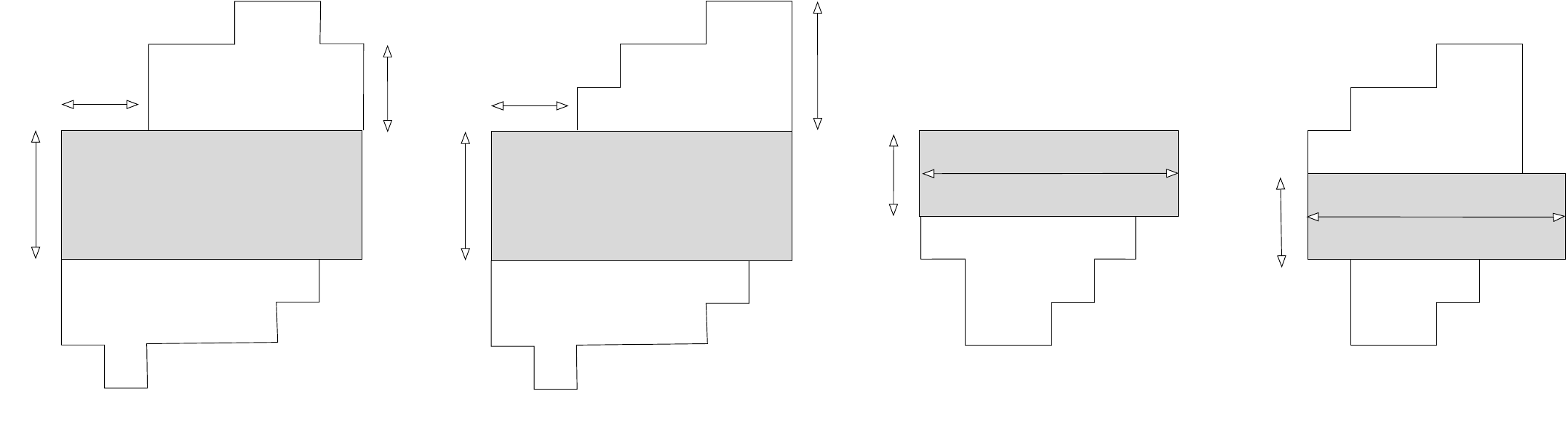}
\caption{Centered ascending polyominoes:
(a) non-rectangular, $b>1$, $r>0$;
(b) rectangular, $b>1$, $r>0$;
(b) flipped stack: $w=\ell(P)$, $r=0$;
(c) non-rectangular, $b>1$, $w=0$, $r=0$.} 
\label{label}
\end{center}
\end{figure}

On the other side, to each non-centered ascending polyomino $P\in{\cal N}$
 we associate a label $(r)$ if $P$ is not rectangular or $(r)'$ if $P$ is rectangular,
with $r\ge1$, where $r$ denotes the number of cells in the last column
(see Figure~\ref{fig:ncstar}). In this case, we will use {\em base} to refer to the set of rows that extend from the left side and have maximal length.  

\begin{comment}
\begin{figure}[htb]
\begin{center}
\scalebox{0.33}{\input{label_nr.pdftex_t}}
\caption{Non-centered ascending polyominoes:
(a) non-rectangular;
(b) rectangular. In both cases, the base is highlighted.}
\label{label_r}
\end{center}
\end{figure}
\end{comment}

\subsection{Operations on centered ascending polyominoes}

We start defining a set of operations on a centered ascending polyomino $P$ of size $n$, 
each producing centered ascending polyominoes of size $n+1$.

\begin{description}
\item[Left Cell.]
This operation can be applied to any centered ascending polyomino.  
It consists of attaching a new cell to the left side of the first column of the base of \(P\), in all admissible vertical positions.

Since the base has height \(b\), this construction produces exactly \(b\) distinct centered ascending polyominoes, each having exactly one cell in the leftmost column.

If the  cell is added in any position \emph{other than the topmost one}, the resulting polyomino satisfies
$
w(P') = 1.
$
Conversely, if the cell is attached to the left of the \emph{topmost cell} of the base, the resulting polyomino \(P'\) satisfies
$w(P') = w(P) + 1$
(see Figure~\ref{fig:leftcell}).

This operation preserves the rectangular property of \(P\).

Moreover, the parameter \(r\) of the resulting polyominoes ranges from \(r\), when the new cell is attached in the topmost position, up to \(r+b-1\), when the cell is attached in the lowest position.

\begin{figure}[htb]
\begin{center}
\scalebox{0.36}{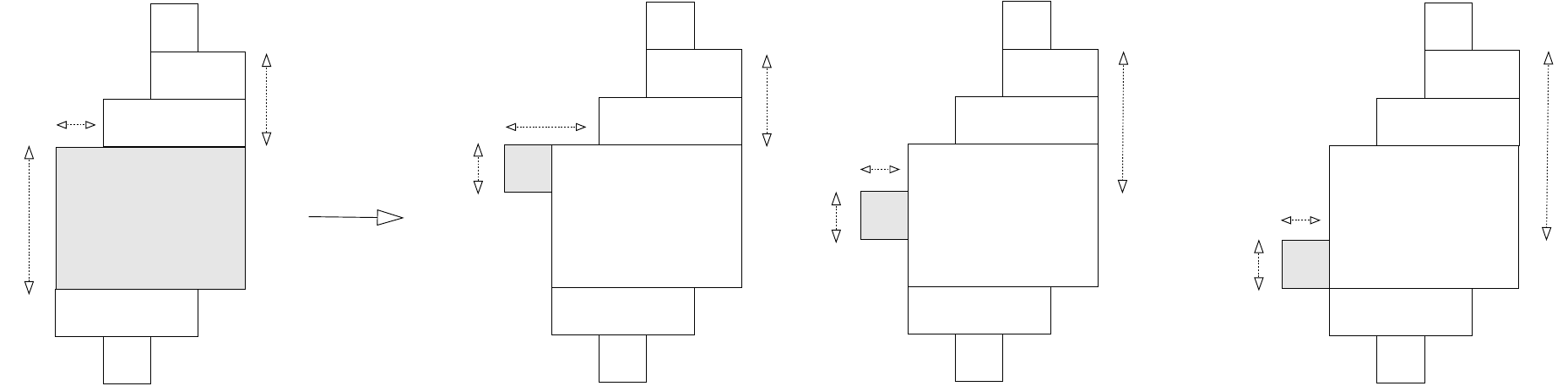}\caption{The application of the operation {\em Left Cell} to a centered ascending polyomino.}\label{fig:leftcell}
\end{center}
\end{figure}
\item[\textbf{Right Cell.}]
This operation can be applied to all centered ascending polyominoes,
except those that have exactly one cell in the leftmost column, in order to avoid ambiguity.

It consists of attaching a new cell to the right side of the base of \(P\),
in all admissible vertical positions.

Since the base has height \(b\), this construction produces exactly \(b\)
distinct centered ascending polyominoes.
For all of them, the resulting polyomino satisfies \(b=1\) and \(r=0\).

If the added cell is placed in any position \emph{other than the topmost one},
the resulting polyomino \(P'\) satisfies
$
w(P') = 0.
$

If instead the cell is attached to the right of the \emph{topmost cell} of the base,
three cases must be distinguished:
\begin{enumerate}
    \item if \(w(P)=0\), then \(w(P')=0\);
    \item if \(P\) is a \emph{flipped stack}, that is, if \(w(P)=\ell(P)\),
    then \(w(P')=w(P)+1\);
    \item in all remaining cases, \(w(P')=w(P)\)
    (see Figure~\ref{fig:rightcell}).
\end{enumerate}

\begin{figure}[htb]
\centering
\scalebox{0.5}{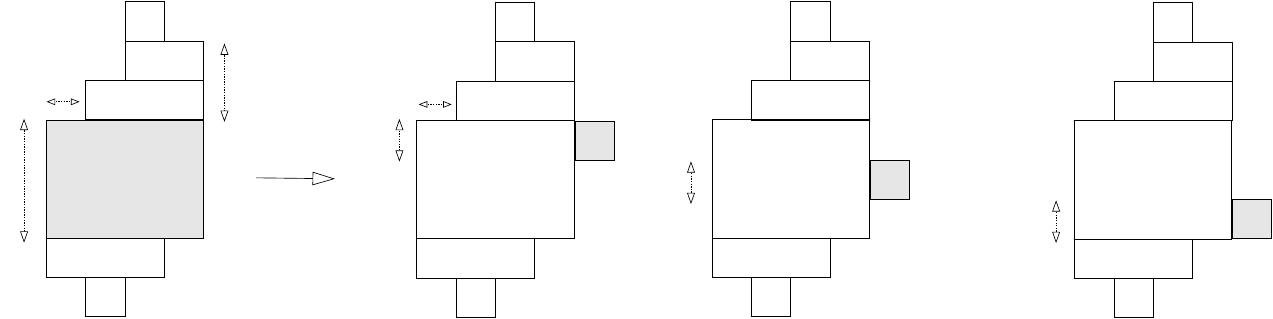}
\caption{Example of the \emph{Right Cell} operation applied to a centered ascending polyomino (case~(3)).}
\label{fig:rightcell}
\end{figure}

Finally, observe that the resulting polyomino \(P'\) is rectangular if and only if
\(P\) is a flipped stack and the added cell is placed in the topmost position.
In all the other cases, the obtained polyomino is not rectangular.

\item[\em Row.]
This operation consists in adding a new row to the base of $P$. 
It can be applied to any centered ascending polyomino, 
and produces a single centered ascending polyomino in which $b$ increases by~1 
while all the other properties remain unchanged (see Figure~\ref{fig:row}).

 \begin{figure}[htb]
\begin{center}
\scalebox{0.5}{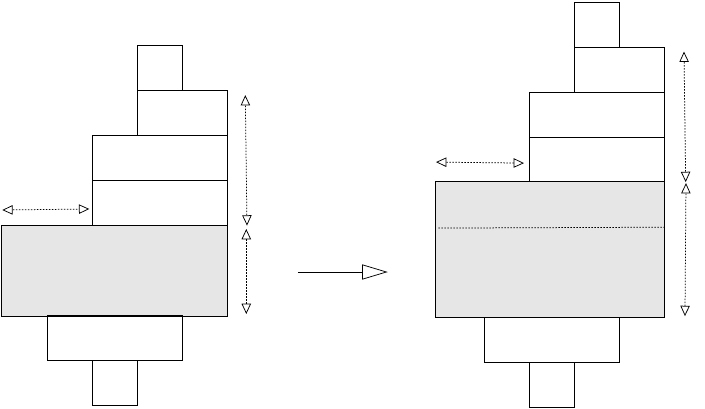}\caption{The application of the operation {\em Row} to a centered ascending polyomino.}\label{fig:row}
\end{center}
\end{figure}

\item[\em Shift.]  
The operation can be applied only if \(w > 0\) and provided that:
\begin{description}
    \item[(i)] \(P\) has more than one cell in the leftmost column; otherwise, the resulting polyomino can be obtained 
    by applying the {\em Left Cell} operation;
    \item[(ii)] \(b = 1\); otherwise, the resulting polyomino can be obtained 
    by applying the {\em Row} operation.
\end{description}

For simplicity, let us distinguish two cases:
\begin{enumerate}
    \item If \(P\) is a flipped stack, (i.e., \(w(P) = \ell(P)\), $r=0$, and $P$ is rectangular)  
    the {\em Shift} operation adds a new row of length ranging from \(1\) to \(\ell(P) - 1\) 
    on top of the uppermost row of the base of \(P\) (see Figure~\ref{fig:shift} (a)).  
    It therefore produces \(w(P) - 1\) rectangular polyominoes, all with \(b = 1\), $r=1$, 
    and with \(w\) taking values from \(\ell(P) - 1\) down to \(1\).
    
    \item Otherwise, the {\em Shift} operation adds a new row of length ranging from \(\ell(P)-w(P)\) to \(\ell(P) - 1\) 
    on top of the uppermost row of the base of \(P\) (see Figure~\ref{fig:shift} (b)).  
    It therefore produces \(w(P)\) polyominoes, all with \(b = 1\), $r$ increased by one, 
    and with \(w\) taking values from \(\ell(P) - 1\) down to \(1\).  In this case, the obtained polyominoes are (all) rectangular if and only if $P$ is rectangular. 
\end{enumerate}

\begin{figure}[htb]
\begin{center}
\scalebox{0.3}{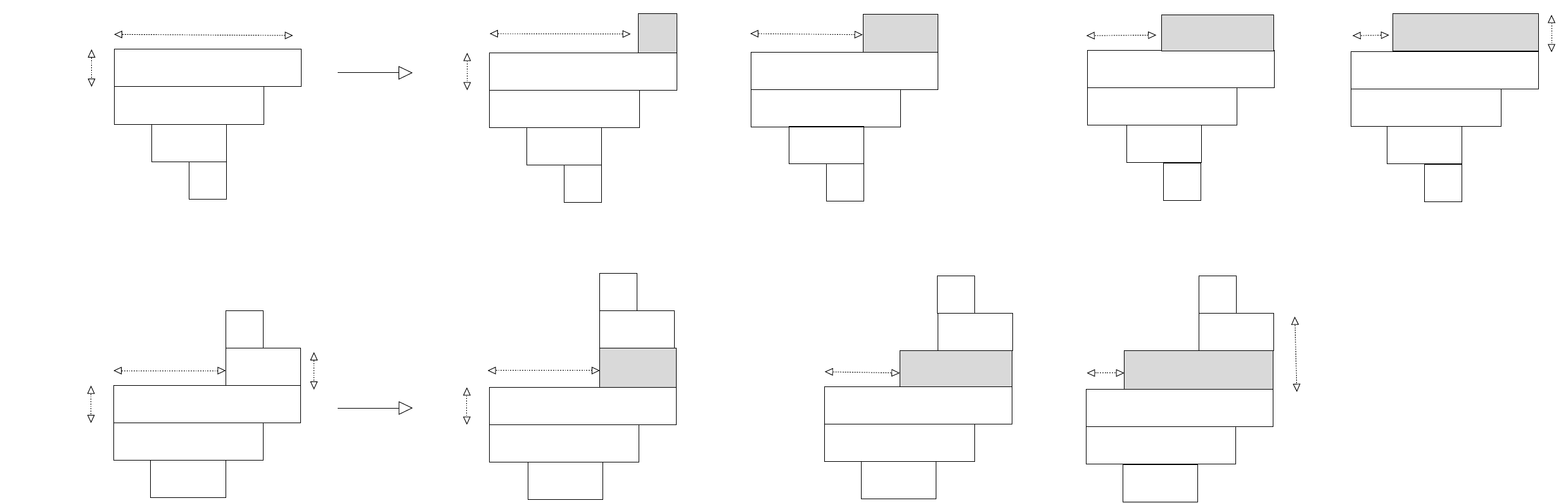}\caption{The application of the operation {\em Shift}: (a)  to a flipped stack (b) to a centered ascending polyomino which is not a flipped stack.}\label{fig:shift}
\end{center}
\end{figure}

\begin{theorem}\label{thm:dec}
Every centered ascending polyomino of size \(n+1\) is uniquely generated from a centered ascending polyomino of size \(n\) through the application of one among the operations {\em Left Cell}, {\em Right Cell}, {\em Row}, or {\em Shift}.
\end{theorem}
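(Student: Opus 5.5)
The natural route is to build an explicit inverse: a deterministic ``reduction'' map $\rho$ that removes one unit of semi-perimeter from any centered ascending polyomino $Q$ of size $n+1$ (with $n+1>2$, the single cell being the root), and then to check that $\rho(Q)$ is again centered ascending and that $Q$ arises from $\rho(Q)$ by exactly one of the four operations. Existence is then immediate. Uniqueness follows once we show that the four operations have pairwise disjoint images, and that within a single operation distinct choices give distinct polyominoes. Disjointness will be read off from simple invariants of $Q$. Let $h$ be the height of the leftmost column and $\ell_Q$ the width of the base. Then \emph{Left Cell} produces exactly the $Q$ with $h=1$ and $\ell_Q>1$. \emph{Row} produces exactly the $Q$ with $b\ge 2$ and $h\ge 2$. \emph{Right Cell} and \emph{Shift} produce $Q$ with $b=1$ and $h\ge2$, and they are told apart by whether the rightmost column of $Q$ has cells strictly above the base: $r(Q)=0$ for Right Cell, $r(Q)\ge1$ for Shift.

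So I define $\rho$ by cases on $Q$, in this order.
\begin{enumerate}
\item If the leftmost column has one cell, delete it. The polyomino stays centered because the cell lay in the base, and the rows are unaffected except that one row gets shorter.
\item Otherwise, if $b\ge2$, delete one row of the base; say the topmost, matching the description of \emph{Row}.
\item Otherwise $b=1$ and $h\ge2$. If $r(Q)=0$, delete the rightmost cell of the base row. I must check that this row is the only cell in the last column; this should follow from $r=0$ together with the ascending characterization of Proposition~\ref{def:neskew}, which rules out cells of the last column below the base once cells above the left corner exist.
\item If $r(Q)\ge1$, I claim the row $B_u$ directly above the base must be recovered as an inserted ``shift'' row. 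Collapsing it means removing $B_u$ and lowering the part above it, so that the next row up becomes adjacent to the base. The result is then the polyomino $P$ in which the row above the base becomes the new $B_u$ (or $P$ is a flipped stack if nothing remains).
\end{enumerate}

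For each case I then verify three things: that $\rho(Q)$ is convex, centered and ascending, using Proposition~\ref{def:neskew} row-pair by row-pair; that the size drops by exactly one; and that $Q$ lies in the listed image, with parameters consistent with the label bookkeeping already recorded for that operation (e.g.\ the $w$ range $\ell(P)-w(P),\dots,\ell(P)-1$ for Shift).

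I expect the Shift case to be the main obstacle. First, semi-perimeter is only preserved if removing $B_u$ reduces the height by exactly one; this works because $B_u$ is contained in the base horizontally. Second, I must show that the new row has length in exactly the allowed interval. Its right end is forced to be the right side, since $r$ increases. Its left end must lie weakly right of the left end of the old $B_u$, because of the north-east shift condition $r_1\nearrow r_2$ or inclusion between the new row and the row above it. That is precisely where $w(P)$ enters, with the flipped-stack exception where length $\ell(P)$ is excluded because it would instead be a \emph{Row}. Ruling out overlaps with Row (new row of full length) and Right Cell ($r=0$) is handled by the invariants above. The remaining bookkeeping of $w$ and rectangularity in each case is routine.
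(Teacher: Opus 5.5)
Your reduction $\rho$ is the paper's proof with the cases in a different order. The paper also takes Row when $b>1$, Left Cell when the first column is a single cell, Right Cell when ($b=1$ and) the last column is a single cell, and otherwise deletes $B_u$ to undo Shift. Since $h=1$ forces $b=1$, your partition is the same.

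The trouble is in the step you single out as the crux. You claim the inserted row's left end lies weakly \emph{right} of the left end of $B_u(P)$, and you derive this from the ascending alternatives ($\preceq$ or $\nearrow$) between the inserted row and the row above it. The inequality is backwards. Lengths $\ell(P)-w(P),\dots,\ell(P)-1$ mean left-end offsets $1,\dots,w(P)$ from the left side, so the inserted row lies weakly \emph{left} of $L(B_u(P))$, i.e.\ it contains $B_u(P)$.

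The ascending condition also cannot give the correct inequality. For the right-aligned row $B_u(Q)$ and the row $B_{uu}(Q)$ just above it, $\nearrow$ is impossible, and both inclusions are compatible with Proposition~\ref{def:neskew}. The inclusion $B_u(Q)\preceq B_{uu}(Q)$ is exactly your wrong direction. What forces the right one is column convexity against the full-width base. Every column meeting $B_{uu}(Q)$ also meets the base, hence meets $B_u(Q)$. So $B_{uu}(Q)\preceq B_u(Q)$ and $L(B_u(Q))\le L(B_{uu}(Q))$, which is the bound length $\ge \ell(P)-w(P)$.

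The bound length $\le \ell(P)-1$ holds in every case because $b(Q)=1$, not only for flipped stacks. What is genuinely special about the flipped stack is the other end: there is no $B_{uu}(Q)$, and the lower bound $\ell(P)-w(P)=0$ must be replaced by $1$.

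There is a smaller hole in your Right Cell case. You need that $b=1$, $h\ge 2$ and $r(Q)=0$ force the last column to be a single cell. You argue this only when the first column extends above the base, where the ascending condition does forbid a row below the base reaching the right side. If the first column extends below the base, the ascending condition says nothing, since a left-touching row below a right-touching row is an allowed $\nearrow$ pair. You need convexity instead: if both the first and the last column had cells below the base, the row just below the base would meet both and be full width, contradicting $b=1$.

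The same convexity argument shows that in your case 4 the first column extends only downward and the last column only upward. That is what you need to check that $\rho(Q)$ has $b=1$, $h\ge 2$ and $w>0$, i.e.\ that Shift applies to it. The paper only asserts these facts, so with both fixes your write-up would be more complete than the paper's.
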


\proof
To prove the statement, it suffices to show that for every centered ascending polyomino \(P\) of size \(n+1\), there exists a unique centered ascending polyomino \(\phi(P)\) of size \(n\) such that \(P\) is obtained from \(\phi(P)\) by one of the four operations defined above.  
We proceed by distinguishing the following mutually disjoint cases:

\begin{description}
    \item[(1)] \(b > 1\).  
    In this case, the last applied operation must be {\em Row}, applied to a centered polyomino \(\phi(P)\), since only the {\em Row} operation produces an ascending polyomino satisfying \(b > 1\).

    \item[(2)] \(b = 1\).  
    We consider the following subcases:

    \begin{description}
        \item[(2.1)] \(P\) has exactly one cell in the leftmost column.  
        In this case, \(P\) is obtained by adding a leftmost cell to the centered ascending polyomino \(\phi(P)\) via {\em Left Cell}.

        \item[(2.2)] \(P\) has more than one cell in the leftmost column, but has exactly one cell in the rightmost column.  
        In this case, \(P\) is obtained by adding a rightmost cell to the centered ascending polyomino \(\phi(P)\) via {\em Right Cell}, which is possible because \(\phi(P)\) has more than one cell in the leftmost column.

        \item[(2.3)] Neither case (2.1) nor (2.2) applies.  
        In this case, there is no cell of \(P\) above the first cell of the base, and there is at least one cell below it. Similarly, there is no cell of \(P\) below the last cell of the base, and there is at least one above it.  
        Let \(B_u\) (resp. \(B_d\)) be the row of \(P\) immediately above (resp. below) the base, and let \(\alpha\) be the (possibly empty) stack polyomino lying above \(B_u\).  
        The situation is sketched in Figure~\ref{sketch}.  
        Let \(\phi(P)\) be the polyomino obtained from \(P\) by removing the row \(B_u\).  
        It is a centered ascending polyomino with \(b = 1\). Moreover, \(\phi(P)\) is a flipped stack if and only if \(\alpha\) is empty.  
       It is now clear that \(P\) has been obtained through the application of the {\em Shift} operation to \(\phi(P)\), 
which is possible since \(\phi(P)\) satisfies \(w(\phi(P)) > 0\) and fulfills conditions (1) and (2) required for the application of {\em Shift}.~\qed
    \end{description}
\end{description}

\begin{figure}[htb]
\begin{center}
\scalebox{0.4}{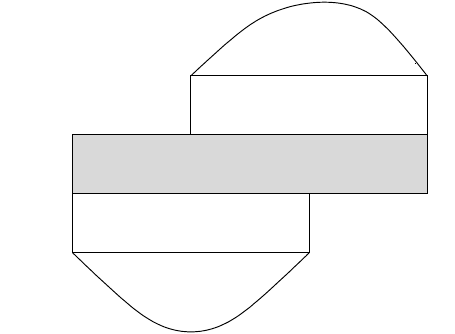}\caption{The situation of case (2.3).}\label{sketch}
\end{center}
\end{figure}
\end{description}

\paragraph{The {\bf \em Nc} operation.}

The \emph{Nc} operation applies to any centered ascending polyomino $P$ with $r>0$.
It generates non-centered polyominoes by attaching a new column of length $r'$,
with $1\le r'\le r$, in all possible positions to the right of the rightmost column
of $P$, strictly above its base (see Figure \ref{fig:nc}).

More precisely, the operation produces $\binom{r+1}{2}$ distinct non-centered
ascending polyominoes. The former base of $P$ becomes the base of the resulting non-centered polyomino.

\begin{figure}[htb]
\begin{center}
\scalebox{0.4}{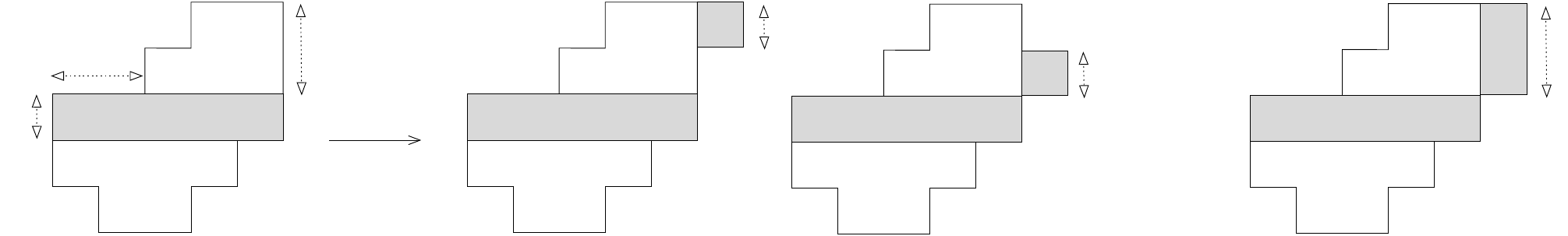}\caption{The application of \emph{Nc} to a rectangular polyomino with $r=2$.}\label{fig:nc}
\end{center}
\end{figure}

If $P$ is rectangular, then the polyomino obtained by adding a column of length $r'$
is rectangular only when  the column is attached at the topmost possible position;
in all other cases, the resulting polyomino is non-rectangular.
Otherwise, if $P$ is not rectangular, then the application of \emph{Nc} always produces
non-rectangular polyominoes.

\subsection{Operations on non-centered ascending polyominoes}

There is a unique operation that can be applied to a non-centered
ascending polyomino of size $n$, producing only non-centered ascending polyominoes of size $n+1$.
This operation mimics the behavior of \emph{Nc}, so it is called \emph{Nc}$^{*}$.
It applies to any non-centered ascending polyomino $P$ having $r>0$
cells in its rightmost column.
It attaches a new column of length $r'$, with $1\le r'\le r$, in all possible
positions to the right of the rightmost column of $P$, strictly above its base
(see Figure~\ref{fig:ncstar}).

Moreover, if $P$ is rectangular, the operation \emph{Nc}$^{*}$ produces one additional
polyomino obtained by adding a single cell on top of the rightmost column of $P$.

Therefore, the \emph{Nc}$^{*}$ operation produces $\binom{r+1}{2}+1$ distinct non-centered
ascending polyominoes if $P$ is rectangular, and $\binom{r+1}{2}$ such polyominoes
otherwise.

\begin{figure}[htb]
\begin{center}
\scalebox{0.33}{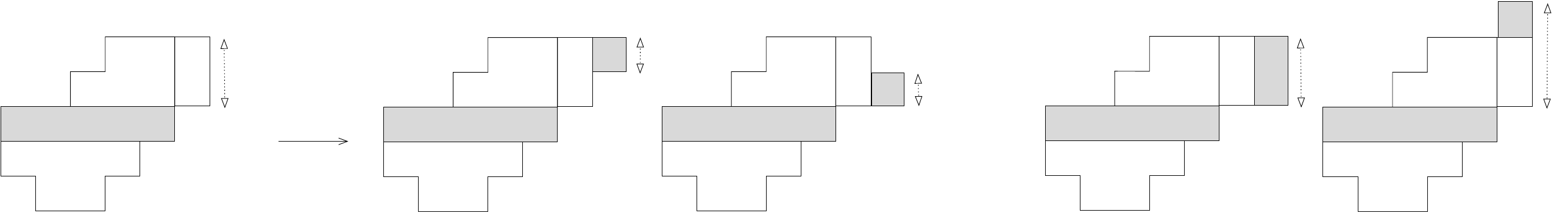}
\caption{The application of \emph{Nc}$^{*}$ to a rectangular non-centered ascending polyomino with $r=2$, together with their labels. The superscript $(r)'$ denotes that the corresponding polyomino is rectangular.}
\label{fig:ncstar}
\end{center}
\end{figure}

If $P$ is rectangular, the polyomino obtained by adding a column of length $r'$
is rectangular only when the column is attached at the topmost possible position,
or when a single cell is added on top of the rightmost column of $P$.
In all other cases, the resulting polyomino is non-rectangular.

If $P$ is not rectangular, then the application of \emph{Nc}$^{*}$ always produces
non-rectangular polyominoes.

\begin{theorem}
Every ascending polyomino of size $n+1$ is uniquely generated from an ascending
polyomino of size $n$ by applying exactly one of the operations
{\em Left Cell}, {\em Right Cell}, {\em Row}, {\em Shift}, {\em Nc}, or {\em Nc}$^*$.
\end{theorem}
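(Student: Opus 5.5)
The proof will follow the pattern of Theorem~\ref{thm:dec}: we build a ``parent'' map $\phi$ on ascending polyominoes of size $n+1$ and show that each $P$ is obtained from $\phi(P)$ by exactly one operation. Ascending polyominoes split into $\mathcal H$ and $\mathcal N$. Left Cell, Right Cell, Row and Shift produce centered polyominoes, while Nc and Nc$^*$ produce non-centered ones. Hence for $P\in\mathcal H$ the statement is exactly Theorem~\ref{thm:dec}. One also checks that Nc and Nc$^*$ never yield a centered polyomino: the added column lies strictly above the base and to the right of the bounding rectangle, so no row reaches the new right side. It therefore remains to treat $P\in\mathcal N$.

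For $P\in\mathcal N$, let $c$ be the rightmost column and $c'$ the column immediately to its left. Define $\phi(P)$ as follows.
\begin{itemize}
\item If $c$ has exactly one cell, lying on top of $c'$ (i.e.\ $U(c)=U(c')+1$ and $D(c)=U(c)$), and $P$ minus the top cell of $c'$...
\end{itemize}
That is the delicate case. The cleaner rule is the following.
\begin{itemize}
\item If $c$ is the single-cell extension described by the extra polyomino of Nc$^*$, meaning its top cell exceeds the height of every other column and removing it leaves a rectangular non-centered polyomino, then $\phi(P)$ is $P$ minus that top cell.
\item Otherwise, $\phi(P)$ is $P$ with column $c$ deleted.
\end{itemize}

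In the second case we must check that $\phi(P)$ is ascending and convex. Convexity holds because deleting an extremal column preserves it. The ascending property holds because, by Proposition~\ref{def:neskew}, it is a condition on pairs of rows, and deleting the last column only shortens rows whose right endpoint was maximal. Those rows lie strictly above the base, so the pairwise relations $\preceq,\nearrow$ persist; this uses the ascending structure, in which rows to the right and above are shifted NE. We then identify the operation:
\begin{itemize}
\item if $\phi(P)$ is centered, i.e.\ $c'$ meets the base, the operation is Nc;
\item otherwise it is Nc$^*$.
\end{itemize}
We also need the length and position constraints. By convexity and the NE-shift property, $c$ lies within the vertical range of the part of $c'$ above the base, and $|c|\le |c'|=r$. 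Moreover $c$ lies strictly above the base, since $P$ is non-centered with base defined as maximal rows from the left. The number of admissible placements of a column of length $r'$ inside a segment of length $r$ is $r-r'+1$, which gives the $\binom{r+1}{2}$ count, confirming the parametrisation.

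Uniqueness follows because the case distinction on $P$ (centered or not, and then the rule above) is exclusive. The main obstacle is the overlap between ``add a column of length $1$ at the topmost position'' and ``add a single cell on top of the rightmost column''. These give different polyominoes: in the first the new cell is in a new column, in the second the top of $c'$ grows. Nevertheless I must argue that the extra cell added by Nc$^*$ can never instead be read as a column deletion. I would handle this by looking at whether the top cell of the rightmost column has a left neighbour. If it does not, and the column has length $\ge 2$, the top cell was added on top, so $P$ came from a rectangular parent via the extra move. I must check this shape cannot arise from Nc, Nc$^*$ column addition. This holds because in a column addition with $r'\le r$ the new column never exceeds the top of $c'$. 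Finally, I would verify that rectangularity of the parent is forced, matching the rule that the extra move applies only to rectangular $P$.
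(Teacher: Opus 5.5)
Your proposal is correct and is essentially the paper's own argument. The centered case is handed to Theorem~\ref{thm:dec}; for non-centered $P$ you remove the top cell of the rightmost column when it is strictly higher than every other cell (the extra \emph{Nc}$^*$ move), and otherwise delete the whole rightmost column, choosing \emph{Nc} or \emph{Nc}$^*$ according to whether the result is centered. The parent in the first case is automatically rectangular and non-centered, so the proviso you add there is redundant.

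A few small slips to tidy up. First, the rightmost column lies strictly above the base because of the ascending condition, not non-centeredness alone: a row touching the right side lying below a base row would satisfy none of $\preceq$, $\succeq$, $\nearrow$. Second, when the parent is centered $r$ counts only the cells of $c'$ above the base, so the bound should read $|c|\le r$ rather than $|c'|=r$. Third, the abandoned first bullet should simply be deleted.
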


\begin{proof}
Let $P$ be an ascending polyomino of size $n+1$.

\medskip
\noindent
\textbf{Case 1: $P$ is non-centered.}

\smallskip
\noindent
(1) Suppose that the topmost cell of the rightmost column of $P$ has strictly
greater height than all other cells of $P$. Then $P$ is rectangular.
By removing this top cell, we obtain a polyomino $\phi(P)$ of size $n$ which is still
non-centered and rectangular. Hence $P$ is obtained from $\phi(P)$ by adding
a cell on top of the last column (operation {\em Nc}$^*$).

\smallskip
\noindent
(2) Otherwise, the topmost cell of the rightmost column has height less than or
equal to the maximal height of the column immediately to its left.
By removing the entire last column, we obtain a polyomino $\phi(P)$ of size $n$.
If $\phi(P)$ is centered, then $P$ is obtained from $\phi(P)$ by applying the
operation {\em Nc}. Otherwise, $P$ is obtained by applying {\em Nc}$^*$.

\medskip
\noindent
\textbf{Case 2: $P$ is centered.}

In this case, the same decomposition argument used in
Theorem~\ref{thm:dec} applies. Therefore, $P$ is uniquely obtained from
$\phi(P)$ by applying exactly one of the operations
{\em Left Cell}, {\em Right Cell}, {\em Row}, or {\em Shift}.

\medskip
\noindent
In all cases, the construction is unique, which concludes the proof.
\end{proof}

\section{A generating tree for ascending polyominoes}

We partition ascending polyominoes into classes according to the values of the parameters 
$b,w,r$ of their elements, so that all polyominoes in a given class are subject to the same set of operations. 
This allows us to formalize the growth of the objects in each class by means of a generating tree.

The case of non-centered ascending polyominoes is the simplest one: as already stated, to a non-centered ascending polyomino we assign the label $(r)'$ if it is rectangular, and $(r)$ otherwise. 

Concerning centered ascending polyominoes, we adopt the convention that, for every class $\chi$, 
the labels of its polyominoes are denoted by $(b,w,r)_\chi$ (non-rectangular) or $(b,w,r)'_\chi$ (rectangular).

\begin{description}

\item[$b>1$:] We further distinguish three subcases (see Figure~\ref{fig:classe_c}):

\begin{description}

\item[\emph{Class $C_0$:}]
These are flipped stacks (always rectangular), characterized by $w>0$ and $r=0$.
Their label is $(b,w,0)'_{C_0}$.

\item[\emph{Class $C$:}]
There is at least one cell above the base and $w>0$; therefore, no cell lies above the top-left corner of the base. 
Their labels are:
\begin{itemize}
\item $(b,w,r)_C$ if the polyomino is non-rectangular, with $r\ge0$;
\item $(b,w,r)'_C$ if the polyomino is rectangular, with $r\ge0$.
\end{itemize}

\item[\emph{Class $C_1$:}]
There is at least one cell above the top-left corner of the base, hence $w=0$ and $r=0$. 
Observe that in this case there are neither cells above nor below the rightmost column of the base, 
so all polyominoes in this class are non-rectangular, with label $(b,0,0)_{C_1}$.

\end{description}

\begin{figure}[ht]
\begin{center}
\scalebox{1}{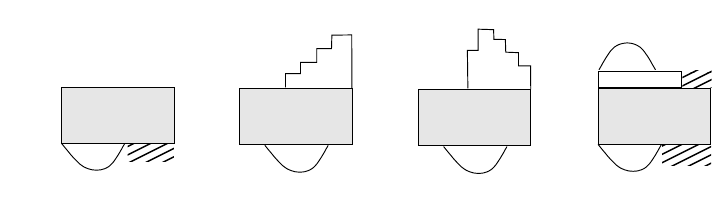}
\caption{The three families with $b>1$. For the family $C$ we show the rectangular and the non-rectangular case.}
\label{fig:classe_c}
\end{center}
\end{figure}

\item[$b=1$:] We further distinguish three subcases:

\begin{description}

\item[\emph{Classes $L_0$ and $L$:}]
The first column of $P$ contains a single cell (hence necessarily $w>0$).
We distinguish two subcases (Figure~\ref{fig:class_l}):
\begin{itemize}
\item class $L_0$, where $P$ is a flipped stack (always rectangular), with label $(1,w,0)'_{L_0}$;
\item class $L$, otherwise, with labels $(1,w,r)_L$ (resp.\ $(1,w,r)'_L$) (rectangular or not).
\end{itemize}

Note that the one-cell polyomino belongs to class $L_0$ and has label $(1,1,0)'_{L_0}$.

\begin{figure}[ht]
\begin{center}
\scalebox{0.8}{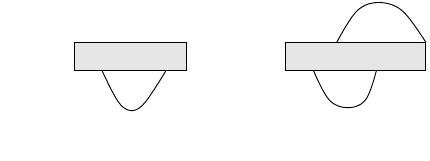}
\caption{The families $L_0$ and $L$.}
\label{fig:class_l}
\end{center}
\end{figure}

\item[\emph{Class $R$:}]
This family consists of centered ascending polyominoes with $w=0$.
This implies that the polyomino has exactly one cell in the rightmost column, hence it is not rectangular and $r=0$.
Their label is $(1,0,0)_R$.

\item[\emph{Classes $S_0$ and $S$:}]
This class comprises all centered ascending polyominoes not considered above, namely those with $b=1$,
having at least one cell below the leftmost cell of the base and no cell above it (thus $w>0$), see Figure~\ref{fig:class_rs}.
We further distinguish:
\begin{itemize}
\item $S_0$: flipped stacks, hence rectangular with $r=0$, label $(1,w,0)'_{S_0}$;
\item $S$: otherwise, with labels $(1,w,r)_S$ (non-rectangular), with $r\geq 0$, and $(1,w,r)'_S$ (rectangular), with $r>0$. Observe that in the non-rectangular case the polyomino may have exactly one cell in the last column, and in this case $r=0$.
\end{itemize}

\end{description}

\begin{figure}[ht]
\begin{center}
\scalebox{0.85}{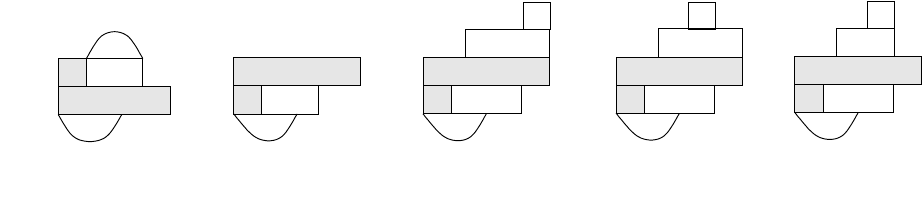}
\caption{The families $R$, $S_0$ and $S$. For the family $S$ we show the cases: rectangular, non-rectangular with $r>0$, and non rectangular with $r=0$.}
\label{fig:class_rs}
\end{center}
\end{figure}

\end{description}

\section{The productions of ascending polyominoes}

In the following we describe the growth of polyominoes of each class, starting from the case of centered ascending polyominoes. The one cell polyomino belongs to $L_0$ and it is rectangular, so it has label $(1,1,0)'_{L_0}$.

\begin{description}
    \item[Classes $C_0$, $C$, $C_1$:] The growths of the three classes are very similar. 
    
    \begin{description}
        \item{- Class $C$:} we can apply all operations for centered ascending polyominoes, except {\em Shift}, (because $b>1$), precisely: {\em LC, RC, Row, Nc}.   
     The productions of a rectangular polyomino in $C$ are shown in Figure \ref{familyC}. Formally:
$$
\begin{array}{llll}
(b,w,r)'_C
&\to 
& (1,w+1,r)'_{L} \, (1,1,r+1)'_L \, \ldots \, (1,1,r+b-1)'_L
& \mbox{\em (LC)} \\
\\
&\to 
& (1,w,0)_S \, (1,0,0)^{b-1}_{R}  
& \mbox{\em (RC)} \\
\\
&\to 
& (b+1,w,r)'_C & \mbox{\em (Row)} \\
\\
&\to 
& (1)' \, (1)^{r-1} \, \ldots \, (r-1)' \, (r-1) \, (r)' & \mbox{\em (Nc)} 
\end{array}
$$

On the other hand, if \(P\) is non--rectangular, the growth is the same,
with the only difference that all the polyominoes produced are non-rectangular.
Consequently, the corresponding production rule is the following:
$$
\begin{array}{llll}
(b,w,r)_C
&\to 
& (1,w+1,r)_{L} \, (1,1,r+1)_L \, \ldots \, (1,1,r+b-1)_L
& \mbox{\em (LC)} \\
\\
&\to 
& (1,w,0)_S \, (1,0,0)^{b-1}_{R}  
& \mbox{\em (RC)} \\
\\
&\to 
& (b+1,w,r)_C & \mbox{\em (Row)} \\
\\
&\to 
& (1)^{r} \, \ldots \, (r-1)^2 \, (r) & \mbox{\em (Nc)} 
\end{array}
$$
    
\begin{figure}[htb]
\begin{center}
 \scalebox{0.55}{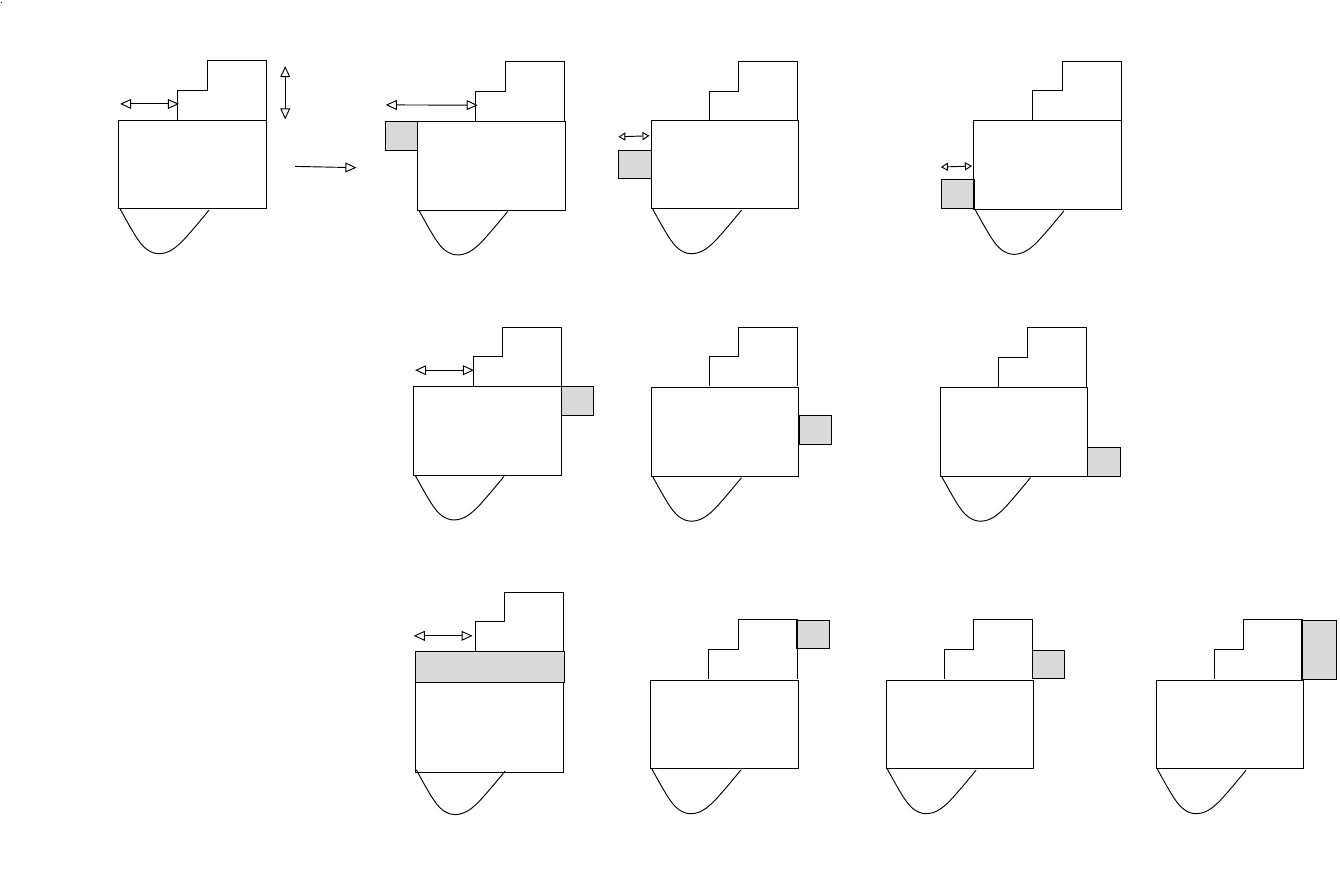}\caption{The growth of a rectangular polyomino in class $C$ .}\label{familyC}
\end{center}
\end{figure}

\item{- Class $C_0$:} The same operations as for class \(C\) can be applied, except {\em Nc}, because $r=0$.  
In this case, however, adding a row, or a cell to the top-left or top-right corner of the base 
results in a polyomino that remains a flipped stack.
Therefore the productions of a polyomino in $C_0$ are:
$$
\begin{array}{llll}
(b,w,0)'_{C_0}
&\to 
& (1,w+1,0)'_{L_0} \, (1,1,1)'_L \, \ldots \, (1,1,b-1)'_L 
 \\
\\
&\to 
& (1,w+1,0)'_{S_0} \, (1,0,0)^{b-1}_{R}  
 \\
\\
&\to 
& (b+1,w,0)'_{C_0} 
\end{array}
$$

\item{- Class $C_1$:} The productions of a polyomino in $C_1$ can be seen as a special case of the productions for the family $C$ (non rectangular case), where $w=r=0$, and precisely, they are:
$$ (b,0,0)_{C_1} \to (1,1,0)_L \ldots (1,1,b-1)_L \,\, (1,0,0)^{b}_{R} \, \, (b+1,0,0)_{C_1} \, .$$

\end{description}
   
     \item[Classes $L$, $L_0$:] To a polyomino in $L$ we can apply only operations {\em LC, Row, Nc} (notice that, as usual, {\em Nc} does not apply if $r=0$), while to a polyomino in $L_0$, being $r=0$, we can apply only operations {\em LC, Row}, giving the productions (see Figure \ref{familyL}): 
$$
\begin{array}{lll} 
(1,w,0)'_{L_0} &\to &(1,w+1,0)'_{L_0} \, \,  (2,w,0)'_{C_0} \\
\\
(1,w,r)'_{L} &\to &(1,w+1,r)'_{L} \, \, (2,w,r)'_{C} \, \, (1)' \, (1)^{r-1} \, \ldots \, (r-1)' \, (r-1) \, (r)' \\
\\
(1,w,r)_{L} &\to &(1,w+1,r)_{L} \, \, (2,w,r)_{C} \, \,  (1)^{r} \, \ldots \, (r-1)^2  \, (r) \\
\end{array}
$$

\begin{figure}[htb]
\begin{center}
 \scalebox{0.6}{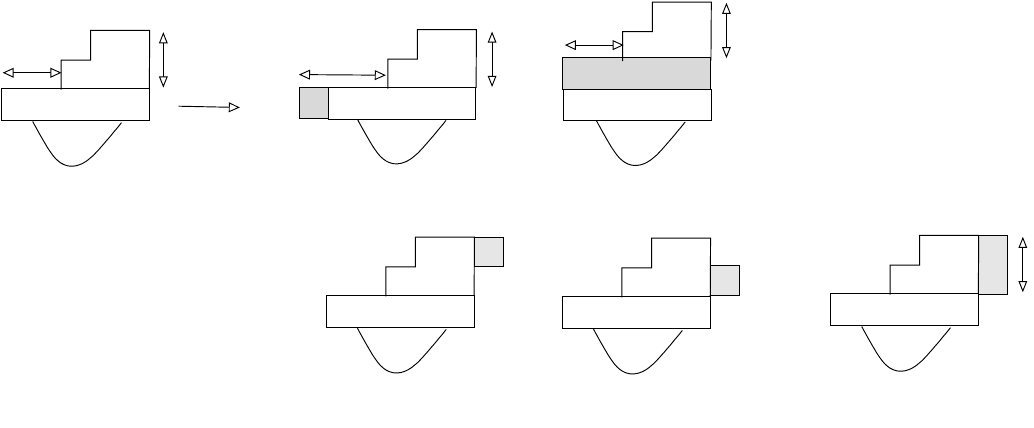}\caption{The growth of a rectangular polyomino in class $L$.}\label{familyL}
\end{center}
\end{figure}
      
      \item[- Class $R$:] 
      In this case, there are no rectangular polyominoes, and only the operations \emph{LC}, \emph{RC}, and \emph{Row} can be applied, since $r=w=0$ (see Figure~\ref{familyR}).
    The production is:

      $$
      (1,0,0)_{R} \to (1,1,0)_{L} \, \, (1,1,0)_{R} \, \, (2,0,0)_{C_1}
      $$

\begin{figure}[htb]
\begin{center}
\scalebox{0.65}{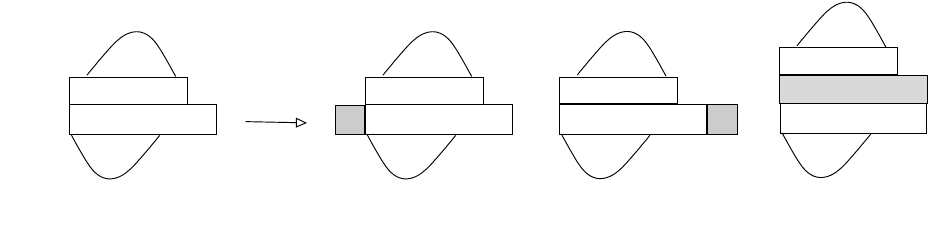}\caption{The growth of a polyomino in  $R$.}\label{familyR}
\end{center}
\end{figure}
      
       \item[- Classes $S$, $S_0$:] To a polyomino in $S$ (resp. $S_0$) we can apply all operations (resp. all operations except {\em Nc}, because $r=0$), see Figure \ref{familyS}. The main difference consists in the fact that the application of {\em RC} produces a polyomino $P'$ such that:
       \begin{description}
           \item{- $w(P')=w(P)+1$}, if $P$ is in $S_0$ (flipped stack);
            \item{- $w(P')=w(P)$}, if $P$ is in $S$,
       \end{description}
       and we should distinguish among the productions of polyominoes in $S$ which are rectangular, and those which are not.  
       The productions are:

   $$
\begin{array}{lll}
(1,w,0)'_{S_0}
&\to & (1,w+1,0)'_{L_0} \, (1,w+1,0)'_{S_0} \, (2,w,0)'_{C_0} \\
& & (1,1,1)'_{S} \, (1,2,1)'_S \, \ldots \, (1,w-1,1)'_S \, ; \\
\\
(1,w,r)'_{S}
&\to 
& (1,w+1,r)'_{L} \, (1,w,0)_{S} \, (2,w,r)'_{C} \\
& &(1,1,r+1)'_{S} (1,2,r+1)'_S \ldots (1,w,r+1)'_S \\
& &(1)' \, \ldots \, (r)' \, \, (1)^{r-1} \, \ldots \, (r-1)  \\ 
\\
(1,w,r)_{S}
&\to 
& (1,w+1,r)_{L} \, (1,w,0)_{S} \, (2,w,r)_{C} \\
& &(1,1,r+1)_{S} (1,2,r+1)_S \ldots (1,w,r+1)_S \\
& &(1)^{r} \, \ldots \, (r)  \\ 
\end{array}
$$

\begin{figure}[htb]
\begin{center}
\scalebox{0.55}{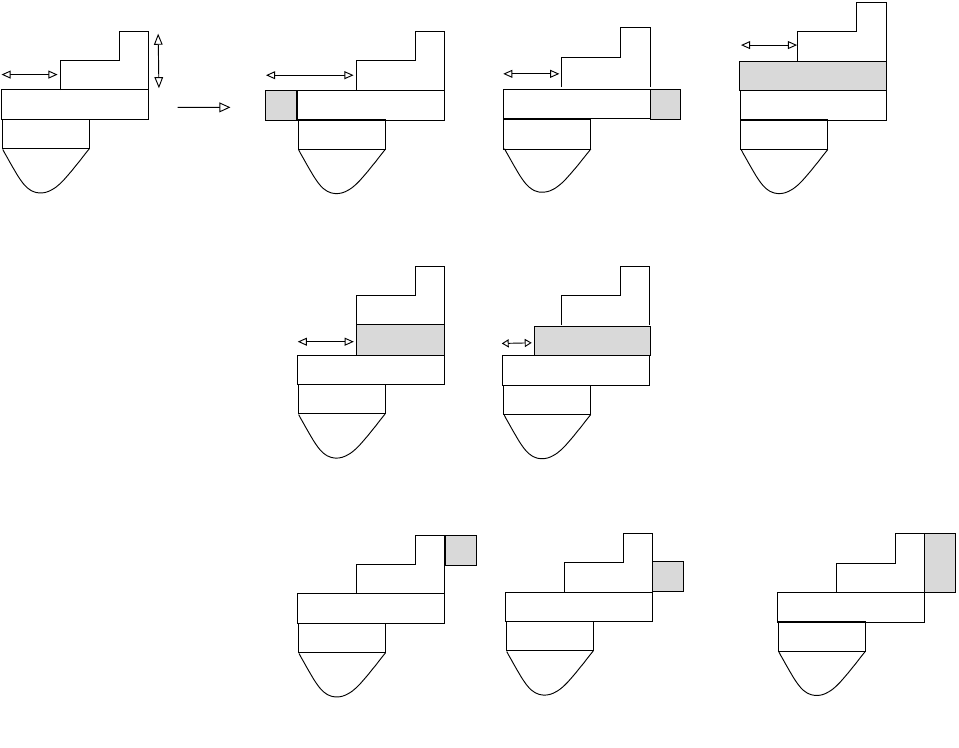}\caption{The growth of a rectangular polyomino in  $S$.}\label{familyS}
\end{center}
\end{figure}

Finally, each non-centered ascending polyomino in $\mathcal{N}$ is labeled by $(r)$ or $(r)'$ according to whether it is non-rectangular or rectangular, respectively.
In this case, the only applicable operation is \emph{Nc}$^{*}$ (see Figure~\ref{fig:ncstar}):
 $$
\begin{array}{lll}
(r) &\to &(1)^r \, \ldots \, (r-1)^2 \, (r) \\
\\
(r)' &\to &(1)' \, \ldots \, (r)' \, (r+1)' \\
& &(1)^{r-1} \, (2)^{r-2} \ldots \,  (r-1) \\
\end{array}
$$

\end{description}

Collecting all productions, the full set of generating tree productions for ascending polyominoes is:
{\scriptsize $$
\left\{\begin{array}{lll}
(1,1,0)'_{L_0} & & \\
\\
(b,w,r)'_C
&\to 
& (1,w+1,r)'_{L} \, (1,1,r+1)'_L \, \ldots \, (1,1,r+b-1)'_L
\\
\\
&\to 
& (1,w,0)_S \, (1,0,0)^{b-1}_{R}  \\
\\
&\to 
& (b+1,w,r)'_C \\
\\
&\to 
& (1)' \, (1)^{r-1} \, \ldots \, (r-1)' \, (r-1) \, (r)'  \\
\\
(b,w,r)_C
&\to 
& (1,w+1,r)_{L} \, (1,1,r+1)_L \, \ldots \, (1,1,r+b-1)_L
\\
\\
&\to 
& (1,w,0)_S \, (1,0,0)^{b-1}_{R}  
 \\
\\
&\to 
& (b+1,w,r)_C  \\
\\
&\to 
& (1)^{r} \, \ldots \, (r-1)^2 \, (r)  \\
\\
(b,w,0)'_{C_0}
&\to 
& (1,w+1,0)'_{L_0} \, (1,1,1)'_L \, \ldots \, (1,1,b-1)'_L 
 \\
\\
&\to 
& (1,w+1,0)'_{S_0} \, (1,0,0)^{b-1}_{R}  
 \\
\\
&\to 
& (b+1,w,0)'_{C_0} \\
\\
(1,w,0)'_{L_0} &\to &(1,w+1,0)'_{L_0} \, \,  (2,w,0)'_{C_0} \\
\\
(1,w,r)'_{L} &\to &(1,w+1,r)'_{L} \, \, (2,w,r)'_{C} \, \, (1)' \, (1)^{r-1} \, \ldots \, (r-1)' \, (r-1) \, (r)' \\
\\
(1,w,r)_{L} &\to &(1,w+1,r)_{L} \, \, (2,w,r)_{C} \, \,  (1)^{r} \, \ldots \, (r-1)^2  \, (r) \\
\\
(1,0,0)_{R} &\to &(1,1,0)_{L} \, \, (1,1,0)_{R} \, \, (2,0,0)_{C_1} \\
\\
(1,w,0)'_{S_0}
&\to & (1,w+1,0)'_{L_0} \, (1,w+1,0)'_{S_0} \, (2,w,0)'_{C_0} \\
& & (1,1,1)'_{S} \, (1,2,1)'_S \, \ldots \, (1,w-1,1)'_S \, ; \\
\\
(1,w,r)'_{S}
&\to 
& (1,w+1,r)'_{L} \, (1,w,0)_{S} \, (2,w,r)'_{C} \\
& &(1,1,r+1)'_{S} (1,2,r+1)'_S \ldots (1,w,r+1)'_S \\
& &(1)' \, \ldots \, (r)' \, \, (1)^{r-1} \, \ldots \, (r-1)  \\ 
\\
(1,w,r)_{S}
&\to 
& (1,w+1,r)_{L} \, (1,w,0)_{S} \, (2,w,r)_{C} \\
& &(1,1,r+1)_{S} (1,2,r+1)_S \ldots (1,w,r+1)_S \\
& &(1)^{r} \, \ldots \, (r)  \\ 
(r) &\to &(1)^r \, \ldots \, (r-1)^2 \, (r) \\
\\
(r)' &\to &(1)' \, \ldots \, (r)' \, (r+1)' \\
& &(1)^{r-1} \, (2)^{r-2} \ldots \,  (r-1) \\
\end{array} \right.
$$}

\section{Solving the functional equations}

Using standard techniques~\cite{gfgt,eco,mbm2,convex}, the generating tree can be translated into a system of functional equations satisfied by the generating functions of the classes of centered and non-centered ascending polyominoes.
Concerning the centered cases, for any given class $K$, we denote by: \[K(x,y,z;t) =K(x,y,z)= \sum_{P \in K} t^{|P|} x^{b(P)} y^{w(P)} z^{r(P)}\]
the generating function of $K$ according to the parameters $b,w,r$ and the size $|P|$ of $P$.
\paragraph{Rectangular Case} From the succession rule above we find:
\begin{align}
    C'_0(x,y) &= tx C'_0(x,y) + tx L'_0(x,y) + tx S'_0(x,y) \label{eqn:C0p} \\[2ex]
    L'_0(x,y) &= t^2xy + txyC'_0(1,y) + tyL'_0(x,y) + tyS'_0(x,y) \label{eqn:L0p} \\[2ex]
    S'_0(x,y) &= txy C'_0(1,y) + ty S'_0(x,y) \label{eqn:S0p} \\[2ex]
    C'(x,y,z) &= tx C'(x,y,z) + tx L'(x,y,z) + tx S'(x,y,z) \label{eqn:Cp} \\[2ex]
    L'(x,y,z) &= \begin{multlined}[t] txy C'(1,y,z) + \frac{txy}{1-z}\left[z C'(1,1,z)-C'(z,1,z)\right] + \frac{txy}{1-z}\left[zC'_0(1,1)-C'_0(z,1)\right] \\ + ty L'(x,y,z) + ty S'(x,y,z) \end{multlined} \label{eqn:Lp} \\[2ex]
    S'(x,y,z) &= \frac{tz}{1-y}\left[yS'_0(x,1)-S'_0(x,y)\right] + \frac{tyz}{1-y}\left[S'(x,1,z)-S'(x,y,z)\right] \label{eqn:Sp} \\[2ex]
    N'(z) &= \begin{multlined}[t] \frac{tz}{1-z}\left[C'(1,1,1) - C'(1,1,z)\right] + \frac{tz}{1-z}\left[L'(1,1,1)-L'(1,1,z)\right] \\ +\frac{tz}{1-z}\left[S'(1,1,1)-S'(1,1,z)\right] + \frac{tz}{1-z}\left[N'(1) - zN'(z)\right] \end{multlined} \label{eqn:Np}
\end{align}
Equations \eqref{eqn:C0p}--\eqref{eqn:S0p} can be solved on their own, giving the (rational) known generating functions of families of stack polyominoes:
\begin{align}
    C'_0(x,y) &= \frac{ t^3 x^2 y(1-t)(1-ty)}{(1-tx) (1 - t - 2 t y + t^2 y^2)}\,, \\
    L'_0(x,y) &= \frac{t^2 x y (1-ty - t)}{1 - t - 2 t y + t^2 y^2}\,, \\
    S'_0(x,y) &= \frac{t^4 x y^2}{1 - t - 2 t y + t^2 y^2}\,.
\end{align}
These solutions can then be substituted into \eqref{eqn:Sp}, leaving $S'(x,y,z)$ and $S'(x,1,z)$ as the only unknowns in that equation. After rearranging, the standard kernel method can be applied, using the kernel root $y=\frac{1}{1-tz}$. This gives:
\begin{equation}
    S'(x,y,z) = \frac{t^5 x y z (1 - t - t^2 y - t z + t^2 z)}{(1 - t - 2 t y + t^2 y^2) (1 - 3 t + t^2 - 2 t z + 4 t^2 z + t^2 z^2 - t^3 z^2)}\,.
\end{equation}
Next, \eqref{eqn:Cp} and \eqref{eqn:Lp} can be combined, and substituting $x=y=1$ or $x=z$ and $y=1$ gives enough equations to solve for $C'$ and $L'$:
\begin{align}
    C'(x,y,z) &= \frac{t^5 x^2 y z (1-t)(1 - t y + t^2 y - t z - t^2 z + t^2 y z)}{(1-tx) (1 - t - 2 t y + t^2 y^2)(1 - 3 t + t^2 - 2 t z + 4 t^2 z + t^2 z^2 - t^3 z^2)}\,, \\
    L'(x,y,z) &= \frac{t^4xyz(1 - 2 t + t^2 - t y + 2 t^2 y - t z + t^2 z + t^2 y z - t^3 y z)}{(1 - t - 2 t y + t^2 y^2)(1 - 3 t + t^2 - 2 t z + 4 t^2 z + t^2 z^2 - t^3 z^2)}\,.
\end{align}
Finally we can substitute everything into \eqref{eqn:Np}, leaving $N'(z)$ and $N'(1)$. The kernel method can be applied again, with $z=\frac{1-\sqrt{1-4t}}{2t}$, giving the solution:
\begin{equation}
    N'(z) = \frac{t^3z}{\sqrt{1-4t} (1 - z + t z^2)} - \frac{t^3 z (1 - t z) (1 - t - t z)}{(1 - z + t z^2)(1 - 3 t + t^2 - 2 t z + 4 t^2 z + t^2 z^2 - t^3 z^2)}\,.
\end{equation}

\paragraph{Non-rectangular case} For brevity we will use the notation:
\[ \partial_{x=1}f(x) = \left[\frac{\partial}{\partial x}f(x)\right]_{x=1} \]
and likewise for $\partial_{z=1}$.
From the succession rules, we have
\begin{align}
    C(x,y,z) &= tx C(x,y,z) + tx L(x,y,z) + tx S(x,y,z), \label{eqn:C} \\[2ex]
    C_1(x) &= tx C_1(x) + tx R(x), \label{eqn:C1} \\[2ex]
    L(x,y,z) &= \begin{multlined}[t] txy C(1,y,z) + \frac{txy}{1-z}\left[zC(1,1,z) - C(z,1,z)\right] + \frac{txy}{1-z}\left[C_1(1) - C_1(z)\right] \\ + ty L(x,y,z) + ty R(x) + ty S(x,y,z), \end{multlined} \label{eqn:L} \\[2ex]
    R(x) &= \begin{multlined}[t] tx \partial_{x=1}C'(x,1,1) - tx C'(1,1,1) +tx \partial_{x=1}C(x,1,1) - tx C(1,1,1) \\ + tx \partial_{x=1}C'_0(x,1) - tx C'_0(1,1) + tx \partial_{x=1}C_1(x) + tR(x), \end{multlined} \label{eqn:R} \\[2ex]
    S(x,y,z) &= t x C'(1,y,1) + tx C(1,y,1) + tS'(x,y,1) + t S(x,y,1) + \frac{tyz}{1-y}\left[S(x,1,z) - S(x,y,z)\right], \label{eqn:S}
\end{align}
and one final equation for $N(z)$ which we will deal with separately.

Note that $L(x,y,z) = xL(1,y,z)$ and similarly $R(x) = xR(1)$ and $S(x,y,z) = xS(1,y,z)$, while $C$ and $C_1$ do not factor so easily. So $x$ is not really required in \eqref{eqn:L}--\eqref{eqn:S}, and we can write:
\begin{align}
    L(1,y,z) &= \begin{multlined}[t] ty C(1,y,z) + \frac{ty}{1-z}\left[zC(1,1,z) - C(z,1,z)\right] + \frac{ty}{1-z}\left[C_1(1) - C_1(z)\right] \\ + ty L(1,y,z) + ty R(1) + ty S(1,y,z), \end{multlined} \label{eqn:L_x1} \\[2ex]
    R(1) &= \begin{multlined}[t] t \partial_{x=1}C'(x,1,1) - t C'(1,1,1) +t \partial_{x=1}C(x,1,1) - t C(1,1,1) \\ + t \partial_{x=1}C'_0(x,1) - t C'_0(1,1) + t \partial_{x=1}C_1(x) + tR(1),\end{multlined} \label{eqn:R_x1} \\[2ex]
    S(1,y,z) &= t C'(1,y,1) + t C(1,y,1) + tS'(1,y,1) + t S(1,y,1) + \frac{tyz}{1-y}\left[S(1,1,z) - S(1,y,z)\right]. \label{eqn:S_x1}
\end{align}
Using the above we find that:
\begin{align}
    C(x,y,z) &= \frac{x^2(1-t)}{1-tx}C(1,y,z), \\
    C_1(x) &= \frac{x^2(1-t)}{1-tx}C_1(1) = \frac{tx^2}{1-tx}R(1).
\end{align}
We then get:
\begin{equation}
    \partial_{x=1}C_1(x) = \frac{t(2-t)}{(1-t)^2}R(1)\, ,
\end{equation}
\begin{equation}
    \partial_{x=1}C(x,y,z) = \frac{2-t}{1-t}C(1,y,z).
\end{equation}
Similar results also apply for $C'$ and $C'_0$.
With these simplifications we no longer need $x$ at all, and after eliminating $C_1(1)$ we get the system:
\begin{align}
    C(1,y,z) &= \frac{t}{1-t}L(1,y,z) + \frac{t}{1-t} S(1,y,z), \label{eqn:C_x1} \\[2ex]
    L(1,y,z) &= \begin{multlined}[t] \frac{tyz}{(1-ty)(1-tz)}C(1,1,z) + \frac{ty}{1-ty}C(1,y,z) + \frac{ty}{(1-t)(1-ty)(1-tz)}R(1) \\ + \frac{ty}{1-ty}S(1,y,z), \end{multlined} \label{eqn:L_x1_subs} \\[2ex]
    R(1) &= \frac{t(1-t)}{1-3t+t^2}C(1,1,1) + \frac{t(1-t)}{1-3t+t^2}C'_0(1,1) + \frac{t(1-t)}{1-3t+t^2}C'(1,1,1), \label{eqn:R_x1_subs} \\[2ex]
    S(1,y,z) &= \begin{multlined}[t] \frac{t(1-y)}{1-y+tyz}C(1,y,1) + \frac{t(1-y)}{1-y+tyz}C'(1,y,1) + \frac{ty}{1-y+tyz}S(1,1,z) \\ +\frac{t(1-y)}{1-y+tyz}S(1,y,1) + \frac{t(1-y)}{1-y+tyz}S'(1,y,1).\end{multlined} \label{eqn:S_x1_subs}
\end{align}
Equations \eqref{eqn:C_x1}--\eqref{eqn:S_x1_subs} can be combined to eventually eliminate all unknowns except $S(1,y,z)$, $S(1,y,1)$ and $S(1,1,1)$. The resulting equation looks like:
\begin{multline}
    (1 - y + t y^2) (1 - y + t y z)S(1,y,z) - t y (1 - y + t y^2) z S(1,1,z) \\ - \frac{t^2 (1-y) y (1 - 3 t + t^2 - t y + 2 t^2 y) }{1 - 5 t + 6 t^2 - t^3}S(1,1,1) = \text{RHS}, \label{eqn:all_x1_subs}
\end{multline}
where the RHS depends only on $C'_0(1,1)$, $C'(1,1,1)$, $C'(1,y,1)$ and $S'(1,y,1)$ (which are all known).

We apply the kernel method twice to \eqref{eqn:all_x1_subs}, first setting $y=\frac{1}{1-tz}$ which eliminates the $S(1,y,z)$ term, and then with $z=\frac{1-\sqrt{1-4t}}{2t}$ which eliminates the $S(1,1,z)$ term. This gives an algebraic solution to $S(1,1,1)$, which can then be substituted back to give $S(1,1,z)$. (We do not need the $y$ variable when solving for $N$.) This is a fairly large expression, but at $z=1$ we have:
\begin{equation}
    S(1,1,1) = \frac{t (1 - 5 t + 6 t^2 - t^3)}{2 (1 - 2 t)\sqrt{1-4t}} - \frac{t (1 - 8 t + 23 t^2 - 28 t^3 + 14 t^4 - 4 t^5 + t^6)}{2 (1 - 2 t) (1 - 5 t + 6 t^2 - t^3)}\,.
\end{equation}
We can then set $y=z=1$ in \eqref{eqn:C_x1}--\eqref{eqn:R_x1_subs} to get three equations in order to solve $R(1)$, and hence $C_1(1)$:
\begin{align}
    R(1) &= \frac{t^3(1-t)}{2(1-2t)\sqrt{1-4t}} - \frac{t^3(1-t)}{2(1-2t)}\,, \\
    C_1(1) &= \frac{t^4}{2(1-2t)\sqrt{1-4t}} - \frac{t^4}{2(1-2t)}\,.
\end{align}

Then we go back again and set $y=1$ to get two equations which give solutions to $C(1,1,z)$ and $L(1,1,z)$. Again we get fairly large expressions, but at $z=1$
\begin{align}
    C(1,1,1) &= \frac{t^2 (1 - 3 t + t^2)}{2 (1 - 2 t) \sqrt{1-4t}} - \frac{t^2 (1 - 6 t + 12 t^2 - 8 t^3 + t^4 - t^5)}{2 (1 - 2 t) (1 - 5 t + 6 t^2 - t^3)}\,, \\
    L(1,1,1) &= \frac{t^2}{2\sqrt{1-4t}} - \frac{t^2 (1 - 3 t + 2 t^2 - t^3)}{2 (1 - 5 t + 6 t^2 - t^3)}\,.
\end{align}

Finally we turn to $N(z)$. Let us define the operators $\Delta_1$ and $\Delta_2$ by:
\begin{align}
    \Delta_1 f(z) &= \frac{tz}{1-z} \partial_{z=1}f(z) - \frac{tz}{(1-z)^2}\left[f(1)-f(z)\right], \\
    \Delta_2 f(z) &= \frac{tz}{1-z} \partial_{z=1}f(z) - \frac{tz^2}{(1-z)^2}\left[f(1)-f(z)\right].
\end{align}
Then the succession rule gives:
\begin{multline}
    N(z) = \Delta_1 C'(1,1,z) + \Delta_2 C(1,1,z) + \Delta_1 L'(1,1,z) + \Delta_2 L(1,1,z)  \\ + \Delta_1 S'(1,1,z) + \Delta_2 S(1,1,z) + \Delta_1 N'(z) + \Delta_2 N(z). \label{eqn:N}
\end{multline}
The only unknowns here are $N(z)$, $N(1)$ and $\partial_{z=1}N(z)$.

Combining the $N(z)$ terms gives the kernel $1-\frac{tz^2}{(1-z)^2}$, which has two Puiseux series as roots:
\begin{align}
    Z_\pm &= \frac{1 \pm \sqrt{t}}{1-t} \\
    &= 1 \pm t^{1/2} + t \pm t^{3/2} + t^2 \pm t^{5/2} + t^3 \pm t^{7/2} + t^4 \pm t^{9/2} + t^5 \pm t^{11/2} + t^6 + \dots
\end{align}
These can both be substituted into \eqref{eqn:N} to give two equations in $N(1)$ and $\partial_{z=1}N(z)$, which fortunately turn out to be linearly independent. Combining them gives the solution:
\begin{equation}
    N(1) = \frac{t (1 - 10 t + 31 t^2 - 16 t^3 - 68 t^4 + 90 t^5 - 27 t^6 + 4 t^7)}{2 (1 - 4 t)^2 (1 - 5 t + 6 t^2 - t^3)} - \frac{t (1 - 5 t + 2 t^2 + 13 t^3 - 8 t^4)}{2(1-2t)(1-4t)^{3/2}}\,.
\end{equation}

This completes the solution of the generating functions. 

\subsection{Generating functions of some families of ascending polyominoes}

From the generating functions obtained above we can compute generating functions of several families of ascending polyominoes, starting from centered ascending ones. 

\begin{theorem}
 For $n\geq 0$, the number $h(n)$ of centered ascending polyominoes is:
 \begin{equation}\label{eq:an}
     % h(n+2)=\frac{3n+1}{n+1}\binom{2n-1}{n} \, .
     h(n) = \frac{3n-5}{n-1}\binom{2n-5}{n-2}\, .
 \end{equation}
\end{theorem}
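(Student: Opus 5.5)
The approach is to compute the generating function $H(t)=\sum_n h(n)t^n$ of centered ascending polyominoes as a sum of the class generating functions already obtained, and then to match it against the generating function of the claimed numbers. Centered ascending polyominoes are exactly the disjoint union of the classes $C_0,C,C_1,L_0,L,R,S_0,S$ (rectangular and non-rectangular), so
\[
H(t)=C'_0(1,1)+L'_0(1,1)+S'_0(1,1)+C'(1,1,1)+L'(1,1,1)+S'(1,1,1)+C(1,1,1)+C_1(1)+L(1,1,1)+R(1)+S(1,1,1).
\]
All eleven terms are explicit from the previous section.
\begin{itemize}
\item The rectangular ones are rational: the stack-type functions $C'_0,L'_0,S'_0$ at $x=y=1$, and $C',L',S'$ at $x=y=z=1$.
\item The non-rectangular ones $C(1,1,1)$, $C_1(1)$, $L(1,1,1)$, $R(1)$, $S(1,1,1)$ each split as a part proportional to $1/\sqrt{1-4t}$ plus a rational part.
\end{itemize}

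Next I determine the target series. Put $m=n-2$, so the claim reads $h(m+2)=\frac{3m+1}{m+1}\binom{2m-1}{m}$. Using $\binom{2m-1}{m}=\tfrac12\binom{2m}{m}$ for $m\ge1$ and $3m+1=3(m+1)-2$, this becomes
\[
h(m+2)=\tfrac32\binom{2m}{m}-\frac{1}{m+1}\binom{2m}{m}\qquad(m\ge1).
\]
For $m=0$ the convention $\binom{-1}{0}=1$ gives $h(2)=1$, the single cell. Summing, the statement is equivalent to
\[
H(t)=t^2\left(\frac{3}{2\sqrt{1-4t}}-\frac{1-\sqrt{1-4t}}{2t}+\frac12\right).
\]
As a sanity check, the small values $h(2)=1$, $h(3)=2$, $h(4)=7$ agree with direct enumeration.

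It then remains to verify the identity. I would add the coefficients of $1/\sqrt{1-4t}$ in the five non-rectangular terms and reduce them to $\tfrac32 t^2$ plus whatever $\sqrt{1-4t}$-contribution is needed. Then I would add all rational pieces and check that they reduce to $-\tfrac{t}{2}+\tfrac{t^2}{2}$.

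The main obstacle is this rational part. The factors $1-5t+6t^2-t^3$ appearing in $S(1,1,1)$, $C(1,1,1)$ and $L(1,1,1)$, and the factor $1-2t$, must cancel completely, together with the denominators $1-3t+t^2$ and $1-t-2t+t^2=1-3t+t^2$ coming from the rectangular terms at $y=z=1$. I would carry out this cancellation with a common denominator, by computer algebra or by hand, grouping terms class by class.

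As a cross-check I would confirm that $H$ satisfies the quadratic equation satisfied by the right-hand side. Alternatively, I would compare enough initial coefficients: both sides lie in $\mathbb{Q}(t)+\mathbb{Q}(t)\sqrt{1-4t}$ with bounded-degree numerators and denominators, so sufficiently many matching coefficients force equality. Coefficient extraction then yields $h(n)$ as stated.
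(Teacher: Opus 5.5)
Your proposal is correct and is essentially the paper's proof: summing the eleven centered class generating functions (everything except $N$ and $N'$) gives $H(t)=\frac{t(1-t)}{2\sqrt{1-4t}}-\frac{t(1-t)}{2}$. This is exactly your target $t^2\bigl(\frac{3}{2\sqrt{1-4t}}-\frac{1-\sqrt{1-4t}}{2t}+\frac12\bigr)$ rewritten, and the cancellation you anticipate does occur (the $1/\sqrt{1-4t}$-coefficients add up to $\frac{t(1-t)}{2}$ and the rational parts to $-\frac{t(1-t)}{2}$). Your explicit coefficient extraction, valid for $n\ge 2$ (the stated formula is undefined at $n=1$), fills in the step the paper leaves as ``hence we have the closed formula''.
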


\proof 
Adding up everything except $N$ and $N'$ gives the centered ascending polyominoes:
\begin{multline}
    H(t) = \frac{t(1-t)}{2\sqrt{1-4t}} - \frac{t(1-t)}{2} \\ = t^2 + 2 t^3 + 7 t^4 + 25 t^5 + 91 t^6 + 336 t^7 + 1254 t^8 + 
 4719 t^9 + 17875 t^{10} + \dots
\end{multline}
Hence we have the closed formula. \qed

This is sequence A097613 in the OEIS \cite{oeis}.

\begin{comment}
Recall that the Catalan number is defined by
\[
C(n) = \frac{1}{n+1}\binom{2n}{n}.
\]

\begin{corollary}\label{prop:cat}
For $n\geq 0$, we have:
\[
 \; e(n+2) = \frac{3n+1}{2}\,C({n}) \; = 3 \, \binom{2n-1}{n} - C(n).
\]
\end{corollary}
The coefficients here are
\begin{equation}
    c(n) = \begin{cases} 
    1, & n=2 \\
    \frac12\left(\binom{2n-2}{n-1} - \binom{2n-4}{n-2}\right), & n \geq 3 \end{cases}
\end{equation}
Alternatively from the OEIS (sequence A097613) we can get the formula
\begin{equation}
    c(n) = \binom{2n-5}{n-2} + \binom{2n-4}{n-3}, \qquad n \geq 2.
\end{equation}
\end{comment}

\begin{theorem}
The number of rectangular polyominoes of size $n$ is $\binom{2n-4}{n-2}$.    
\end{theorem}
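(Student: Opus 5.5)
The plan is to prove the statement in two independent ways: an algebraic verification from the generating functions already computed, and a direct bijective argument that explains the answer. Here ``rectangular polyominoes'' means rectangular \emph{ascending} polyominoes, centered or not.

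For the algebraic route, the generating function of rectangular ascending polyominoes is the sum of all primed series evaluated at $x=y=z=1$:
\[
C'_0(1,1)+L'_0(1,1)+S'_0(1,1)+C'(1,1,1)+L'(1,1,1)+S'(1,1,1)+N'(1).
\]
All seven terms are known in closed form from the previous section. The key steps are:
\begin{itemize}
\item evaluate the rational pieces at $x=y=z=1$;
\item evaluate $N'(1)$, where $1-z+tz^2$ becomes $t$ and the second factor becomes $1-5t+6t^2-t^3$;
\item check that all rational parts cancel, so that the total equals $t^2/\sqrt{1-4t}$.
\end{itemize}
Since $t^2/\sqrt{1-4t}=\sum_{n\ge2}\binom{2n-4}{n-2}t^n$, this gives the claim. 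The only real obstacle is the simplification itself. I expect the terms with denominator $1-5t+6t^2-t^3$ (coming from $1-3t+t^2-2tz+\dots$ at $z=1$) to cancel between $N'(1)$ and the centered terms. I would let a computer algebra system confirm this, and check the first coefficients $1,2,6,20,70$.

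To explain the result combinatorially, I would also give a structural argument. First, a convex polyomino is rectangular ascending if and only if it contains the north-east cell of its bounding rectangle. One direction is immediate: rectangular means the rightmost column reaches the maximal height, so the top-right corner cell belongs to $P$.

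For the converse, suppose a convex $P$ contains its top-right corner cell. By convexity the right endpoints $R(\cdot)$ of the rows, read bottom to top, are unimodal. Since the top row attains the maximal abscissa, $R$ is weakly increasing upward. Hence for two rows $r_1$ below $r_2$ we have $R(r_1)\le R(r_2)$.
\begin{itemize}
\item If $L(r_1)<L(r_2)$, then either $R(r_1)<R(r_2)$, which gives $r_1\nearrow r_2$, or $R(r_1)=R(r_2)$, which gives $r_2\preceq r_1$.
\item If $L(r_1)\ge L(r_2)$, then $r_1\preceq r_2$.
\end{itemize}
So $P$ satisfies the characterization of Proposition~\ref{def:neskew}, and it is ascending and rectangular.

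Finally, rotating by $180^\circ$ maps convex polyominoes containing the top-right corner bijectively onto directed-convex polyominoes, i.e.\ those containing the bottom-left corner, and the rotation preserves semi-perimeter. It is classical~\cite{mbm2} that directed-convex polyominoes of semi-perimeter $n$ are counted by $\binom{2n-4}{n-2}$. This concludes the proof, and the algebraic computation serves as a consistency check of the generating-tree equations.
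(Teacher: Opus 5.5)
Your proposal is correct and follows essentially the same route as the paper. The paper also sums the seven primed series. They do collapse to $t^2/\sqrt{1-4t}$: all rational parts cancel, as you predict. The paper then explains the count by the $180^\circ$ rotation onto directed-convex polyominoes.

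The one difference is that you justify, via Proposition~\ref{def:neskew} and the monotonicity of the right endpoints, that rectangular ascending polyominoes are exactly the convex polyominoes containing the north-east corner of their bounding box. The paper only asserts this, so your bijective argument stands on its own as a proof.
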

\proof Adding up all the rectangular cases gives:
\begin{equation}
    \frac{t^2}{\sqrt{1-4t}}  = t^2 + 2 t^3 + 6 t^4 + 20 t^5 + 70 t^6 + 252 t^7 + 924 t^8 + 
 3432 t^9 + 12870t^{10} + \cdots
\end{equation}
This has a simple explanation once we observe that rectangular polyominoes are exactly directed-convex polyominoes up to a rotation of $180^\circ$ counterclockwise. According to \cite{mbm2,DV,convex}, the statement follows. \qed

Adding up everything gives all ascending polyominoes:

\begin{theorem}
The generating function of ascending polyominoes is:
\begin{multline}
    A(t) = \frac{t^2 (2 - 12 t + 19 t^2 - 4 t^3)}{2(1-4t)^2} - \frac{t^4(5-8t)}{2(1-2t)(1-4t)^{3/2}} \\
    = t^2 + 2 t^3 + 7 t^4 + 26 t^5 + 101 t^6 + 404 t^7 + 1649 t^8 + 
 6824 t^9 + 28498t^{10} + \dots
\end{multline}
\end{theorem}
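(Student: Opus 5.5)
The plan is to obtain $A(t)$ by summing the generating functions of all classes of the generating tree. The second theorem of the previous section shows that the tree generates every ascending polyomino exactly once, starting from the root $(1,1,0)'_{L_0}$. So
\[
A(t)=H(t)+N'(1)+N(1),
\]
where $H(t)$ is the total contribution of the centered classes. That is,
\[
H(t)=C_0'(1,1)+L_0'(1,1)+S_0'(1,1)+C'(1,1,1)+L'(1,1,1)+S'(1,1,1)+C(1,1,1)+C_1(1)+L(1,1,1)+R(1)+S(1,1,1).
\]
The closed form $H(t)=\frac{t(1-t)}{2\sqrt{1-4t}}-\frac{t(1-t)}{2}$ was already obtained in the proof of the formula for $h(n)$. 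Only the two non-centered series $N'(1)$ and $N(1)$ remain to be evaluated and added.

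For $N'(1)$ I would set $z=1$ in the explicit kernel-method solution for $N'(z)$. We have $1-z+tz^2=t$ at $z=1$, and the second denominator factor becomes
\[
1-5t+6t^2-t^3=(1-t)(1-4t)+\cdots,
\]
which I will simplify directly. The first term gives $\frac{t^2}{\sqrt{1-4t}}$. The second gives
\[
-\frac{t^2(1-t)(1-2t)}{1-5t+6t^2-t^3}.
\]
As a sanity check, the total rectangular series $C_0'+L_0'+S_0'+C'+L'+S'+N'$ at unit arguments must equal $t^2/\sqrt{1-4t}$, by the theorem on rectangular polyominoes. That check confirms $N'(1)$ independently. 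For $N(1)$ I take the displayed expression obtained from the two Puiseux kernel roots $Z_\pm$.

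Finally I would add $H$, $N'(1)$ and $N(1)$ and simplify. The rational parts over $(1-4t)^2(1-5t+6t^2-t^3)$ should cancel the cubic factor $1-5t+6t^2-t^3$ entirely. That factor is spurious, since it must disappear by the $n4^n$ growth and by the form of the claimed answer. The irrational parts are put over the common denominator $(1-2t)(1-4t)^{3/2}$, multiplying the $\sqrt{1-4t}$ terms by $(1-4t)(1-2t)$. The main obstacle is this bookkeeping: the cancellation of the cubic factor, and the collapse of the numerator of the algebraic part to $-t^4(5-8t)$ after the $\frac{t(1-t)}{2}$ and $\frac{t^2}{\sqrt{1-4t}}$ contributions are absorbed. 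I would carry this out by checking that the claimed $A(t)$ minus $H+N'(1)+N(1)$ has zero rational and zero irrational parts separately; this is legitimate because $1$ and $\sqrt{1-4t}$ are linearly independent over $\mathbb{Q}(t)$.

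As an independent cross-check I would compare the first coefficients $1,2,7,26,101,404,\dots$ with a direct enumeration of small ascending polyominoes. For this I would use the characterization of Proposition~\ref{def:neskew}. I would also check consistency with $c(n)=2a(n)+k(n)-\ell(n)$, which requires $k(n)\ge 0$.
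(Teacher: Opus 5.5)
Your proposal is correct and takes the paper's route: the paper also obtains $A(t)$ by adding $H(t)$, $N'(1)$ and $N(1)$. Your bookkeeping closes: the irrational parts combine to $-\frac{t^4(5-8t)}{2(1-2t)(1-4t)^{3/2}}$, and the rational numerator over $2(1-4t)^2(1-5t+6t^2-t^3)$ equals $t^2(2-12t+19t^2-4t^3)(1-5t+6t^2-t^3)$, so the cubic does cancel. However, your heuristic that it \emph{must} cancel because of the $n4^n$ growth is invalid, since its smallest root is $t\approx 0.308>1/4$ and would only give a subdominant term; the cancellation rests entirely on the explicit computation you propose, which does confirm it.
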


A (not so nice) closed formula for the number $a(n)$ can be derived from the generating function. Even without this, we can show that the number of ascending polyominoes grows asymptotically as: 
\[ a(n) \sim
4^n \, 
\frac{n}{256} 
\]
From the generating function we observe that the dominant asymptotic growth
$a(n) \sim K \,n\,4^n$ is determined by the rational part of the generating function
(the left-hand term), whose double pole at $x=\tfrac14$ produces the $n4^n$ factor.
The algebraic part contributes only lower-order terms and, in particular,
yields a negative correction to the leading asymptotics. The same phenomenon occurs for convex polyominoes (see Introduction), where the rational part is responsible for the dominant asymptotic behaviour.

\subsection{The number of $Z$-convex polyominoes (reprise)}

With the above results, we can refine the enumeration of $Z$-convex polyominoes. 

\paragraph{The class $\mcC(2,2)$} We recall from Section \ref{sec:definitions} that convex polyominoes $\cal C$ are obtained as the union:
\[ {\cal C}= \mathcal{A} \cup \mathcal{D} \cup \mcC(2,2) \, ,\]
and that the intersection of $\mathcal{A}$ and $\mathcal{D}$ gives $L$-convex polyominoes. Hence, the generating function $C_{2,2}(t)$ of $\mcC(2,2)$ is obtained as:
\[ C_{2,2}(t)=C(t)-2A(t)+L(t),\]
where $C(t)$ (resp. $A(t)$, $L(t)$) is the generating function of convex (resp. ascending, $L$-convex) polyominoes. 
Therefore we have: 
\begin{align}
C_{2,2}(t)&= \frac{t^4}{\left(1-2 t\right) \left(1-4 t\right)^{\frac{3}{2}}} - \frac{t^4}{\left(1-4 t\right) \left(2 t^{2}-4 t+1\right)} \\
&= 2 t^{8}+32 t^{9}+308 t^{10}+2320 t^{11}+15094 t^{12}+89104 t^{13}+491012 t^{14} + \ldots \notag 
\end{align}
Hence, the asymptotic growth of polyominoes in $\mcC(2,2)$ is: 
\[
c_{2,2}(n) \sim \frac{1}{64\sqrt{\pi}}\sqrt{n} 4^n \, ,
\]
which is the rough asymptotic behaviour of $4$-stacks (we recall that $\mcC(2,2)$ is included in $4$-stacks).

\paragraph{The class $\mcC(2,1)$} We recall from Section \ref{sec:definitions} that $Z$-convex polyominoes are the disjoint union:
\begin{center}
$Z$-convex $= \mcC(2,1) \cup \mcC(1,2) \cup \mcC(2,2) \cup \mbox{$L$-convex}$
\end{center}
Therefore the generating function $C_{2,1}(t)$ of $\mcC(2,1)$ is obtained as:
\[ C_{2,1}(t)= \frac{1}{2} \, \left( Z(t) - C_{2,2}(t) - L(t) \right) ,\]
Precisely, 
{\small
\begin{align}
C_{2,1}(t) &=  \frac{\left(8 t^{3}-15 t^{2}+10 t-2\right) t^{4}}{2 \left(1-t\right) \left(1-3 t\right) \left(1-4 t\right)^{\frac{3}{2}} \left(1-2 t\right)} - \frac{  t^4 \left(8 t^{5}-6 t^{4}+4 t^{3}-25 t^{2}+14 t-2\right) }{2 \left(2 t^{2}-4 t+1\right)  \left(1-4 t\right)^2 \left(1-3 t\right) \left(1-t\right)}
\\
&= 2 t^{5}+17 t^{6}+102 t^{7}+532 t^{8}+2576 t^{9}+11919 t^{10}+53504 t^{11}+235115 t^{12}+1017218 t^{13} + \ldots \notag 
\end{align}}
% As in the case of ascending polyominoes, the algebraic part (the second term of the sum) brings a negative contribution. 

In particular, we observe that the sequence counting polyominoes in $\mcC(2,1)$
has asymptotic growth of the form $c_{2,1}(n) \sim \frac{1}{768}n4^n$.
We can now complete Table~\ref{tab1} in the Introduction, with the rough asymptotic behaviour of the sequences of the classes studied in the paper:

\begin{table}[htb]
\begin{center}
\begin{tabular}{l|c|c|c}
  {\bf Class of polyominoes} &{\bf Type of g.f.} & \textbf{OEIS entry} &{\bf Asymptotic growth} \\
  \hline
   & & \\
 {\bf Centered ascending} & algebraic & \href{https://oeis.org/A097613}{A097613} &  $\frac{3}{32} \;  \frac{4^n}{\sqrt{\pi n}}$ \\
   & & \\
 {\bf Ascending} & algebraic & New &  $\frac{n}{256} \;  4^n$ \\
   & & \\
 {\bf $\mcC(2,2)$} & algebraic & New & $\frac{1}{64\sqrt{\pi}}\sqrt{n}\; 4^n$ \\
  & & \\
 {\bf $\mcC(2,1)$} & algebraic & New & $\frac{n}{768} \; 4^n$ \\
  & & \\
  {\bf $Z$-convex} & algebraic & \href{https://oeis.org/A128611}{A128611} & $\frac{n}{384}\;  4^n$ \\
\end{tabular}
\end{center}
\caption{The asymptotic behavior of the classes studied in the paper. The quantities in the last column correspond to the number of polyominoes of size $n$.}
\end{table}

\paragraph{Final Remarks.}

Building on known enumerative results, Marc Noy in 2005 (personal communication) pointed out the problem of identifying a class of convex polyominoes lying between the families of $L$-convex and $Z$-convex polyominoes, whose generating function is rational and whose asymptotic growth is $n4^n$.
Our results tell us that the family we are seeking must be contained in the set
\[
\mcC(2,1)\setminus \{\text{4-stacks}\}.
\]
or equivalently, in $
\mcC(1,2)\setminus \{\text{4-stacks}\}.
$

\section*{Acknowledgements}

The authors would like to thank Enrica Duchi and Gilles Schaeffer 
for their valuable advice and support during the long development of this work.

%\bibitem{noy}
%M. Noy, Some comments on 2-convex polyominoes, Personal communication.

\end{document}

%% file: c221.pdftex_t
\begin{picture}(0,0)%
\includegraphics{c221.pdf}%
\end{picture}%
%
%  Created by WinFIG version 2024.2 
%  METADATA <version>1.0</version> 
%
\setlength{\unitlength}{3947sp}%
\begin{picture}(7246,2708)(934,-4542)
%  METADATA <id>34</id> 
\put(7876,-3729){\makebox(0,0)[lb]{\smash{\fontsize{12}{14.4}\usefont{T1}{ptm}{m}{n}{\color[rgb]{0,0,0}$\gamma$}%
}}}
%  METADATA <id>11</id> 
\put(2102,-2176){\makebox(0,0)[lb]{\smash{\fontsize{12}{14.4}\usefont{T1}{ptm}{m}{n}{\color[rgb]{0,0,0}$\delta$}%
}}}
%  METADATA <id>9</id> 
\put(3489,-2806){\makebox(0,0)[lb]{\smash{\fontsize{12}{14.4}\usefont{T1}{ptm}{m}{n}{\color[rgb]{0,0,0}$\beta$}%
}}}
%  METADATA <id>8</id> 
\put(1134,-3699){\makebox(0,0)[lb]{\smash{\fontsize{12}{14.4}\usefont{T1}{ptm}{m}{n}{\color[rgb]{0,0,0}$\alpha$}%
}}}
%  METADATA <id>10</id> 
\put(3009,-4343){\makebox(0,0)[lb]{\smash{\fontsize{12}{14.4}\usefont{T1}{ptm}{m}{n}{\color[rgb]{0,0,0}$\gamma$}%
}}}
%  METADATA <id>28</id> 
\put(6901,-2206){\makebox(0,0)[lb]{\smash{\fontsize{12}{14.4}\usefont{T1}{ptm}{m}{n}{\color[rgb]{0,0,0}$\beta$}%
}}}
%  METADATA <id>25</id> 
\put(5514,-2784){\makebox(0,0)[lb]{\smash{\fontsize{12}{14.4}\usefont{T1}{ptm}{m}{n}{\color[rgb]{0,0,0}$\delta$}%
}}}
%  METADATA <id>31</id> 
\put(5994,-4351){\makebox(0,0)[lb]{\smash{\fontsize{12}{14.4}\usefont{T1}{ptm}{m}{n}{\color[rgb]{0,0,0}$\alpha$}%
}}}
\end{picture}%

%% file: c22.pdftex_t
\begin{picture}(0,0)%
\includegraphics{c22.pdf}%
\end{picture}%
%
%  Created by WinFIG version 2024.2 
%  METADATA <version>1.0</version> 
%
\setlength{\unitlength}{3947sp}%
\begin{picture}(7984,7504)(740,-6928)
%  METADATA <id>129</id> 
\put(1554,-4629){\makebox(0,0)[lb]{\smash{\fontsize{18}{21.6}\usefont{T1}{ptm}{m}{n}{\color[rgb]{0,0,0}$\alpha$}%
}}}
%  METADATA <id>133</id> 
\put(4786,-474){\makebox(0,0)[lb]{\smash{\fontsize{18}{21.6}\usefont{T1}{ptm}{m}{n}{\color[rgb]{0,0,0}$r_\delta$}%
}}}
%  METADATA <id>135</id> 
\put(6174,-4194){\makebox(0,0)[lb]{\smash{\fontsize{18}{21.6}\usefont{T1}{ptm}{m}{n}{\color[rgb]{0,0,0}$r_\alpha$}%
}}}
%  METADATA <id>136</id> 
\put(5109,-6106){\makebox(0,0)[lb]{\smash{\fontsize{18}{21.6}\usefont{T1}{ptm}{m}{n}{\color[rgb]{0,0,0}$r_\gamma$}%
}}}
%  METADATA <id>137</id> 
\put(4839,-1329){\makebox(0,0)[lb]{\smash{\fontsize{18}{21.6}\usefont{T1}{ptm}{m}{n}{\color[rgb]{0,0,0}$r_\epsilon$}%
}}}
%  METADATA <id>138</id> 
\put(4801,-5214){\makebox(0,0)[lb]{\smash{\fontsize{18}{21.6}\usefont{T1}{ptm}{m}{n}{\color[rgb]{0,0,0}$r_\zeta$}%
}}}
%  METADATA <id>139</id> 
\put(1576,-3601){\makebox(0,0)[lb]{\smash{\fontsize{18}{21.6}\usefont{T1}{ptm}{m}{n}{\color[rgb]{0,0,0}$c_\alpha$}%
}}}
%  METADATA <id>140</id> 
\put(7726,-3121){\makebox(0,0)[lb]{\smash{\fontsize{18}{21.6}\usefont{T1}{ptm}{m}{n}{\color[rgb]{0,0,0}$c_\beta$}%
}}}
%  METADATA <id>130</id> 
\put(7756,-1937){\makebox(0,0)[lb]{\smash{\fontsize{18}{21.6}\usefont{T1}{ptm}{m}{n}{\color[rgb]{0,0,0}$\beta$}%
}}}
%  METADATA <id>131</id> 
\put(3496,-474){\makebox(0,0)[lb]{\smash{\fontsize{18}{21.6}\usefont{T1}{ptm}{m}{n}{\color[rgb]{0,0,0}$\delta$}%
}}}
%  METADATA <id>132</id> 
\put(6301,-6144){\makebox(0,0)[lb]{\smash{\fontsize{18}{21.6}\usefont{T1}{ptm}{m}{n}{\color[rgb]{0,0,0}$\gamma$}%
}}}
%  METADATA <id>134</id> 
\put(2754,-2357){\makebox(0,0)[lb]{\smash{\fontsize{18}{21.6}\usefont{T1}{ptm}{m}{n}{\color[rgb]{0,0,0}$r_\beta$}%
}}}
%  METADATA <id>141</id> 
\put(2896,-3354){\makebox(0,0)[lb]{\smash{\fontsize{18}{21.6}\usefont{T1}{ptm}{m}{n}{\color[rgb]{0,0,0}$\cal R$}%
}}}
\end{picture}%

%% file: label_r.pdftex_t
\begin{picture}(0,0)%
\includegraphics{label_r.pdf}%
\end{picture}%
%
%  Created by WinFIG version 2024.2 
%  METADATA <version>1.0</version> 
%
\setlength{\unitlength}{3947sp}%
\begin{picture}(17224,4864)(742,-4958)
%  METADATA <id>79</id> 
\put(12100,-1865){\makebox(0,0)[lb]{\smash{\fontsize{22}{26.4}\usefont{T1}{ptm}{m}{n}{\color[rgb]{0,0,0}$\ell$}%
}}}
%  METADATA <id>9</id> 
\put(757,-2326){\makebox(0,0)[lb]{\smash{\fontsize{22}{26.4}\usefont{T1}{ptm}{m}{n}{\color[rgb]{0,0,0}$b$}%
}}}
%  METADATA <id>13</id> 
\put(1702,-1145){\makebox(0,0)[lb]{\smash{\fontsize{22}{26.4}\usefont{T1}{ptm}{m}{n}{\color[rgb]{0,0,0}$w$}%
}}}
%  METADATA <id>34</id> 
\put(3151,-1201){\makebox(0,0)[lb]{\smash{\fontsize{24}{28.8}\usefont{T1}{ptm}{m}{it}{\color[rgb]{0,0,0}$\neq \emptyset$}%
}}}
%  METADATA <id>40</id> 
\put(5122,-1167){\makebox(0,0)[lb]{\smash{\fontsize{22}{26.4}\usefont{T1}{ptm}{m}{n}{\color[rgb]{0,0,0}$r$}%
}}}
%  METADATA <id>54</id> 
\put(2889,-4832){\makebox(0,0)[lb]{\smash{\fontsize{20}{24}\usefont{T1}{ptm}{m}{it}{\color[rgb]{0,0,0}(a)}%
}}}
%  METADATA <id>62</id> 
\put(5482,-2341){\makebox(0,0)[lb]{\smash{\fontsize{22}{26.4}\usefont{T1}{ptm}{m}{n}{\color[rgb]{0,0,0}$b$}%
}}}
%  METADATA <id>64</id> 
\put(6427,-1160){\makebox(0,0)[lb]{\smash{\fontsize{22}{26.4}\usefont{T1}{ptm}{m}{n}{\color[rgb]{0,0,0}$w$}%
}}}
%  METADATA <id>65</id> 
\put(7876,-1216){\makebox(0,0)[lb]{\smash{\fontsize{24}{28.8}\usefont{T1}{ptm}{m}{it}{\color[rgb]{0,0,0}$\neq \emptyset$}%
}}}
%  METADATA <id>67</id> 
\put(9922,-1003){\makebox(0,0)[lb]{\smash{\fontsize{22}{26.4}\usefont{T1}{ptm}{m}{n}{\color[rgb]{0,0,0}$r$}%
}}}
%  METADATA <id>68</id> 
\put(7614,-4847){\makebox(0,0)[lb]{\smash{\fontsize{20}{24}\usefont{T1}{ptm}{m}{it}{\color[rgb]{0,0,0}(b)}%
}}}
%  METADATA <id>35</id> 
\put(16306,-1628){\makebox(0,0)[lb]{\smash{\fontsize{24}{28.8}\usefont{T1}{ptm}{m}{it}{\color[rgb]{0,0,0}$\neq \emptyset$}%
}}}
%  METADATA <id>28</id> 
\put(14440,-2630){\makebox(0,0)[lb]{\smash{\fontsize{22}{26.4}\usefont{T1}{ptm}{m}{n}{\color[rgb]{0,0,0}$b$}%
}}}
%  METADATA <id>30</id> 
\put(16352,-2338){\makebox(0,0)[lb]{\smash{\fontsize{22}{26.4}\usefont{T1}{ptm}{m}{n}{\color[rgb]{0,0,0}$\ell$}%
}}}
%  METADATA <id>70</id> 
\put(16357,-4846){\makebox(0,0)[lb]{\smash{\fontsize{20}{24}\usefont{T1}{ptm}{m}{it}{\color[rgb]{0,0,0}(d)}%
}}}
%  METADATA <id>80</id> 
\put(12123,-4845){\makebox(0,0)[lb]{\smash{\fontsize{20}{24}\usefont{T1}{ptm}{m}{it}{\color[rgb]{0,0,0}(c)}%
}}}
%  METADATA <id>77</id> 
\put(10188,-2157){\makebox(0,0)[lb]{\smash{\fontsize{22}{26.4}\usefont{T1}{ptm}{m}{n}{\color[rgb]{0,0,0}$b$}%
}}}
\end{picture}%

%% file: lcell_r.pdftex_t
\begin{picture}(0,0)%
\includegraphics{lcell_r.pdf}%
\end{picture}%
%
%  Created by WinFIG version 2024.2 
%  METADATA <version>1.0</version> 
%
\setlength{\unitlength}{3947sp}%
\begin{picture}(14847,3647)(896,-2858)
%  METADATA <id>99</id> 
\put(3571,-251){\makebox(0,0)[lb]{\smash{\fontsize{16}{19.2}\usefont{T1}{ptm}{m}{n}{\color[rgb]{0,0,0}$r$}%
}}}
%  METADATA <id>92</id> 
\put(5146,-891){\makebox(0,0)[lb]{\smash{\fontsize{16}{19.2}\usefont{T1}{ptm}{m}{n}{\color[rgb]{0,0,0}$1$}%
}}}
%  METADATA <id>77</id> 
\put(9178,-702){\makebox(0,0)[lb]{\smash{\fontsize{16}{19.2}\usefont{T1}{ptm}{m}{n}{\color[rgb]{0,0,0}$1$}%
}}}
%  METADATA <id>80</id> 
\put(8538,-1352){\makebox(0,0)[lb]{\smash{\fontsize{16}{19.2}\usefont{T1}{ptm}{m}{n}{\color[rgb]{0,0,0}$1$}%
}}}
%  METADATA <id>86</id> 
\put(13163,-1184){\makebox(0,0)[lb]{\smash{\fontsize{16}{19.2}\usefont{T1}{ptm}{m}{n}{\color[rgb]{0,0,0}$1$}%
}}}
%  METADATA <id>89</id> 
\put(12543,-1804){\makebox(0,0)[lb]{\smash{\fontsize{16}{19.2}\usefont{T1}{ptm}{m}{n}{\color[rgb]{0,0,0}$1$}%
}}}
%  METADATA <id>110</id> 
\put(15728,-699){\makebox(0,0)[lb]{\smash{\fontsize{16}{19.2}\usefont{T1}{ptm}{m}{n}{\color[rgb]{0,0,0}$r+b-1$}%
}}}
%  METADATA <id>58</id> 
\put(911,-1411){\makebox(0,0)[lb]{\smash{\fontsize{16}{19.2}\usefont{T1}{ptm}{m}{n}{\color[rgb]{0,0,0}$b$}%
}}}
%  METADATA <id>59</id> 
\put(1511,-291){\makebox(0,0)[lb]{\smash{\fontsize{16}{19.2}\usefont{T1}{ptm}{m}{n}{\color[rgb]{0,0,0}$w$}%
}}}
%  METADATA <id>104</id> 
\put(8311,-261){\makebox(0,0)[lb]{\smash{\fontsize{16}{19.2}\usefont{T1}{ptm}{m}{n}{\color[rgb]{0,0,0}$r$}%
}}}
%  METADATA <id>107</id> 
\put(11701,-441){\makebox(0,0)[lb]{\smash{\fontsize{16}{19.2}\usefont{T1}{ptm}{m}{n}{\color[rgb]{0,0,0}$r+1$}%
}}}
%  METADATA <id>75</id> 
\put(5756,-301){\makebox(0,0)[lb]{\smash{\fontsize{16}{19.2}\usefont{T1}{ptm}{m}{n}{\color[rgb]{0,0,0}$w+1$}%
}}}
%  METADATA <id>112</id> 
\put(11960,-1364){\makebox(0,0)[lb]{\smash{\fontsize{36}{43.2}\usefont{T1}{ptm}{m}{n}{\color[rgb]{0,0,0}$\ldots$}%
}}}
\end{picture}%

%% file: rcell_r.pdftex_t
\begin{picture}(0,0)%
\includegraphics{rcell_r.pdf}%
\end{picture}%
%
%  Created by WinFIG version 2024.2 
%  METADATA <version>1.0</version> 
%
\setlength{\unitlength}{2763sp}%
\begin{picture}(14544,3636)(896,-2870)
%  METADATA <id>80</id> 
\put(8513,-1401){\makebox(0,0)[lb]{\smash{\fontsize{11}{13.2}\usefont{T1}{ptm}{m}{n}{\color[rgb]{0,0,0}$1$}%
}}}
%  METADATA <id>107</id> 
\put(5181,-916){\makebox(0,0)[lb]{\smash{\fontsize{11}{13.2}\usefont{T1}{ptm}{m}{n}{\color[rgb]{0,0,0}$1$}%
}}}
%  METADATA <id>89</id> 
\put(12686,-1846){\makebox(0,0)[lb]{\smash{\fontsize{11}{13.2}\usefont{T1}{ptm}{m}{n}{\color[rgb]{0,0,0}$1$}%
}}}
%  METADATA <id>58</id> 
\put(911,-1411){\makebox(0,0)[lb]{\smash{\fontsize{11}{13.2}\usefont{T1}{ptm}{m}{n}{\color[rgb]{0,0,0}$b$}%
}}}
%  METADATA <id>59</id> 
\put(1511,-291){\makebox(0,0)[lb]{\smash{\fontsize{11}{13.2}\usefont{T1}{ptm}{m}{n}{\color[rgb]{0,0,0}$w$}%
}}}
%  METADATA <id>111</id> 
\put(3631,-241){\makebox(0,0)[lb]{\smash{\fontsize{11}{13.2}\usefont{T1}{ptm}{m}{n}{\color[rgb]{0,0,0}$r$}%
}}}
%  METADATA <id>104</id> 
\put(5814,-311){\makebox(0,0)[lb]{\smash{\fontsize{11}{13.2}\usefont{T1}{ptm}{m}{n}{\color[rgb]{0,0,0}$w$}%
}}}
%  METADATA <id>112</id> 
\put(11851,-1591){\makebox(0,0)[lb]{\smash{\fontsize{25}{30}\usefont{T1}{ptm}{m}{n}{\color[rgb]{0,0,0}$\ldots$}%
}}}
\end{picture}%

%% file: rowcentered_r.pdftex_t
\begin{picture}(0,0)%
\includegraphics{rowcentered_r.pdf}%
\end{picture}%
%
%  Created by WinFIG version 2024.2 
%  METADATA <version>1.0</version> 
%
\setlength{\unitlength}{3158sp}%
\begin{picture}(7034,4074)(1329,-3345)
%  METADATA <id>40</id> 
\put(3941,-2051){\makebox(0,0)[lb]{\smash{\fontsize{13}{15.6}\usefont{T1}{ptm}{m}{n}{\color[rgb]{0,0,0}$b$}%
}}}
%  METADATA <id>39</id> 
\put(6058,-772){\makebox(0,0)[lb]{\smash{\fontsize{13}{15.6}\usefont{T1}{ptm}{m}{n}{\color[rgb]{0,0,0}$w$}%
}}}
%  METADATA <id>41</id> 
\put(8348,-2082){\makebox(0,0)[lb]{\smash{\fontsize{13}{15.6}\usefont{T1}{ptm}{m}{n}{\color[rgb]{0,0,0}$b+1$}%
}}}
%  METADATA <id>46</id> 
\put(3934,-978){\makebox(0,0)[lb]{\smash{\fontsize{13}{15.6}\usefont{T1}{ptm}{m}{n}{\color[rgb]{0,0,0}$r$}%
}}}
%  METADATA <id>50</id> 
\put(8321,-543){\makebox(0,0)[lb]{\smash{\fontsize{13}{15.6}\usefont{T1}{ptm}{m}{n}{\color[rgb]{0,0,0}$r$}%
}}}
%  METADATA <id>38</id> 
\put(1701,-1261){\makebox(0,0)[lb]{\smash{\fontsize{13}{15.6}\usefont{T1}{ptm}{m}{n}{\color[rgb]{0,0,0}$w$}%
}}}
\end{picture}%

%% file: shift_op_r.pdftex_t
\begin{picture}(0,0)%
\includegraphics{shift_op_r.pdf}%
\end{picture}%
%
%  Created by WinFIG version 2024.2 
%  METADATA <version>1.0</version> 
%
\setlength{\unitlength}{3947sp}%
\begin{picture}(19789,6340)(458,-9598)
%  METADATA <id>168</id> 
\put(12988,-4648){\makebox(0,0)[lb]{\smash{\fontsize{36}{43.2}\usefont{T1}{ptm}{m}{n}{\color[rgb]{0,0,0}$\ldots$}%
}}}
%  METADATA <id>169</id> 
\put(9701,-8448){\makebox(0,0)[lb]{\smash{\fontsize{36}{43.2}\usefont{T1}{ptm}{m}{n}{\color[rgb]{0,0,0}$\ldots$}%
}}}
%  METADATA <id>83</id> 
\put(473,-3885){\makebox(0,0)[lb]{\smash{\fontsize{26}{31.2}\usefont{T1}{ptm}{m}{n}{\color[rgb]{0,0,0}(a)}%
}}}
%  METADATA <id>109</id> 
\put(6074,-4198){\makebox(0,0)[lb]{\smash{\fontsize{18}{21.6}\usefont{T1}{ptm}{m}{n}{\color[rgb]{0,0,0}$b$}%
}}}
%  METADATA <id>110</id> 
\put(7516,-3616){\makebox(0,0)[lb]{\smash{\fontsize{18}{21.6}\usefont{T1}{ptm}{m}{n}{\color[rgb]{0,0,0}$w-1$}%
}}}
%  METADATA <id>119</id> 
\put(10366,-3661){\makebox(0,0)[lb]{\smash{\fontsize{18}{21.6}\usefont{T1}{ptm}{m}{n}{\color[rgb]{0,0,0}$w-2$}%
}}}
%  METADATA <id>20</id> 
\put(6070,-8422){\makebox(0,0)[lb]{\smash{\fontsize{18}{21.6}\usefont{T1}{ptm}{m}{n}{\color[rgb]{0,0,0}$b$}%
}}}
%  METADATA <id>21</id> 
\put(7277,-7832){\makebox(0,0)[lb]{\smash{\fontsize{18}{21.6}\usefont{T1}{ptm}{m}{n}{\color[rgb]{0,0,0}$w$}%
}}}
%  METADATA <id>82</id> 
\put(473,-7644){\makebox(0,0)[lb]{\smash{\fontsize{26}{31.2}\usefont{T1}{ptm}{m}{n}{\color[rgb]{0,0,0}(b)}%
}}}
%  METADATA <id>8</id> 
\put(1324,-8400){\makebox(0,0)[lb]{\smash{\fontsize{18}{21.6}\usefont{T1}{ptm}{m}{n}{\color[rgb]{0,0,0}$b$}%
}}}
%  METADATA <id>9</id> 
\put(2552,-7739){\makebox(0,0)[lb]{\smash{\fontsize{18}{21.6}\usefont{T1}{ptm}{m}{n}{\color[rgb]{0,0,0}$w$}%
}}}
%  METADATA <id>162</id> 
\put(4624,-8040){\makebox(0,0)[lb]{\smash{\fontsize{18}{21.6}\usefont{T1}{ptm}{m}{n}{\color[rgb]{0,0,0}$r$}%
}}}
%  METADATA <id>156</id> 
\put(17684,-3556){\makebox(0,0)[lb]{\smash{\fontsize{18}{21.6}\usefont{T1}{ptm}{m}{n}{\color[rgb]{0,0,0}$1$}%
}}}
%  METADATA <id>128</id> 
\put(14549,-3586){\makebox(0,0)[lb]{\smash{\fontsize{18}{21.6}\usefont{T1}{ptm}{m}{n}{\color[rgb]{0,0,0}$2$}%
}}}
%  METADATA <id>160</id> 
\put(20232,-3808){\makebox(0,0)[lb]{\smash{\fontsize{18}{21.6}\usefont{T1}{ptm}{m}{n}{\color[rgb]{0,0,0}$r=1$}%
}}}
%  METADATA <id>31</id> 
\put(11252,-7863){\makebox(0,0)[lb]{\smash{\fontsize{18}{21.6}\usefont{T1}{ptm}{m}{n}{\color[rgb]{0,0,0}$2$}%
}}}
%  METADATA <id>51</id> 
\put(14276,-7863){\makebox(0,0)[lb]{\smash{\fontsize{18}{21.6}\usefont{T1}{ptm}{m}{n}{\color[rgb]{0,0,0}$1$}%
}}}
%  METADATA <id>165</id> 
\put(16976,-7845){\makebox(0,0)[lb]{\smash{\fontsize{18}{21.6}\usefont{T1}{ptm}{m}{n}{\color[rgb]{0,0,0}$r+1$}%
}}}
%  METADATA <id>92</id> 
\put(1331,-4154){\makebox(0,0)[lb]{\smash{\fontsize{18}{21.6}\usefont{T1}{ptm}{m}{n}{\color[rgb]{0,0,0}$b$}%
}}}
%  METADATA <id>93</id> 
\put(2930,-3507){\makebox(0,0)[lb]{\smash{\fontsize{18}{21.6}\usefont{T1}{ptm}{m}{n}{\color[rgb]{0,0,0}$w$}%
}}}
\end{picture}%

%% file: sketch.pdftex_t
\begin{picture}(0,0)%
\includegraphics{sketch.pdf}%
\end{picture}%
%
%  Created by WinFIG version 2024.2 
%  METADATA <version>1.0</version> 
%
\setlength{\unitlength}{3947sp}%
\begin{picture}(3621,2661)(836,-3111)
%  METADATA <id>9</id> 
\put(3214,-814){\makebox(0,0)[lb]{\smash{\fontsize{22}{26.4}\usefont{T1}{ptm}{m}{n}{\color[rgb]{0,0,0}$\alpha$}%
}}}
%  METADATA <id>7</id> 
\put(4442,-1429){\makebox(0,0)[lb]{\smash{\fontsize{22}{26.4}\usefont{T1}{ptm}{m}{n}{\color[rgb]{0,0,0}$B_u$}%
}}}
%  METADATA <id>8</id> 
\put(851,-2374){\makebox(0,0)[lb]{\smash{\fontsize{22}{26.4}\usefont{T1}{ptm}{m}{n}{\color[rgb]{0,0,0}$B_d$}%
}}}
\end{picture}%

%% file: nc.pdftex_t
\begin{picture}(0,0)%
\includegraphics{nc.pdf}%
\end{picture}%
%
%  Created by WinFIG version 2024.2 
%  METADATA <version>1.0</version> 
%
\setlength{\unitlength}{3947sp}%
\begin{picture}(16075,2406)(876,-3917)
%  METADATA <id>9</id> 
\put(891,-2861){\makebox(0,0)[lb]{\smash{\fontsize{16}{19.2}\usefont{T1}{ptm}{m}{n}{\color[rgb]{0,0,0}$b$}%
}}}
%  METADATA <id>13</id> 
\put(1761,-2171){\makebox(0,0)[lb]{\smash{\fontsize{16}{19.2}\usefont{T1}{ptm}{m}{n}{\color[rgb]{0,0,0}$w$}%
}}}
%  METADATA <id>46</id> 
\put(4161,-2121){\makebox(0,0)[lb]{\smash{\fontsize{16}{19.2}\usefont{T1}{ptm}{m}{n}{\color[rgb]{0,0,0}$r$}%
}}}
%  METADATA <id>77</id> 
\put(16936,-2146){\makebox(0,0)[lb]{\smash{\fontsize{16}{19.2}\usefont{T1}{ptm}{m}{n}{\color[rgb]{0,0,0}$r$}%
}}}
%  METADATA <id>57</id> 
\put(8931,-1881){\makebox(0,0)[lb]{\smash{\fontsize{16}{19.2}\usefont{T1}{ptm}{m}{n}{\color[rgb]{0,0,0}$1$}%
}}}
%  METADATA <id>67</id> 
\put(12161,-2321){\makebox(0,0)[lb]{\smash{\fontsize{16}{19.2}\usefont{T1}{ptm}{m}{n}{\color[rgb]{0,0,0}$1$}%
}}}
%  METADATA <id>86</id> 
\put(12556,-2881){\makebox(0,0)[lb]{\smash{\fontsize{36}{43.2}\usefont{T1}{ptm}{m}{n}{\color[rgb]{0,0,0}$\ldots$}%
}}}
\end{picture}%

%% file: ncstar.pdftex_t
\begin{picture}(0,0)%
\includegraphics{ncstar.pdf}%
\end{picture}%
%
%  Created by WinFIG version 2024.2 
%  METADATA <version>1.0</version> 
%
\setlength{\unitlength}{3947sp}%
\begin{picture}(21293,2876)(466,-3907)
%  METADATA <id>125</id> 
\put(21694,-1981){\makebox(0,0)[lb]{\smash{\fontsize{16}{19.2}\usefont{T1}{ptm}{m}{n}{\color[rgb]{0,0,0}$r+1$}%
}}}
%  METADATA <id>77</id> 
\put(3711,-2131){\makebox(0,0)[lb]{\smash{\fontsize{16}{19.2}\usefont{T1}{ptm}{m}{n}{\color[rgb]{0,0,0}$r$}%
}}}
%  METADATA <id>109</id> 
\put(18336,-2123){\makebox(0,0)[lb]{\smash{\fontsize{16}{19.2}\usefont{T1}{ptm}{m}{n}{\color[rgb]{0,0,0}$r$}%
}}}
%  METADATA <id>132</id> 
\put(2888,-1398){\makebox(0,0)[lb]{\smash{\fontsize{20}{24}\usefont{T1}{ptm}{m}{n}{\color[rgb]{0,0,0}$(r)'$}%
}}}
%  METADATA <id>133</id> 
\put(8538,-1448){\makebox(0,0)[lb]{\smash{\fontsize{20}{24}\usefont{T1}{ptm}{m}{n}{\color[rgb]{0,0,0}$(1)'$}%
}}}
%  METADATA <id>134</id> 
\put(12363,-1460){\makebox(0,0)[lb]{\smash{\fontsize{20}{24}\usefont{T1}{ptm}{m}{n}{\color[rgb]{0,0,0}$(1)$}%
}}}
%  METADATA <id>93</id> 
\put(9363,-1873){\makebox(0,0)[lb]{\smash{\fontsize{16}{19.2}\usefont{T1}{ptm}{m}{n}{\color[rgb]{0,0,0}$1$}%
}}}
%  METADATA <id>138</id> 
\put(13426,-2836){\makebox(0,0)[lb]{\smash{\fontsize{36}{43.2}\usefont{T1}{ptm}{m}{n}{\color[rgb]{0,0,0}$\ldots$}%
}}}
%  METADATA <id>135</id> 
\put(17544,-1430){\makebox(0,0)[lb]{\smash{\fontsize{20}{24}\usefont{T1}{ptm}{m}{n}{\color[rgb]{0,0,0}$(r)'$}%
}}}
%  METADATA <id>136</id> 
\put(21744,-1443){\makebox(0,0)[lb]{\smash{\fontsize{20}{24}\usefont{T1}{ptm}{m}{n}{\color[rgb]{0,0,0}$(r+1)'$}%
}}}
%  METADATA <id>101</id> 
\put(13141,-2326){\makebox(0,0)[lb]{\smash{\fontsize{16}{19.2}\usefont{T1}{ptm}{m}{n}{\color[rgb]{0,0,0}$1$}%
}}}
\end{picture}%

%% file: classeC.pdftex_t
\begin{picture}(0,0)%
\includegraphics{classeC.pdf}%
\end{picture}%
%
%  Created by WinFIG version 2024.2 
%  METADATA <version>1.0</version> 
%
\setlength{\unitlength}{3947sp}%
\begin{picture}(5715,1780)(1156,-1656)
%  METADATA <id>120</id> 
\put(1766, -6){\makebox(0,0)[lb]{\smash{\fontsize{7}{8.4}\usefont{T1}{ptm}{m}{it}{\color[rgb]{0,0,0}rectangular}%
}}}
%  METADATA <id>68</id> 
\put(1926,-1581){\makebox(0,0)[lb]{\smash{\fontsize{9}{10.8}\usefont{T1}{ptm}{m}{it}{\color[rgb]{0,0,0}$(C_0)$}%
}}}
%  METADATA <id>119</id> 
\put(3231, -6){\makebox(0,0)[lb]{\smash{\fontsize{7}{8.4}\usefont{T1}{ptm}{m}{it}{\color[rgb]{0,0,0}rectangular}%
}}}
%  METADATA <id>71</id> 
\put(6234,-1586){\makebox(0,0)[lb]{\smash{\fontsize{9}{10.8}\usefont{T1}{ptm}{m}{it}{\color[rgb]{0,0,0}$(C_1)$}%
}}}
%  METADATA <id>105</id> 
\put(5949,-166){\makebox(0,0)[lb]{\smash{\fontsize{7}{8.4}\usefont{T1}{ptm}{m}{it}{\color[rgb]{0,0,0}$w=0$}%
}}}
%  METADATA <id>111</id> 
\put(4838,-1592){\makebox(0,0)[lb]{\smash{\fontsize{9}{10.8}\usefont{T1}{ptm}{m}{it}{\color[rgb]{0,0,0}$(C)$}%
}}}
%  METADATA <id>114</id> 
\put(4416,-501){\makebox(0,0)[lb]{\smash{\fontsize{7}{8.4}\usefont{T1}{ptm}{m}{it}{\color[rgb]{0,0,0}$w>0$}%
}}}
%  METADATA <id>56</id> 
\put(3431,-1596){\makebox(0,0)[lb]{\smash{\fontsize{9}{10.8}\usefont{T1}{ptm}{m}{it}{\color[rgb]{0,0,0}$(C)$}%
}}}
%  METADATA <id>104</id> 
\put(2986,-491){\makebox(0,0)[lb]{\smash{\fontsize{7}{8.4}\usefont{T1}{ptm}{m}{it}{\color[rgb]{0,0,0}$w>0$}%
}}}
%  METADATA <id>93</id> 
\put(1191,-791){\makebox(0,0)[lb]{\smash{\fontsize{7}{8.4}\usefont{T1}{ptm}{m}{it}{\color[rgb]{0,0,0}$b>1$}%
}}}
%  METADATA <id>103</id> 
\put(1751,-526){\makebox(0,0)[lb]{\smash{\fontsize{7}{8.4}\usefont{T1}{ptm}{m}{it}{\color[rgb]{0,0,0}$w=\ell(P)>0$}%
}}}
%  METADATA <id>108</id> 
\put(1171,-991){\makebox(0,0)[lb]{\smash{\fontsize{7}{8.4}\usefont{T1}{ptm}{m}{it}{\color[rgb]{0,0,0}$r=0$}%
}}}
\end{picture}%

%% file: classe_L.pdftex_t
\begin{picture}(0,0)%
\includegraphics{classe_L.pdf}%
\end{picture}%
%
%  Created by WinFIG version 2024.2 
%  METADATA <version>1.0</version> 
%
\setlength{\unitlength}{3947sp}%
\begin{picture}(3419,1276)(607,-1300)
%  METADATA <id>90</id> 
\put(622,-635){\makebox(0,0)[lb]{\smash{\fontsize{8}{9.6}\usefont{T1}{ptm}{m}{it}{\color[rgb]{0,0,0}$w>0$}%
}}}
%  METADATA <id>37</id> 
\put(3404,-1235){\makebox(0,0)[lb]{\smash{\fontsize{10}{12}\usefont{T1}{ptm}{m}{it}{\color[rgb]{0,0,0}$(L)$}%
}}}
%  METADATA <id>38</id> 
\put(632,-449){\makebox(0,0)[lb]{\smash{\fontsize{8}{9.6}\usefont{T1}{ptm}{m}{it}{\color[rgb]{0,0,0}$b=1$}%
}}}
%  METADATA <id>41</id> 
\put(1480,-1230){\makebox(0,0)[lb]{\smash{\fontsize{10}{12}\usefont{T1}{ptm}{m}{it}{\color[rgb]{0,0,0}$(L_0)$}%
}}}
\end{picture}%

%% file: classe_RS.pdftex_t
\begin{picture}(0,0)%
\includegraphics{classe_RS.pdf}%
\end{picture}%
%
%  Created by WinFIG version 2024.2 
%  METADATA <version>1.0</version> 
%
\setlength{\unitlength}{3947sp}%
\begin{picture}(7383,1665)(976,-1319)
%  METADATA <id>134</id> 
\put(8196,-66){\makebox(0,0)[lb]{\smash{\fontsize{6}{7.2}\usefont{T1}{ptm}{m}{it}{\color[rgb]{0,0,0}$r=0$}%
}}}
%  METADATA <id>28</id> 
\put(4758,-1239){\makebox(0,0)[lb]{\smash{\fontsize{10}{12}\usefont{T1}{ptm}{m}{it}{\color[rgb]{0,0,0}$(S)$}%
}}}
%  METADATA <id>88</id> 
\put(4321,-66){\makebox(0,0)[lb]{\smash{\fontsize{6}{7.2}\usefont{T1}{ptm}{m}{it}{\color[rgb]{0,0,0}$w>0$}%
}}}
%  METADATA <id>99</id> 
\put(5411,-76){\makebox(0,0)[lb]{\smash{\fontsize{6}{7.2}\usefont{T1}{ptm}{m}{it}{\color[rgb]{0,0,0}$r>0$}%
}}}
%  METADATA <id>21</id> 
\put(3207,-1248){\makebox(0,0)[lb]{\smash{\fontsize{10}{12}\usefont{T1}{ptm}{m}{it}{\color[rgb]{0,0,0}$(S_0)$}%
}}}
%  METADATA <id>92</id> 
\put(2976,-91){\makebox(0,0)[lb]{\smash{\fontsize{6}{7.2}\usefont{T1}{ptm}{m}{it}{\color[rgb]{0,0,0}$w>0, r=0$}%
}}}
%  METADATA <id>50</id> 
\put(1713,-1254){\makebox(0,0)[lb]{\smash{\fontsize{10}{12}\usefont{T1}{ptm}{m}{it}{\color[rgb]{0,0,0}$(R)$}%
}}}
%  METADATA <id>86</id> 
\put(1326,-76){\makebox(0,0)[lb]{\smash{\fontsize{6}{7.2}\usefont{T1}{ptm}{m}{it}{\color[rgb]{0,0,0}$w=0$}%
}}}
%  METADATA <id>38</id> 
\put(991,-334){\makebox(0,0)[lb]{\smash{\fontsize{6}{7.2}\usefont{T1}{ptm}{m}{it}{\color[rgb]{0,0,0}$b=1$}%
}}}
%  METADATA <id>97</id> 
\put(991,-479){\makebox(0,0)[lb]{\smash{\fontsize{6}{7.2}\usefont{T1}{ptm}{m}{it}{\color[rgb]{0,0,0}$r=0$}%
}}}
%  METADATA <id>104</id> 
\put(6303,-1234){\makebox(0,0)[lb]{\smash{\fontsize{10}{12}\usefont{T1}{ptm}{m}{it}{\color[rgb]{0,0,0}$(S)$}%
}}}
%  METADATA <id>112</id> 
\put(6971,-71){\makebox(0,0)[lb]{\smash{\fontsize{6}{7.2}\usefont{T1}{ptm}{m}{it}{\color[rgb]{0,0,0}$r>0$}%
}}}
%  METADATA <id>127</id> 
\put(7733,-1229){\makebox(0,0)[lb]{\smash{\fontsize{10}{12}\usefont{T1}{ptm}{m}{it}{\color[rgb]{0,0,0}$(S)$}%
}}}
\end{picture}%

%% file: growth_classCr.pdftex_t
\begin{picture}(0,0)%
\includegraphics{growth_classCr.pdf}%
\end{picture}%
%
%  Created by WinFIG version 2024.2 
%  METADATA <version>1.0</version> 
%
\setlength{\unitlength}{3947sp}%
\begin{picture}(10701,6957)(232,-6349)
%  METADATA <id>82</id> 
\put(7834,-652){\makebox(0,0)[lb]{\smash{\fontsize{12}{14.4}\usefont{T1}{ptm}{m}{it}{\color[rgb]{0,0,0}$1$}%
}}}
%  METADATA <id>74</id> 
\put(7974,-1766){\makebox(0,0)[lb]{\smash{\fontsize{12}{14.4}\usefont{T1}{ptm}{m}{it}{\color[rgb]{0,0,0}$(1,1,r+b-1)'_L$}%
}}}
%  METADATA <id>79</id> 
\put(8033,-3949){\makebox(0,0)[lb]{\smash{\fontsize{12}{14.4}\usefont{T1}{ptm}{m}{it}{\color[rgb]{0,0,0}$(1,0,0)_R$}%
}}}
%  METADATA <id>118</id> 
\put(9846,-6274){\makebox(0,0)[lb]{\smash{\fontsize{12}{14.4}\usefont{T1}{ptm}{m}{it}{\color[rgb]{0,0,0}$(r)'$}%
}}}
%  METADATA <id>69</id> 
\put(1429,-1774){\makebox(0,0)[lb]{\smash{\fontsize{12}{14.4}\usefont{T1}{ptm}{m}{it}{\color[rgb]{0,0,0}$(b,w,r)'_C$}%
}}}
%  METADATA <id>71</id> 
\put(3484,-132){\makebox(0,0)[lb]{\smash{\fontsize{12}{14.4}\usefont{T1}{ptm}{m}{it}{\color[rgb]{0,0,0}$w+1$}%
}}}
%  METADATA <id>77</id> 
\put(3744,-3939){\makebox(0,0)[lb]{\smash{\fontsize{12}{14.4}\usefont{T1}{ptm}{m}{it}{\color[rgb]{0,0,0}$(1,w,0)_S$}%
}}}
%  METADATA <id>81</id> 
\put(5262,-402){\makebox(0,0)[lb]{\smash{\fontsize{12}{14.4}\usefont{T1}{ptm}{m}{it}{\color[rgb]{0,0,0}$1$}%
}}}
%  METADATA <id>83</id> 
\put(3747,-2262){\makebox(0,0)[lb]{\smash{\fontsize{12}{14.4}\usefont{T1}{ptm}{m}{it}{\color[rgb]{0,0,0}$w$}%
}}}
%  METADATA <id>85</id> 
\put(3717,-4370){\makebox(0,0)[lb]{\smash{\fontsize{12}{14.4}\usefont{T1}{ptm}{m}{it}{\color[rgb]{0,0,0}$w$}%
}}}
%  METADATA <id>86</id> 
\put(3019,-5194){\makebox(0,0)[lb]{\smash{\fontsize{12}{14.4}\usefont{T1}{ptm}{m}{it}{\color[rgb]{0,0,0}$b+1$}%
}}}
%  METADATA <id>87</id> 
\put(3661,-6249){\makebox(0,0)[lb]{\smash{\fontsize{12}{14.4}\usefont{T1}{ptm}{m}{it}{\color[rgb]{0,0,0}$(b+1,w,r)'_C$}%
}}}
%  METADATA <id>6</id> 
\put(934,-784){\makebox(0,0)[lb]{\smash{\fontsize{12}{14.4}\usefont{T1}{ptm}{m}{it}{\color[rgb]{0,0,0}$b$}%
}}}
%  METADATA <id>7</id> 
\put(1351,-136){\makebox(0,0)[lb]{\smash{\fontsize{12}{14.4}\usefont{T1}{ptm}{m}{it}{\color[rgb]{0,0,0}$w$}%
}}}
%  METADATA <id>70</id> 
\put(3631,-1764){\makebox(0,0)[lb]{\smash{\fontsize{12}{14.4}\usefont{T1}{ptm}{m}{it}{\color[rgb]{0,0,0}$(1,w+1,r)'_L$}%
}}}
%  METADATA <id>73</id> 
\put(5551,-1764){\makebox(0,0)[lb]{\smash{\fontsize{12}{14.4}\usefont{T1}{ptm}{m}{it}{\color[rgb]{0,0,0}$(1,1,r+1)'_L$}%
}}}
%  METADATA <id>78</id> 
\put(5731,-3946){\makebox(0,0)[lb]{\smash{\fontsize{12}{14.4}\usefont{T1}{ptm}{m}{it}{\color[rgb]{0,0,0}$(1,0,0)_R$}%
}}}
%  METADATA <id>106</id> 
\put(5803,-6274){\makebox(0,0)[lb]{\smash{\fontsize{12}{14.4}\usefont{T1}{ptm}{m}{it}{\color[rgb]{0,0,0}$(1)'$}%
}}}
%  METADATA <id>112</id> 
\put(7686,-6274){\makebox(0,0)[lb]{\smash{\fontsize{12}{14.4}\usefont{T1}{ptm}{m}{it}{\color[rgb]{0,0,0}$(1)$}%
}}}
%  METADATA <id>121</id> 
\put(7029,-849){\makebox(0,0)[lb]{\smash{\fontsize{20}{24}\usefont{T1}{ptm}{m}{n}{\color[rgb]{0,0,0}$\ldots$}%
}}}
%  METADATA <id>123</id> 
\put(7096,-2814){\makebox(0,0)[lb]{\smash{\fontsize{20}{24}\usefont{T1}{ptm}{m}{n}{\color[rgb]{0,0,0}$\ldots$}%
}}}
%  METADATA <id>125</id> 
\put(8814,-5236){\makebox(0,0)[lb]{\smash{\fontsize{20}{24}\usefont{T1}{ptm}{m}{n}{\color[rgb]{0,0,0}$\ldots$}%
}}}
%  METADATA <id>128</id> 
\put(2611,-189){\makebox(0,0)[lb]{\smash{\fontsize{12}{14.4}\usefont{T1}{ptm}{m}{it}{\color[rgb]{0,0,0}$r$}%
}}}
\end{picture}%

%% file: growth_classLr.pdftex_t
\begin{picture}(0,0)%
\includegraphics{growth_classLr.pdf}%
\end{picture}%
%
%  Created by WinFIG version 2024.2 
%  METADATA <version>1.0</version> 
%
\setlength{\unitlength}{3947sp}%
\begin{picture}(8251,3412)(929,-2806)
%  METADATA <id>159</id> 
\put(7096,-1911){\makebox(0,0)[lb]{\smash{\fontsize{12}{14.4}\usefont{T1}{ptm}{m}{it}{\color[rgb]{0,0,0}$\ldots$}%
}}}
%  METADATA <id>144</id> 
\put(6794,293){\makebox(0,0)[lb]{\smash{\fontsize{8}{9.6}\usefont{T1}{ptm}{m}{it}{\color[rgb]{0,0,0}$r$}%
}}}
%  METADATA <id>152</id> 
\put(4919, 63){\makebox(0,0)[lb]{\smash{\fontsize{8}{9.6}\usefont{T1}{ptm}{m}{it}{\color[rgb]{0,0,0}$r$}%
}}}
%  METADATA <id>155</id> 
\put(9165,-1579){\makebox(0,0)[lb]{\smash{\fontsize{8}{9.6}\usefont{T1}{ptm}{m}{it}{\color[rgb]{0,0,0}$r$}%
}}}
%  METADATA <id>69</id> 
\put(1191,-1056){\makebox(0,0)[lb]{\smash{\fontsize{12}{14.4}\usefont{T1}{ptm}{m}{it}{\color[rgb]{0,0,0}$(1,w,r)'_L$}%
}}}
%  METADATA <id>7</id> 
\put(976, 94){\makebox(0,0)[lb]{\smash{\fontsize{8}{9.6}\usefont{T1}{ptm}{m}{it}{\color[rgb]{0,0,0}$w>0$}%
}}}
%  METADATA <id>117</id> 
\put(4106,-2721){\makebox(0,0)[lb]{\smash{\fontsize{12}{14.4}\usefont{T1}{ptm}{m}{it}{\color[rgb]{0,0,0}$(1)'$}%
}}}
%  METADATA <id>124</id> 
\put(5996,-2731){\makebox(0,0)[lb]{\smash{\fontsize{12}{14.4}\usefont{T1}{ptm}{m}{it}{\color[rgb]{0,0,0}$(1)$}%
}}}
%  METADATA <id>138</id> 
\put(2319, 38){\makebox(0,0)[lb]{\smash{\fontsize{8}{9.6}\usefont{T1}{ptm}{m}{it}{\color[rgb]{0,0,0}$r$}%
}}}
%  METADATA <id>131</id> 
\put(8167,-2708){\makebox(0,0)[lb]{\smash{\fontsize{12}{14.4}\usefont{T1}{ptm}{m}{it}{\color[rgb]{0,0,0}$(r)'$}%
}}}
%  METADATA <id>92</id> 
\put(3556,-1071){\makebox(0,0)[lb]{\smash{\fontsize{12}{14.4}\usefont{T1}{ptm}{m}{it}{\color[rgb]{0,0,0}$(1,w+1,r)'_L$}%
}}}
%  METADATA <id>97</id> 
\put(3516, 44){\makebox(0,0)[lb]{\smash{\fontsize{8}{9.6}\usefont{T1}{ptm}{m}{it}{\color[rgb]{0,0,0}$w+1$}%
}}}
%  METADATA <id>99</id> 
\put(5840,-1066){\makebox(0,0)[lb]{\smash{\fontsize{12}{14.4}\usefont{T1}{ptm}{m}{it}{\color[rgb]{0,0,0}$(2,w,r)'_C$}%
}}}
%  METADATA <id>104</id> 
\put(5589,338){\makebox(0,0)[lb]{\smash{\fontsize{8}{9.6}\usefont{T1}{ptm}{m}{it}{\color[rgb]{0,0,0}$w$}%
}}}
\end{picture}%

%% file: growth_classRr.pdftex_t
\begin{picture}(0,0)%
\includegraphics{growth_classRr.pdf}%
\end{picture}%
%
%  Created by WinFIG version 2024.2 
%  METADATA <version>1.0</version> 
%
\setlength{\unitlength}{3947sp}%
\begin{picture}(7431,1877)(393,-1149)
%  METADATA <id>122</id> 
\put(5184,-1066){\makebox(0,0)[lb]{\smash{\fontsize{12}{14.4}\usefont{T1}{ptm}{m}{it}{\color[rgb]{0,0,0}$(1,1,0)_R$}%
}}}
%  METADATA <id>7</id> 
\put(408,-72){\makebox(0,0)[lb]{\smash{\fontsize{10}{12}\usefont{T1}{ptm}{m}{it}{\color[rgb]{0,0,0}$w=0$}%
}}}
%  METADATA <id>116</id> 
\put(3451,-1074){\makebox(0,0)[lb]{\smash{\fontsize{12}{14.4}\usefont{T1}{ptm}{m}{it}{\color[rgb]{0,0,0}$(1,1,0)_L$}%
}}}
%  METADATA <id>128</id> 
\put(6886,-1066){\makebox(0,0)[lb]{\smash{\fontsize{12}{14.4}\usefont{T1}{ptm}{m}{it}{\color[rgb]{0,0,0}$(2,0,0)_{C_1}$}%
}}}
%  METADATA <id>69</id> 
\put(1149,-1066){\makebox(0,0)[lb]{\smash{\fontsize{12}{14.4}\usefont{T1}{ptm}{m}{it}{\color[rgb]{0,0,0}$(1,0,0)_R$}%
}}}
%  METADATA <id>137</id> 
\put(2781,-45){\makebox(0,0)[lb]{\smash{\fontsize{10}{12}\usefont{T1}{ptm}{m}{it}{\color[rgb]{0,0,0}$w=1$}%
}}}
\end{picture}%

%% file: growth_classSr.pdftex_t
\begin{picture}(0,0)%
\includegraphics{growth_classSr.pdf}%
\end{picture}%
%
%  Created by WinFIG version 2024.2 
%  METADATA <version>1.0</version> 
%
\setlength{\unitlength}{3947sp}%
\begin{picture}(7657,5979)(937,-5368)
%  METADATA <id>242</id> 
\put(6676,-4314){\makebox(0,0)[lb]{\smash{\fontsize{12}{14.4}\usefont{T1}{ptm}{m}{it}{\color[rgb]{0,0,0}$\ldots$}%
}}}
%  METADATA <id>229</id> 
\put(7757,-5293){\makebox(0,0)[lb]{\smash{\fontsize{12}{14.4}\usefont{T1}{ptm}{m}{it}{\color[rgb]{0,0,0}$(r)'$}%
}}}
%  METADATA <id>164</id> 
\put(7099,-1273){\makebox(0,0)[lb]{\smash{\fontsize{12}{14.4}\usefont{T1}{ptm}{m}{it}{\color[rgb]{0,0,0}$(2,w,r)'_C$}%
}}}
%  METADATA <id>145</id> 
\put(3215, 73){\makebox(0,0)[lb]{\smash{\fontsize{10}{12}\usefont{T1}{ptm}{m}{it}{\color[rgb]{0,0,0}$w+1$}%
}}}
%  METADATA <id>146</id> 
\put(3325,-1267){\makebox(0,0)[lb]{\smash{\fontsize{12}{14.4}\usefont{T1}{ptm}{m}{it}{\color[rgb]{0,0,0}$(1,w+1,r)'_L$}%
}}}
%  METADATA <id>177</id> 
\put(3409,-3403){\makebox(0,0)[lb]{\smash{\fontsize{12}{14.4}\usefont{T1}{ptm}{m}{it}{\color[rgb]{0,0,0}$(1,w,r+1)'_S$}%
}}}
%  METADATA <id>186</id> 
\put(5131,-3403){\makebox(0,0)[lb]{\smash{\fontsize{12}{14.4}\usefont{T1}{ptm}{m}{it}{\color[rgb]{0,0,0}$(1,1,r+1)'_S$}%
}}}
%  METADATA <id>176</id> 
\put(3467,-2059){\makebox(0,0)[lb]{\smash{\fontsize{10}{12}\usefont{T1}{ptm}{m}{it}{\color[rgb]{0,0,0}$w$}%
}}}
%  METADATA <id>185</id> 
\put(4994,-2075){\makebox(0,0)[lb]{\smash{\fontsize{10}{12}\usefont{T1}{ptm}{m}{it}{\color[rgb]{0,0,0}$1$}%
}}}
%  METADATA <id>154</id> 
\put(5129, 90){\makebox(0,0)[lb]{\smash{\fontsize{10}{12}\usefont{T1}{ptm}{m}{it}{\color[rgb]{0,0,0}$w$}%
}}}
%  METADATA <id>155</id> 
\put(5197,-1273){\makebox(0,0)[lb]{\smash{\fontsize{12}{14.4}\usefont{T1}{ptm}{m}{it}{\color[rgb]{0,0,0}$(1,w,0)_S$}%
}}}
%  METADATA <id>163</id> 
\put(7031,320){\makebox(0,0)[lb]{\smash{\fontsize{10}{12}\usefont{T1}{ptm}{m}{it}{\color[rgb]{0,0,0}$w$}%
}}}
%  METADATA <id>7</id> 
\put(1115, 84){\makebox(0,0)[lb]{\smash{\fontsize{10}{12}\usefont{T1}{ptm}{m}{it}{\color[rgb]{0,0,0}$w$}%
}}}
%  METADATA <id>69</id> 
\put(1177,-1267){\makebox(0,0)[lb]{\smash{\fontsize{12}{14.4}\usefont{T1}{ptm}{m}{it}{\color[rgb]{0,0,0}$(1,w,r)'_S$}%
}}}
%  METADATA <id>240</id> 
\put(2285, 48){\makebox(0,0)[lb]{\smash{\fontsize{10}{12}\usefont{T1}{ptm}{m}{it}{\color[rgb]{0,0,0}$r$}%
}}}
%  METADATA <id>213</id> 
\put(3865,-5287){\makebox(0,0)[lb]{\smash{\fontsize{12}{14.4}\usefont{T1}{ptm}{m}{it}{\color[rgb]{0,0,0}$(1)'$}%
}}}
%  METADATA <id>221</id> 
\put(5515,-5293){\makebox(0,0)[lb]{\smash{\fontsize{12}{14.4}\usefont{T1}{ptm}{m}{it}{\color[rgb]{0,0,0}$(1)$}%
}}}
\end{picture}%